\documentclass[a4paper,11pt,reqno,noindent]{amsart}
\usepackage[centertags]{amsmath}
\usepackage{amsfonts,amssymb,amsthm,dsfont,cases,amscd,esint,enumerate}
\usepackage[T1]{fontenc}
\usepackage[english]{babel}
\usepackage[applemac]{inputenc}
\usepackage{newlfont}
\usepackage{color}
\usepackage[body={15cm,21.5cm},centering]{geometry} 
\usepackage{fancyhdr}
\pagestyle{fancy}
\fancyhf{}

\fancyhead[RO,LE]{\footnotesize\thepage}
\fancyhead[LO]{\scriptsize\rightmark}
\fancyhead[RE]{\scriptsize\leftmark}

\setlength{\headheight}{12pt}  
\setlength{\headsep}{25pt} 
\usepackage{enumerate}

\theoremstyle{plain}
\newtheorem{theor0}{Theorem}[section]
\newenvironment{theor}
  {\pushQED{\qed}\begin{theor0}}
  {\popQED\end{theor0}}
\newtheorem{lem0}[theor0]{Lemma}
\newenvironment{lem}
  {\pushQED{\qed}\begin{lem0}}
  {\popQED\end{lem0}}
\newtheorem{prop0}[theor0]{Proposition}
\newenvironment{prop}
  {\pushQED{\qed}\begin{prop0}}
  {\popQED\end{prop0}}
\newtheorem{cor0}[theor0]{Corollary}

\newtheorem{hyp0}[theor0]{Hypothesis}
\newenvironment{hyp}
  {\pushQED{\qed}\begin{hyp0}}
  {\popQED\end{hyp0}}
\newtheorem{defin0}[theor0]{Definition}
\newenvironment{defin}
  {\pushQED{\qed}\begin{defin0}}
  {\popQED\end{defin0}}

\numberwithin{equation}{section}

\newcommand{\e}{\varepsilon}

\newcommand{\Pc}{\mathcal{P}}

\newcommand{\calS}{\mathcal S}

\newcommand{\calT}{\mathcal T}

\newcommand{\df}{\mathfrak d}
\newcommand{\dist}{\operatorname{dist}}

\newcommand{\R}{\mathbb R}
\newcommand{\Z}{\mathbb Z}

\newcommand{\Ic}{\mathcal I}

\newcommand{\Md}{\mathbb M}

\newcommand{\Nc}{\mathcal N}
\newcommand{\cvf}{\rightharpoonup}
\newcommand{\loc}{{\operatorname{loc}}}
\newcommand{\Id}{\operatorname{Id}}

\newcommand{\E}{\mathbb{E}}
\newcommand{\per}{{\operatorname{per}}}
\newcommand{\D}{\operatorname{D}}
\newcommand{\ee}{e}
\newcommand{\supess}{\operatorname{sup\,ess}}

\newcommand{\infess}{\operatorname{inf\,ess}}

\newcommand{\bb}{\bar{\boldsymbol b}}
\newcommand{\Bb}{\bar{\boldsymbol B}}
\newcommand{\Ff}{\bar{\boldsymbol F}}
\newcommand{\Cc}{\bar{\boldsymbol C}}
\newcommand{\cc}{\bar{\boldsymbol c}}

\newcommand{\Ld}{\operatorname{L}}
\newcommand{\Div}{{\operatorname{div}}}
\newcommand{\Sym}{{\operatorname{sym}}}
\newcommand{\Skew}{{\operatorname{skew}}}

\newcommand{\tot}{{\operatorname{tot}}}
\newcommand{\act}{{\operatorname{act}}}
\newcommand{\pas}{{\operatorname{pas}}}
\newcommand{\step}[1]{\noindent \textit{Step} #1.}
\newcommand{\substep}[1]{\noindent \textit{Substep} #1.}
\newcommand{\Pm}{\mathbb{P}}
\newcommand{\expec}[1]{\mathbb{E}\left[ #1 \right]}
\newcommand{\expecm}[1]{\mathbb{E}\big[ #1 \big]}
\newcommand{\expecM}[1]{\mathbb{E}\bigg[ #1 \bigg]}
\newcommand{\var}[1]{\mathrm{Var}\left[#1\right]}
\newcommand{\Ups}{\Upsilon}

\usepackage[colorlinks,citecolor=black,urlcolor=black]{hyperref}

\title[Homogenization for active suspensions]{Homogenization of active suspensions\\and reduction of effective viscosity}

\author[A. Bernou]{Armand Bernou}
\address[Armand Bernou]{Sorbonne Universit\'e, CNRS, Universit\'e de Paris, Laboratoire Jacques-Louis Lions, 75005~Paris, France}
\email{armand.bernou@sorbonne-universite.fr}
\author[M. Duerinckx]{Mitia Duerinckx}
\address[Mitia Duerinckx]{Universit\'e Libre de Bruxelles, D\'epartement de Math\'ematique, 1050~Brussels, Belgium}
\email{mitia.duerinckx@ulb.be}
\author[A. Gloria]{Antoine Gloria}
\address[Antoine Gloria]{Sorbonne Universit\'e, CNRS, Universit\'e de Paris, Laboratoire Jacques-Louis Lions, 75005~Paris, France \& Institut Universitaire de France \& Universit\'e Libre de Bruxelles, D\'epartement de Math\'ematique, 1050~Brussels, Belgium}
\email{antoine.gloria@sorbonne-universite.fr}

\begin{document}
\begin{abstract}
We consider a suspension of active rigid particles (swimmers) in a steady Stokes flow,
 where particles are distributed according to a stationary ergodic random process, and we study its homogenization in the macroscopic limit.
A key point in the model is that swimmers are allowed to adapt their propulsion to the surrounding fluid deformation:  swimming forces are not prescribed a priori, but are rather obtained through the retroaction of the fluid.
Qualitative homogenization of this nonlinear model requires an unusual proof that crucially relies on a semi-quantitative two-scale analysis. After introducing new correctors that accurately capture spatial oscillations created by swimming forces, we identify the contribution of the activity to the effective viscosity. 
In agreement with the physics literature, an analysis in the dilute regime shows that the activity of the particles can either increase or decrease the effective viscosity (depending on the swimming mechanism), {which differs from the well-known case of passive suspensions.}
\end{abstract}

\maketitle
\tableofcontents

\section{Introduction and main results}
 
\subsection{General overview}
This work is devoted to the large-scale rheology of suspensions of active particles in viscous fluids, where \emph{active} particles are devices that can propel themselves in the fluid (in a direction that can adapt to the surrounding fluid flow itself).
Important examples include suspensions of bacteria~\cite{Sokolov2009}, micro-algae~\cite{Yasa2018}, nanomotors~\cite{Paxton_2004}, etc.  Compared to passive systems, active suspensions exhibit a particularly rich phenomenology, with the experimental observation of pattern formations~\cite{Cisneros_2007} and unsteady whirls and jets~\cite{Mendelson_1999}. Due to this complexity, the response to an external forcing can defy intuition, with rheological measurements displaying in some settings a transition to a superfluid-like behavior~\cite{Clement-15}.
We refer to~\cite{Hatwalne2004,Sokolov-Aranson-09,Rafai2010,Potomkin_2017} for more physical context. {In the present contribution, our main purpose is to establish rigorously, starting from a simple (yet realistic) \emph{microscopic} fluid-particle model, that the presence of active particles in a fluid can indeed drastically reduce its \emph{effective} viscosity.

Microscopic fluid-particle models are challenging to analyze as they involve collective fluid-structure interactions.
More precisely, the fluid should be described by the Navier--Stokes equations outside the particles, while the latter are assumed to be rigid and thus described by their translational and angular velocities. As a first approximation, we naturally assume no-slip boundary conditions at the boundary of the particles, so that the velocity of the fluid coincides with that of the particles at their boundaries.
The fluid flow thus depends on the whole set of particles via boundary conditions, and the propulsion mechanism of active particles further yields forces on the surrounding fluid.
Reciprocally, both the fluid and the propulsion mechanism exert forces on the particles, the dynamics of which is given by the corresponding Newton equations.
Due to the multibody and long-range nature of particle interactions via the fluid flow, this fluid-particle system is highly difficult to understand  --- let alone analyze rigorously --- in the macroscopic limit with a large number of particles.

On large scales, fluid-particle systems are expected to be approximated by multiscale models: On the one hand, the fluid flow would satisfy a macroscopic fluid equation including an additional effective stress due to the presence of the particles and their self-propulsion. On the other hand, this effective stress would depend on the local geometry of the set of particles on small scales, so the macroscopic fluid equation would be coupled to a microscopic evolution equation for the latter. This amounts to a scale separation in the description of the system:
the macroscopic fluid equation is coupled to the local microstructure dynamics.
We emphasize the nonlinear structure of such models: the fluid flow depends on the local microstructure, the evolution of which depends itself on the fluid flow. The resulting flow-induced microstructure leads to possibly nonlinear response to external forces, thus explaining the well-known non-Newtonian behavior of particle suspensions.

As models for the microstructure dynamics are difficult to formulate concretely,
a first simplification popular in physics and in applied mathematics consists in considering the  dilute regime. In that case, particles interact little and can be viewed as approximately isolated, which allows to reduce the multibody nature of the system. In the spirit of Einstein's formula, the effective stress in the macroscopic fluid equations then takes an explicit form that only depends on the local mean-field distribution of particle orientations, and the macroscopic fluid equation is simply coupled to a kinetic equation for the latter.
We refer for instance to~\cite{DoiEdwards88,Saintillan2014,Saintillan_2018} for a review of multiscale models in this vein. The mathematical justification from dilute fluid-particle systems has attracted considerable interest in recent years and has been completed in a few settings~\cite{Hofer-Schubert-21,Hofer-Leocata-Mecherbet-22,Hofer-Mecherbet-Schubert-22,MD-23,Hofer-Schubert-23}. In particular, the recent work~\cite{MD-23} by the second author addresses the full particle dynamics in the (semi)dilute regime and derives rigorously some multiscale Doi-type model in the spirit of~\cite{DoiEdwards88,Saintillan_2018,Saintillan2014}.
In the same spirit, let us also mention the FENE model for suspensions of free polymer chains in a fluid flow~\cite{MR1383323,MR830199,MR2503655,MR2227763}: this model further takes into account the extensibility of the polymers and has been analyzed in~\cite{MR2039220,MR3010381}, but its mathematical justification from a microscopic fluid-particle system is still an open problem.
Yet, despite their success, those dilute multiscale models neglect the spatial correlations of the particles on the microscale, which become important for less dilute suspensions, cf.~\cite{Hofer-Mecherbet-Schubert-22,MD-23}.

In the present work, we take another route and consider a non-dilute regime
that is beyond the reach of mean-field approaches. To simplify the analysis, we neglect the inertia of the fluid, thus considering the steady Stokes equations instead of Navier--Stokes (vanishing Reynolds number), and we further neglect the inertia of the particles.
What remains is the following: given instantaneous positions and orientations of the particles (at microscale $\e \ll 1$), the fluid flow $u_\e$ is given by solving the steady Stokes equations outside the particles, while their positions and orientations are updated as above.
Note that this simplified fluid-particle system keeps its fundamental difficulty, namely the multibody and long-range nature of particle interactions via the fluid flow.
Rather than studying the full dynamics of this system, which seems out of reach beyond dilute regimes,
we further assume that a scale separation holds in the following sense: the large-scale behavior of the fluid is described by an effective flow $(t,x)\mapsto\bar u(t,x)$, while the positions and orientations of the particles at time~$t$ around a point~$x$ are locally given by a realization of some stationary ergodic decorated point process depending on the previous history of the effective flow $\{\bar u(s,\cdot)\}_{s\le t}$ around $x$.
As non-Newtonian effects are mainly due to the collective orientation of the particles~\cite{DoiEdwards88,Saintillan2014,Saintillan_2018}, we focus on the latter and we further neglect memory effects for simplicity, thus rather assuming the following: the positions of the particles at time~$t$ around a point~$x$ are locally given by a realization of some stationary ergodic point process~$\Pc$ (independent of $t,x$), and their orientations are given by some probability distribution depending on the surrounding effective flow $\bar u(t,\cdot)|_{B_\delta(x)}$ --- say in some neighborhood of mesoscopic size~$\delta>0$.
This dependence is assumed to be given as a data, having in mind that it should be related to invariant measures of the particle dynamics under the surrounding effective flow.
We are then left with the following fluid-structure interaction problem: at a given time $t$, the microscopic fluid flow $u_\e$ is described by the steady Stokes equations outside a random ensemble of particles with positions given by~$\Pc$ and with orientations depending on the surrounding effective flow $\bar u(t,\cdot)|_{B_\delta(x)}$.
As the latter is a large-scale approximation of the microscopic solution $u_\e$ itself, we choose to explicitly close the problem by replacing $\bar u(t,x)$ with~$\chi_\delta\ast u_\e(t,x)$ for some convolution kernel $\chi_\delta$ supported in $B_\delta$.
Note that this dependence of particle orientations on the fluid flow makes the problem explicitly nonlinear.
We refer to~\eqref{eq:Stokes-re} below for a precise formulation.

The aim of the present contribution is to analyze this original nonlinear steady-state model in the macroscopic limit of a large number of small particles with fixed density.
We prove a homogenization result in form of the convergence of the microscopic fluid flow to some effective flow $\bar u$, which is compatible with the closure assumption in the sense that $\lim_{\delta \downarrow 0} \lim_{\e \downarrow 0} \chi_\delta\ast u_\e=\bar u$, and we identify the effective fluid equation in terms of some nonlinear effective viscosity.
This result can be seen as a building block for a multiscale modelling of the fluid-particle dynamics beyond the dilute regime, which should ultimately be combined 
with the study of the very challenging question of the nondilute microstructure dynamics (that we do not address in this contribution).
As a corollary, we justify the drastic reduction of the effective viscosity for so-called pusher particles.
In addition, as a sanity check for the model, we justify in the dilute regime an Einstein-type approximation for our nonlinear effective viscosity, see formula~\eqref{eq:altern-Bact-1} below for the active contribution, which coincides with the standard expression for the active elastic stress in dilute multiscale models~\cite{Haines2008,Haines_2009,Ryan-Aranson-11,Potomkin_2017,Saintillan_2018,Saintillan2014,MD-23}.
In a similar spirit, let us also mention that Girodroux-Lavigne recently analyzed in~\cite{Girodroux-Lavigne-22} a steady Stokes model with a dilute suspension of active particles with orientations and swimming forces that are completely prescribed in advance (independently of the fluid flow): this constitutes a linear version of the model analyzed here.

\medskip
The sequel of this introduction is organized as follows:
In Section~\ref{sec:passive}, we recall the steady-state model for a steady Stokes fluid with a suspension of {\it passive} rigid particles, and we state the associated homogenization result previously obtained by the last two authors in that setting~\cite{D21,DG-21a, DG-21,DG-22-review}.
In Section~\ref{sec:model-part}, we introduce our new nonlinear steady-state model for active suspensions in a steady Stokes flow, as inspired in particular by~\cite{Saintillan_2018, Saintillan2014}.
In Section~\ref{sec:heuristic}, we relate this model
to the physics literature.
Section~\ref{sec:main-results} is dedicated to the main results of this paper: the rigorous homogenization of the nonlinear model, and the analysis of the effective rheology in the dilute regime. Last, in Section~\ref{sec:visco-reduc}, based on these results, we investigate the contribution of active particles to the effective viscosity, and rigorously establish that a significant reduction can take place in the case of so-called pusher particles.}

\subsection{Reminder on passive suspensions}\label{sec:passive}

Given an underlying probability space $(\Omega,\Pm)$ (with expectation $\expec{\cdot}$),
let $\{x_n\}_n$ be a random point process on the ambient space $\R^d$,
consider an associated collection of random shapes $\{I_n^\circ\}_n$, where each $I_n^\circ$ is a connected random open subset of the unit ball $B$ centered at the origin (in the sense that $\int_{I_n^\circ}y\,dy=0$), and then define the corresponding inclusions
\[I_n\,:=\,x_n+I_n^\circ.\]
Note that random shapes are not required to be independent of the point process $\{x_n\}_n$. We then consider the random set
\[\Ic\,:=\,\bigcup_n I_n,\]
which we assume to satisfy the following for some $\vartheta>0$.

\begin{hyp}[Particle suspension]\label{hyp:part}$ $
\begin{enumerate}[(a)]
\item \emph{Stationarity and ergodicity:} The random set $\Ic$ is stationary and ergodic.
\item \emph{Uniform $C^2$ regularity:} The random shapes $\{I_n^\circ\}_n$ satisfy interior and exterior ball conditions with radius $\vartheta$ almost surely.
\item \emph{Uniform hardcore condition:} For some $\ell\ge\vartheta$, there holds $(I_n+\ell B)\cap(I_m+\ell B)=\varnothing$ almost surely for all $n\ne m$. We let $\ell$ be the largest such value, that is, half the interparticle distance
\begin{equation}\label{eq:def-ell}
\ell\,:=\,\tfrac12\inf_{n\ne m}\dist(I_n,I_m).\qedhere
\end{equation}
\end{enumerate}
\end{hyp}

Now consider a tank, described as a bounded Lipschitz domain $U\subset\R^d$, and assume that it is filled with a steady Stokes fluid with a suspension of particles of size $\e$, described as the $\e$-rescaling of $\Ic$.
More precisely, we only consider particles that are included in $U$ and remove those close to the boundary:
define $\Nc_\e(U)$ as the set of indices $n$ such that $\e (I_n + \ell B) \subset U$, and set
\[\Ic_\e(U):=\bigcup_{n\in\Nc_\e(U)}\e I_n.\]
 We write $u_\e$ for the fluid velocity, $P_\e$ for the corresponding pressure. We assume Dirichlet conditions $u_\e = 0$ on $\partial U$, and we extend the fluid velocity inside particles with the rigidity constraint
\[ \D(u_\e) :=\tfrac12(\nabla u_\e+(\nabla u_\e)^T)= 0 \qquad \hbox{ in } \Ic_\e(U).\]
Recall the definition of the Cauchy stress tensor
\[ \sigma(u_\e, P_\e) \,:=\, 2 \D(u_\e) - P_\e \Id ,\]
where $\Id$ denotes the identity matrix.
Given an internal force $h$, the fluid velocity $u_\e \in H^1_0(U)^d$ and the associated pressure $P_\e \in\Ld^2(U\setminus\Ic_\e(U))$ are then given as the solutions of the Stokes system
\begin{equation}\label{eq:Stokes-pas}
\left\{\begin{array}{ll}
-\triangle u_\e+\nabla P_\e  = h  &\text{in $U\setminus\Ic_\e(U)$},\\
\Div( u_\e)=0,&\text{in $U\setminus\Ic_\e(U)$},\\
\D( u_\e)=0,&\text{in $\Ic_\e(U)$},\\
\int_{\e\partial I_n}\sigma(u_\e,P_\e)\nu = 0,&\forall n \in \Nc_\e(U),
\\
\int_{\e\partial I_n}\Theta(x-\e x_n)\cdot\sigma(u_\e,P_\e)\nu  = 0, &\forall n \in \Nc_\e(U),\, \Theta \in \Md^\Skew,
\end{array}\right.
\end{equation}
where $\nu$ stands for the outward normal to $\partial I_n$,
where $\Md^\Skew$ is the set of skew-symmetric matrices,
and where we assume the additional anchoring condition 
\[ \int_{U \setminus \Ic_\e(U)} P_\e = 0, \]
which we shall abbreviate as choosing $P_\e \in\Ld^2(U\setminus\Ic_\e(U))/\R$.
The homogenization of the Stokes system~\eqref{eq:Stokes-pas} was the object of \cite{D21,DG-21a, DG-21}, where the last two authors proved that  $(u_\e,P_\e\mathds1_{U\setminus\Ic_\e(U)})$ converges almost surely weakly in 
$H^1_0(U)\times \Ld^2(U)/\R$ to the unique solution $(\bar u, \bar P)$ of the homogenized Stokes system 
\begin{equation}\label{eq:Stokes-pas-hom}
\left\{\begin{array}{ll}
-\Div(2\Bb_\pas\D(\bar u))+\nabla \bar P  =(1-\lambda) f  &\text{in $U$},\\
\Div( \bar u)=0,&\text{in $U$},
\end{array}\right.
\end{equation}
where $\lambda$ stands for the particle volume fraction
\begin{equation}\label{eq:part-vol-frac}
\lambda\,:=\,\expec{\mathds1_{\Ic}},
\end{equation}
and where the effective viscosity~$\Bb_\pas$ is a symmetric linear map on the set $\Md^\Sym_0$ of symmetric trace-free matrices. We recall that the latter satisfies $E : \Bb_\pas E>|E|^2$ for all $E \in \Md^\Sym_0$ as soon as $\lambda>0$, meaning that the presence of (passive) rigid particles always increases the effective viscosity. We refer to~\cite{DG-22-review} for a review of the topic.

\medskip

In view of the quantitative homogenization results that we shall need later, we occasionally make quantitative ergodicity assumptions in form of the validity of the following multiscale variance inequality introduced by the last two authors in~\cite{DG-FI2, DG-FI1}. This assumption holds for instance for hardcore Poisson point process with exponentially decaying $\pi$.

\begin{hyp}[Quantitative mixing assumption]\label{hyp:part-strong}$ $
There exists a non-increasing weight function $\pi:\R^+\to\R^+$ with superalgebraic decay (that is, $\pi(t)\le C_p\langle t\rangle^{-p}$ for all $p<\infty$) such that the random set $\Ic$ satisfies, for all $\sigma(\Ic)$-measurable random variables $Y(\Ic)$,
\[\var{Y(\Ic)}\,\le\,\expecM{\int_0^\infty\int_{\R^d}\Big(\partial_{\Ic,B_t(x)}^{\operatorname{osc}}Y(\Ic)\Big)^2dx\,\langle t\rangle^{-d}\pi(t)\,dt},\]
where the ``oscillation'' $\partial^{\operatorname{osc}}$ of the random variable $Y(\Ic)$ is defined by
\begin{align*}
&\partial^{\operatorname{osc}}_{\Ic,B_t(x)}Y(\Ic)\,:=\,\supess\Big\{Y(\Ic')\,:\,\Ic'\cap(\R^d\setminus B_t(x))=\Ic\cap(\R^d\setminus B_t(x))\Big\}\\
&\hspace{3cm}-\infess\Big\{Y(\Ic')\,:\,\Ic'\cap(\R^d\setminus B_t(x))=\Ic\cap(\R^d\setminus B_t(x))\Big\}.
\qedhere
\end{align*}
\end{hyp}

\subsection{Hydrodynamic model for active suspensions}\label{sec:model-part}
As opposed to \textit{passive} particles, \textit{active} particles propel themselves by applying a force on the surrounding fluid.
In a steady-state perspective, we assume that we are given a random ensemble of particle positions and swimming directions, and we aim to evaluate the associated large-scale rheology.
Swimming directions should not be taken as uniformly distributed, but should  depend on the surrounding fluid deformation,
which leads to a nontrivial interaction between the fluid flow and particles' swimming forces.
More precisely, our model is based on the following assumption: if the fluid is locally deformed, then the distribution of orientations depends on some local average of the symmetrized velocity gradient of the surrounding fluid around each particle.
Although this steady-state perspective is certainly simplistic,
our model does not prescribe the retroaction of the fluid on the particles a priori, but leaves it as part of the problem.
We start by modeling the swimming mechanism for a single particle, before combining it with~\eqref{eq:Stokes-pas} into a model for the whole active suspension.

\subsubsection{Single-particle swimming mechanism}
Let us place ourselves at the scale of an isolated particle $I$, and denote by $u$ the fluid velocity outside $I$.
Given a nonnegative smooth kernel $\chi$ with unit mass $\int_{\R^d}\chi=1$,
the locally averaged fluid deformation felt by the particle is taken of the form
\begin{equation}\label{eq:defin-EIu}
E_I(u)\,:=\,\fint_I\chi\ast\D(u).
\end{equation}
The precise choice of this operator does not matter in our analysis, provided that it is a compact operator applied to a restriction of $\D(u)$ around $I$.
Given a value $E_I(u)=E$ of this averaged fluid deformation, the particle adapts its random swimming direction: we denote by $\bar f(E)\in\R^d$ the resulting propulsion force and by $\tilde f(E)\in\Md^\Skew$ the resulting torque on the particle.
By the action-reaction principle, this force and torque must result from an action of the particle on the surrounding fluid. The detail of this action depends on the details of the swimming mechanism (flagella, cilia, etc.). The force field exerted by the particle on the fluid is denoted by $f(E)=f(\cdot,E)$, depending on the deformation~$E$, and is taken to be supported in the immediate neighborhood $(I+B)\setminus I$ of the particle.
By the action-reaction principle for forces and torques, we must have
\begin{eqnarray}\label{eq:neutrality.0}
\bar f(E)+\int_{(I+B)\setminus I}f(E)&=&0,\\
\Theta:\tilde f(E)+\int_{(I+B)\setminus I}\Theta x\cdot f(E)&=&0,\qquad\forall\Theta\in\Md^\Skew,\nonumber
\end{eqnarray}
since the barycenter of particle $I$ is $\int_Iy\,dy=0$. The relation \eqref{eq:neutrality.0} reads as a local neutrality condition that actually entails that swimming forces act as dipoles in the fluid equations.
We emphasize that this is fundamentally different from the sedimentation problem studied in~\cite{DG-21+}, for which the force on particles originates from gravity and is not compensated locally by opposite forces in the surrounding fluid --- the backflow is then uniform and leads to more important large-scale effects.

We further make the following assumptions on the regularity of the swimming force with respect to the fluid deformation.

\begin{hyp}[Swimming mechanism]\label{hyp:swim}
The random force field defines almost surely a smooth map $\Md^\Sym_0\to \Ld^\infty((I+B)\setminus I)^d:E\mapsto f(E)$ such that, for all $k\ge1$,
\begin{align*}
\|f(E)\|_{\Ld^\infty((I+B)\setminus I)}&~\le~C\langle E\rangle,\\
\|\partial^kf(E)\|_{\Ld^\infty((I+B)\setminus I)}&~\le~C_k.\qedhere
\end{align*}
\qedhere
\end{hyp}
In view of quantitative homogenization results, we shall occasionally need to further assume that for large strain rates $E$ the swimming direction becomes a deterministic function of $E$. This technical assumption is physically reasonable.
\begin{hyp}[Swimming in large strain rate]\label{hyp:swim-large}
There exist a deterministic direction field $f^\infty:\Md^\Sym_0\to\R^d$, a random strength field $\calS \in\Ld^\infty((I+B)\setminus I)$, and an exponent~$\gamma>0$, such that for all $E\in\Md_0^\Sym$ and $k\ge1$ we have almost surely
\begin{align*}
\|f(E)-\calS f^\infty(E)\|_{\Ld^\infty((I+B)\setminus I)}&~\le~ C\langle E\rangle^{1-\gamma},\\
\|\partial^kf(E)-\calS \partial^kf^\infty(E)\|_{\Ld^\infty((I+B)\setminus I)}&~\le~ C_k\langle E\rangle^{-\gamma}.\qedhere
\end{align*}
\end{hyp}

\subsubsection{Resulting system for many particles}
Before including the above single-particle swimming mechanism into the passive suspension model~\eqref{eq:Stokes-pas}, we start by making an assumption on the joint law of particles' swimming forces.

\begin{hyp}[Joint swimming forces]\label{hyp:joint}
Let $\{f_n(E)\}_n$ be a sequence of random maps, such that $f_n$ satisfies Hypothesis~\ref{hyp:swim} with $I=I_n$ for all $n$,
and such that
$\Ic$ and $\sum_nf_n(E)$ are jointly stationary for all $E$.
\end{hyp}

In order to include these swimming forces into the model~\eqref{eq:Stokes-pas} for a suspension of small particles $\{\e I_n\}_n$, they need to be properly rescaled.
The natural scaling happens to be~$O(\frac1\e)$, which is indeed the only scaling giving rise to a nontrivial and finite contribution in the macroscopic limit~$\e\downarrow 0$.
We add a coupling parameter $\kappa$, which stands for the activity strength and will need to be chosen small enough to perform the analysis.
In this $\e$-rescaling, the kernel $\chi$ defining the local averaged fluid deformations felt by the particles~\eqref{eq:defin-EIu} should naturally be replaced by $\chi_\e:=\e^{-d}\chi(\frac\cdot\e)$. This however leads to important difficulties due to the highly oscillatory local behavior of the fluid flow. Instead, we need to replace it by $\chi_\delta:=\delta^{-d}\chi(\frac\cdot\delta)$, for some intermediate averaging scale $\e\ll\delta\ll1$. (This ``meso-scale'' is also present in \cite{Saintillan_2018, Saintillan2014}.)
The resulting hydrodynamic model takes on the following guise,
\begin{equation}\label{eq:Stokes}
\left\{\begin{array}{ll}
-\triangle u_\e+\nabla P_\e&\\
 \qquad = h+\frac{\kappa}{\e} \sum_{n \in \mathcal{N}_\e(U)} f_{n,\e}(\fint_{\e I_n}\chi_\delta \ast\D( u_{\e}) ), &\text{in $U\setminus\Ic_\e(U)$},\\
\Div( u_\e)=0,&\text{in $U\setminus\Ic_\e(U)$},\\
u_\e=0,&\text{on $\partial U$},\\
\D( u_\e)=0,&\text{in $\Ic_\e(U)$},\\
\int_{\e\partial I_n}\sigma(u_\e,P_\e)\nu+\tfrac \kappa \e \bar f_{n}(\fint_{\e I_n}\chi_\delta \ast\D( u_{\e}) ) = 0,&\forall n \in \Nc_\e(U),\\
\int_{\e\partial I_n}\Theta(\cdot-\e x_n)\cdot\sigma(u_\e,P_\e)\nu&\\
\hspace{2.5cm}+\frac\kappa\e\,\Theta: \tilde f_{n}(\fint_{\e I_n}\chi_\delta \ast\D( u_{\e}) ) = 0, &  \forall n \in \Nc_\e(U),\, \Theta \in \Md^\Skew,
\end{array}\right.
\end{equation}
where we have set
\[f_{n,\e}(E)\,:=\,f_{n,\e}(\cdot,E)\,:=\,\e^{-d}f_n\big(\tfrac{\cdot}\e,E\big).\]
As above, the pressure is anchored via $\int_{U \setminus \Ic_\e(U)} P_\e=0$.

In what follows, it will be convenient to use an equivalent formulation of swimming forces. While each force field $f_n$ is supported in the particle neighborhood $(I_n+B)\setminus I_n$ in the fluid domain, we may naturally extend $f_n$ inside the particle domain~$I_n$ to match its propulsion force and torque.
More precisely, we can uniquely define $f_{n}$ in~$I_n$ as an affine function such that
\begin{equation}\label{eq:extension-fn}
\bar f_{n}(E)\,=\,\int_{I_n}f_{n}(E),\qquad\tilde f_{n}(E)\,=\,\int_{I_n}f_{n}(E)\otimes(x-x_n).
\end{equation}
In terms of these extensions, the neutrality condition~\eqref{eq:neutrality.0} takes the simpler form
\begin{eqnarray}\label{eq:neutrality}
\int_{I_n+B}f_n(E)&=&0,\\
\int_{I_n+B}\Theta (x-x_n)\cdot f_n(E)&=&0,\qquad\forall\Theta\in\Md^\Skew,\nonumber
\end{eqnarray}
and the system~\eqref{eq:Stokes} then becomes
\begin{equation}\label{eq:Stokes-re}
\left\{\begin{array}{ll}
-\triangle u_\e+\nabla P_\e&\\
 \qquad = h+\frac{\kappa}{\e} \sum_{n \in \mathcal{N}_\e(U)} f_{n,\e}(\fint_{\e I_n}\chi_\delta \ast\D( u_{\e}) ), &\text{in $U\setminus\Ic_\e(U)$},\\
\Div( u_\e)=0,&\text{in $U\setminus\Ic_\e(U)$},\\
u_\e=0,&\text{on $\partial U$},\\
\D( u_\e)=0,&\text{in $\Ic_\e(U)$},\\
\int_{\e\partial I_n}\sigma(u_\e,P_\e)\nu+\tfrac \kappa \e \int_{\e I_n} f_{n,\e}(\fint_{\e I_n}\chi_\delta \ast\D( u_{\e}) )=0 ,&\forall n \in \Nc_\e(U),\\
\int_{\e\partial I_n}\Theta(x-\e x_n)\cdot\sigma(u_\e,P_\e)\nu&\\
\quad+\tfrac\kappa\e\int_{\e I_n}\Theta(x-\e x_n)\cdot f_{n,\e}(\fint_{\e I_n}\chi_\delta \ast\D( u_{\e}) )=0, &\forall n \in \Nc_\e(U),\,\Theta \in \Md^\Skew.
\end{array}\right.
\end{equation}
With this reformulation, we may readily check that the solution $u_\e$ can be viewed as the orthogonal projection on $\{u\in H^1_0(U)^d:\D(u)|_{\Ic_\e(U)}=0,\,\Div(u)=0\}$ of the solution of
\begin{equation}\label{eq:Stokes-rre}
\left\{\begin{array}{ll}
-\triangle v_\e+\nabla R_\e = h+\frac{\kappa}{\e} \sum_{n \in \mathcal{N}_\e(U)} f_{n,\e}(\fint_{\e I_n}\chi_\delta \ast\D( u_{\e}) ), &\text{in $U$},\\
\Div( v_\e)=0,&\text{in $U$},\\
v_\e=0,&\text{on $\partial U$}.
\end{array}\right.
\end{equation}
This observation is not used in the sequel.

\subsection{Heuristics and relation to the physics literature}\label{sec:heuristic}

In this section, we relate the problem we consider to the building block used in \cite{Saintillan_2018, Saintillan2014}.
In the physics literature, rather than the full problem~\eqref{eq:Stokes} (involving boundary conditions, and a general forcing term $h$), one usually considers a forcing term in form of an imposed strain rate $E\in\Md_0^\Sym$ at infinity --- in which case it is natural to replace \eqref{eq:defin-EIu} by $E$ itself (which renders the problem linear).
The velocity field of the suspension on microscopic scale is then given by $u_E+Ex$, where~$u_E$ is a suitable solution of the following infinite-volume problem,
\begin{align}
\label{eq:model_imposed_strain}
\left\{\begin{array}{ll}
-\triangle u_E+\nabla P_E=\kappa\sum_nf_n(E),&\text{in $\R^d\setminus \Ic$},\\
\Div(u_E)=0,&\text{in $\R^d\setminus \Ic$},\\
\D(u_E+Ex)=0,&\text{in $\Ic$},\\
\int_{\partial I_n}\sigma(u_E+Ex,P_E)\nu+\kappa\int_{I_n}f_n(E)=0,&\forall n,\\
\int_{\partial I_n}\Theta(x-x_n)\cdot\sigma(u_E+Ex,P_E)\nu\\ \hspace{2cm} +\kappa\int_{I_n}\Theta(x-x_n)\cdot f_n(E)=0,&\forall n,\,\Theta\in\Md^\Skew.
\end{array}\right.
\end{align}
In these terms, the effective viscosity $\Bb_\tot(E)$ of the suspension in direction $E$ is obtained as the associated ensemble-averaged stress. Splitting the contributions of the stress in the fluid domain and in the particles, and taking into account swimming forces, we get for all~$E'\in\Md^\Sym_0$,
\begin{multline}\label{eq:viscosity-total}
E':2\Bb_\tot(E)\,=\,E':\expecm{\sigma(u_E+Ex,P_E)\mathds1_{\R^d\setminus\Ic}}\\
+E':\expecm{\sigma_E\mathds1_{\Ic}}-\kappa \expecM{\sum_nE'(x-x_n)\cdot f_n(E)},
\end{multline}
where $\sigma_E$ stands for the stress inside the particles, which we shall define in the proof of Lemma~\ref{lem:KE} below via  the extension problem
\begin{equation}\label{eq:sigma-E}
\left\{\begin{array}{ll}
-\Div(\sigma_E)= \kappa f_n(E),&\text{in $I_n$},\\
\sigma_E\nu=\sigma(u_E+Ex,P_E)\nu,&\text{on~$\partial I_n$.}
\end{array}\right.
\end{equation}
Noting that rigidity constraints $\D(u_E+Ex)=0$ in $\Ic$ yield
\[\sigma(u_E+Ex,P_E)\mathds1_{\R^d\setminus\Ic}\,=\,2\D(u_E+Ex)-P_E\Id\mathds1_{\R^d\setminus\Ic},\]
and recalling that $\expec{\D(u_E)}=0$ (as the average of a gradient), the first contribution in~\eqref{eq:viscosity-total} takes the form
\[E':\expecm{\sigma(u_E+Ex,P_E)\mathds1_{\R^d\setminus\Ic}}\,=\,2E':E.\]
Next, using~\eqref{eq:sigma-E}, integrating by parts, and using  \eqref{eq:extension-fn}, and the skew-symmetry of $\tilde f_n(E)$,
one can reformulate the second contribution in~\eqref{eq:viscosity-total} as the ensemble-averaged stresslet on the particles
\begin{eqnarray*}
\expecm{\sigma_E\mathds1_{\Ic}}&=&\expecM{\sum_n\tfrac{\mathds1_{I_n}}{|I_n|}\int_{I_n}\sigma_E}\\
&=&\expecM{\sum_n\tfrac{\mathds1_{I_n}}{|I_n|}\int_{\partial I_n}\sigma(u_E+Ex,P_E)\nu\otimes_s(x-x_n)}.
\end{eqnarray*}
The effective viscosity~\eqref{eq:viscosity-total} thus takes the form
\begin{multline}\label{eq/Bact-0}
E':2\Bb_\tot(E)
\,=\,2E':E\\
+\expecM{\sum_n\tfrac{\mathds1_{I_n}}{|I_n|}\int_{\partial I_n}E'(x-x_n)\cdot\sigma(u_E+Ex,P_E)\nu}
-\kappa \expecM{\sum_nE'(x-x_n)\cdot f_n(E)}.
\end{multline}
For convenience, we shall distinguish the passive from the active contributions in this expression, and we decompose the solution $u_E$ as $u_E=\psi_E+\kappa \phi_E$ in terms of the so-called passive and active correctors $\psi_E$ and $\phi_E$, defined as suitable solutions of
\[\left\{\begin{array}{ll}
-\triangle \psi_E+\nabla \Sigma_E=0,&\text{in $\R^d\setminus \Ic$},\\
\Div(\psi_E)=0,&\text{in $\R^d\setminus \Ic$},\\
\D(\psi_E)+E=0,&\text{in $\Ic$},\\
\int_{\partial I_n}\sigma(Ex+\psi_E,\Sigma_E)\nu=0,&\forall n,\\
\int_{\partial I_n}\Theta(x-x_n)\cdot\sigma(Ex+\psi_E,\Sigma_E)\nu=0,&\forall\Theta\in\Md^\Skew,~\forall n.
\end{array}\right.\]
and
\[\left\{\begin{array}{ll}
-\triangle \phi_E+\nabla \Pi_E=\sum_nf_n(E),&\text{in $\R^d\setminus \Ic$},\\
\Div(\phi_E)=0,&\text{in $\R^d\setminus \Ic$},\\
\D(\phi_E)=0,&\text{in $\Ic$},\\
\int_{\partial I_n}\sigma(\phi_E,\Pi_E)\nu+\int_{I_n}f_n(E)=0,&\forall n,\\
\int_{\partial I_n}\Theta(x-x_n)\cdot\sigma(\phi_E,\Pi_E)\nu+\int_{I_n}\Theta(x-x_n)\cdot f_n(E)=0,&\forall\Theta\in\Md^\Skew,~\forall n.
\end{array}\right.\]
Precise definitions of these correctors are postponed to Section~\ref{sec:cor}. In these terms, the effective viscosity takes the form
\begin{equation}\label{eq/Bact-rewr}
\Bb_\tot(E)\,=\,\Bb_\pas E+ \kappa \Bb_\act(E),
\end{equation}
where the passive and active contributions are given by
\begin{eqnarray*}
E':2\Bb_\pas E&:=&2E':E+\expecM{\sum_n\tfrac{\mathds1_{I_n}}{|I_n|}\int_{\partial I_n}E'(x-x_n)\cdot\sigma(\psi_E+Ex,\Sigma_E)\nu},\\
E':2\Bb_\act(E)&:=& \expecM{\sum_n\tfrac{\mathds1_{I_n}}{|I_n|}\int_{\partial I_n}E'(x-x_n)\cdot\sigma(\phi_E,\Pi_E)\nu}\\
&&-\expecM{\sum_nE'(x-x_n)\cdot f_n(E)}.
\end{eqnarray*}
Using equations for correctors, these expressions are equivalently given by
\begin{eqnarray}
E:2\Bb_\pas E&=&\expecm{2|\!\D(\psi_E)+E|^2},\nonumber\\
E':2\Bb_\act(E)&=&-\expecM{\sum_n(\psi_{E'}+E'(x-x_n))\cdot f_n(E)}.\label{eq/Bact-rewr-1}
\end{eqnarray}
As one could have expected, the active contribution $E':\Bb_\act(E)$ coincides with the averaged swimming force along the passive corrector in direction $E'$. Note in particular that the active corrector $\phi_E$ \emph{does not appear} in that formulation. 
The first main contribution of the present article is to properly justify these effective viscosity formulas using homogenization theory, cf.~Theorem~\ref{th:homog}.

\medskip

While these general formulas are difficult to analyze in practice without resorting to numerical simulations, it is a classical problem in the physics community to derive simpler approximate formulas in the dilute regime, which are easier to interpret and provide a useful grasp at the physical behavior of suspensions. This is made possible by replacing correctors by explicit solutions of single-particle problems: we refer to Theorem~\ref{theor:dilute} and Section~\ref{sec:visco-reduc} below for justification of such dilute approximations based on the methods introduced by the last two authors in~\cite{DG-20}.Note that the formula for the active contribution of the effective viscosity in the dilute approximation coincides with the active elastic stress first formally derived in~\cite{Saintillan_2008,Haines_2009,Saintillan-10a,Saintillan2014,Potomkin_2016}, cf.~\eqref{eq:altern-Bact-1} below; see also~\cite{Girodroux-Lavigne-22}.

\subsection{Main results: well-posedness, homogenization, and dilute regime}\label{sec:main-results}
We turn to the statement of our main results
and we start with the well-posedness of the hydrodynamic model~\eqref{eq:Stokes}.
It requires either the coupling constant~$\kappa$ to be small enough or the interparticle distance $\ell$ to be large enough.
Note that condition~\eqref{eq:cond-small-kappa} below is nearly almost optimal in general: the same condition with $\eta=0$ is required to ensure the perturbative well-posedness of the homogenized equation~\eqref{eq:homog}, see first paragraph of Section~\ref{sec-qual-homog}.
\begin{prop}[Well-posedness of the hydrodynamic model]\label{lem:well-posed}
Let Hypotheses~\ref{hyp:part}, \ref{hyp:swim}, and~\ref{hyp:joint} hold, and assume $\e\ell\le\delta$.
Provided for some $\eta>0$ we have that
\begin{equation}\label{eq:cond-small-kappa}
\kappa\ell^{\eta-d}\ll1
\end{equation}
is small enough (only depending on $\eta$, the dimension $d$, the domain $U$, and the unscaled kernel~$\chi$),
the system~\eqref{eq:Stokes} 
is well-posed almost surely for any $h \in \Ld^2(U)^d$: there exists a unique almost sure weak solution   
$(u_\e,P_\e)\in\Ld^2(\Omega;H^1_0(U)^d \times \Ld^2(U\setminus\Ic_\e(U))/\R )$ and it satisfies almost surely
\begin{equation}\label{lem:well-posed-bd}
\int_U |\nabla u_\e|^2+\int_{U\setminus\Ic_\e(U)} (P_\e)^2
\,\lesssim_\eta\,(1+\kappa^2)\Big(\kappa^2\ell^{-d}+\int_{U\setminus\Ic_\e(U)} |h|^2\Big).\qedhere
\end{equation}
\end{prop}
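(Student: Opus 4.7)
The plan is to recast the nonlinear system~\eqref{eq:Stokes-re} as a fixed-point equation on the closed Hilbert subspace
\[X\,:=\,\{u\in H^1_0(U)^d:\Div(u)=0,\ \D(u)=0\text{ in }\Ic_\e(U)\}\]
of $H^1_0(U)^d$, and to apply Banach's fixed-point theorem under the smallness assumption~\eqref{eq:cond-small-kappa}. Testing~\eqref{eq:Stokes-re} against $v\in X$ automatically encodes the force/torque balance on each particle, leading to the weak formulation: find $u_\e\in X$ such that for every $v\in X$,
\[\int_U 2\D(u_\e):\D(v)\,=\,\int_U h\cdot v\,+\,\frac\kappa\e\sum_{n\in\Nc_\e(U)}\int f_{n,\e}\big(E_n(u_\e)\big)\cdot v,\]
with $E_n(u):=\fint_{\e I_n}\chi_\delta\ast\D(u)$. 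The pressure $P_\e$ will be recovered a posteriori via de~Rham's theorem and bounded in $\Ld^2(U\setminus\Ic_\e(U))/\R$ by duality, using a Bogovskii-type lifting on the perforated domain.

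For frozen values $\{E_n\}_n\subset\Md^\Sym_0$, the linearized problem will be addressed by Lax--Milgram on $X$: coercivity of $(u,v)\mapsto\int 2\D u:\D v$ follows from the standard Korn inequality on $H^1_0(U)$, while continuity of the swimming-force term hinges on a key estimate of the form
\begin{equation*}
\Big|\tfrac\kappa\e\sum_n\int f_{n,\e}(E_n)\cdot v\Big|\,\lesssim\,\kappa\,\ell^{\eta-d/2}\Big(1+\big(\tfrac1N\sum_n|E_n|^2\big)^{1/2}\Big)\|\nabla v\|_{\Ld^2(U)},
\end{equation*}
where $N$ is the number of particles. To establish it one combines: (i) the force and torque neutrality~\eqref{eq:neutrality}, which allows to subtract from $v$ the rigid motion $R_n:=v|_{\e I_n}$ in each pairing, exposing a genuine dipole cancellation; (ii) a Korn--Poincar\'e inequality on the enlarged $\ell$-neighborhoods $\e(I_n+\ell B)$, which gains a factor $\e\ell$ from the vanishing of $v-R_n$ on $\e I_n$; and (iii) the hardcore condition of Hypothesis~\ref{hyp:part}, which ensures that these neighborhoods are pairwise disjoint and thus allows to sum over particles against the discrete density $(\e\ell)^{-d}$ so that all powers of $\e$ cancel.

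The nonlinear map $T:X\to X$ is then defined by $T(u)$ being the solution of the linear problem above with $E_n=E_n(u)$. Hypothesis~\ref{hyp:swim} with $k=1$ yields a uniform Lipschitz control of $f_n(E)$ in $E$, and Young's convolution inequality together with $\e\ell\le\delta$ provides a smoothing bound of the form $(\sum_n|E_n(u)-E_n(u')|^2)^{1/2}\lesssim \ell^{-d/2}\|\nabla(u-u')\|_{\Ld^2(U)}$. Combined with the continuity estimate above, this gives
\[\|\nabla(T(u)-T(u'))\|_{\Ld^2(U)}\,\lesssim\,\kappa\,\ell^{\eta-d}\,\|\nabla(u-u')\|_{\Ld^2(U)},\]
so the smallness assumption~\eqref{eq:cond-small-kappa} makes $T$ a strict contraction and Banach's theorem produces the unique $u_\e\in X$. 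The bound~\eqref{lem:well-posed-bd} then follows by applying the linear energy estimate at the fixed point, the factor $(1+\kappa^2)$ accounting for the absorption of the contraction term.

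The main obstacle is the key continuity estimate for the swimming-force contribution. The raw $H^{-1}(U)$-norm of $\frac\kappa\e\sum_n f_{n,\e}(E_n)$ is singular as $\e\downarrow0$ (the source tends to a distributional array of stresslets), so standard Stokes elliptic theory is insufficient: one must exploit the rigidity constraint built into~$X$ to convert the neutrality conditions~\eqref{eq:neutrality} into a full dipole cancellation at the level of the duality pairing, and then carefully balance the resulting per-particle estimates against the discrete particle density in order to eliminate all $\e$-dependent factors. The small loss $\eta>0$ is a Sobolev artifact in this accounting, which explains its appearance in the smallness condition~\eqref{eq:cond-small-kappa}.
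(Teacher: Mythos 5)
Your overall strategy (fixed point on the rigid subspace $X\subset H^1_0(U)^d$, Lax--Milgram for the frozen-strain linear problem, dipole cancellation from the neutrality condition~\eqref{eq:neutrality} via Poincar\'e--Korn, disjointness from the hardcore condition) is exactly the paper's Step~1. But that step, carried out correctly, only yields the contraction constant $\kappa\ell^{-d/2}$, not the claimed $\kappa\ell^{\eta-d}$. To see this: for $v\in X$ and $R_n:=v|_{\e I_n}$ (rigid), neutrality and Cauchy--Schwarz give
\[
\Big|\tfrac\kappa\e\sum_n\int v\cdot\big(f_{n,\e}(E_n)-f_{n,\e}(E_n')\big)\Big|\lesssim\tfrac\kappa\e\sum_n\|v-R_n\|_{\Ld^2(\e(I_n+B))}\,\|f_{n,\e}(E_n)-f_{n,\e}(E_n')\|_{\Ld^2(\e(I_n+B))},
\]
and Poincar\'e--Korn on $\e(I_n+B)$ gains exactly a factor $\e$ (not $\e\ell$: the diameter of $\e(I_n+B)$ is $O(\e)$, and the swimming force is supported there, so the $\ell$-scale fattening buys nothing). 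After Cauchy--Schwarz over $n$ and the Jensen argument~\eqref{eq:bnd-Jensen-ellscal}, all $\e$-powers cancel and you are left with $\kappa\ell^{-d/2}\,\|\nabla v\|_{\Ld^2(U)}\|\nabla(u-u')\|_{\Ld^2(U)}$. Your arithmetic mixes normalized and unnormalized sums: the continuity estimate carries $(\tfrac1N\sum_n|E_n|^2)^{1/2}$, your smoothing bound controls $(\sum_n|E_n(u)-E_n(u')|^2)^{1/2}$ (which, moreover, is off by a factor $\e^{-d/2}$ since $\fint_{\e I_n}\sim\e^{-d}\int_{\e I_n}$), and the two do not compose to give a second power of $\ell^{-d/2}$.

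This matters: since $\ell\gg1$ is the regime of interest, $\kappa\ell^{-d/2}\ll1$ is strictly stronger than $\kappa\ell^{\eta-d}\ll1$ for $\eta<d/2$, so the $H^1$ argument alone does not prove the proposition as stated. The paper's essential additional ingredient is its Step~2: a contraction estimate in $W^{1,p}_0(U)^d$ for $1<p\le2$, relying on H\"ofer's dilute deterministic $\Ld^p$ regularity theory for the perforated Stokes operator (\cite{Hofer-19}), valid for $\ell\gg_p 1$. This yields the contraction constant $\kappa\ell^{-d/p}$, hence the nearly-optimal condition $\kappa\ell^{\eta-d}\ll1$ with $\eta=d(1-1/p)\to0$ as $p\downarrow 1$. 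This is also the true origin of the loss $\eta$; it is not a ``Sobolev artifact'' of the $H^1$ accounting. A Step~3 is then still needed to bootstrap the $W^{1,p}$ fixed point back into $H^1_0$ and derive~\eqref{lem:well-posed-bd}. None of this appears in your sketch, and without it the stated smallness condition~\eqref{eq:cond-small-kappa} cannot be reached.
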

We now state the homogenization result for this model in the macroscopic limit $\e\downarrow0$, in the simplified situation when the mesoscopic averaging scale~$\delta$ is fixed (see the proof for the associated corrector result).

\begin{theor}[Homogenization at fixed $\delta>0$]\label{th:homog}
Let Hypotheses~\ref{hyp:part}, \ref{hyp:swim}, and~\ref{hyp:joint} hold,
as well as the smallness condition~\eqref{eq:cond-small-kappa} to ensure well-posedness.
For any $h \in W^{1,\infty}(U)^d$, as~$\e\downarrow0$ with $\delta>0$ fixed,
the almost sure weak solution $(u_\e,P_\e)$ of~\eqref{eq:Stokes} 
satisfies almost surely
\[\begin{array}{ccll}
u_\e&\cvf& \bar u_\delta,&\text{in $H^1_0(U)$},\\
P_\e\mathds1_{U\setminus\Ic_\e(U)}&\cvf&(1-\lambda)\bar P_\delta+(1-\lambda)\bb:\D(\bar u_\delta)&\\
&&\hspace{1.2cm}+(1-\lambda)\kappa\big( \cc(\chi_\delta\ast\D(\bar u_\delta))-\fint_U \cc(\chi_\delta\ast\D(\bar u_\delta))\big),\quad&\text{in $\Ld^2(U)$,}
\end{array}\]
where~$(\bar u_\delta,\bar P_\delta)\in H^1_0(U)^d\times\Ld^2(U)/\R$ is the unique solution of the well-posed macroscopic system
\begin{equation}\label{eq:homog}
\left\{\begin{array}{ll}
-\Div(2\Bb_\pas\D(\bar u_\delta))-\Div(2\kappa \Bb_\act(\chi_\delta\ast\D(\bar u_\delta)))+\nabla \bar P_\delta\,=\,(1-\lambda)h,&\text{in $U$},\\
\Div(\bar u_\delta)=0,&\text{in $U$},\\
\bar u_\delta=0,&\text{on $\partial U$},
\end{array}\right.
\end{equation}
where $\lambda:=\expec{\mathds1_{\Ic}}$ is the particle volume fraction, and where the effective tensors $\Bb_\pas$, $\Bb_\act$, $\bb$, $\cc$ are defined as follows, in terms of the correctors $(\psi,\Sigma)$ and $(\phi,\Pi)$ given in~\eqref{e.cor-1} and~\eqref{eq:phi-cor} below,
\begin{enumerate}[$\bullet$]
\item The \emph{passive effective viscosity} $\Bb_\pas$ is a positive definite symmetric linear map on the space of symmetric trace-free matrices $\Md_0^\Sym$: together with the associated symmetric trace-free matrix $\bb\in\Md_0^\Sym$, it is defined for all $E\in\Md_0^\Sym$ by
\[2(\Bb_\pas-\Id) E+(\bb:E)\Id\,:=\,\expecM{\sum_n\tfrac{\mathds1_{I_n}}{|I_n|}\int_{\partial I_n}\sigma(\psi_E+Ex,\Sigma_E)\nu\otimes_s(x-x_n)},\]
or equivalently,
\begin{eqnarray}
E : 2\Bb_\pas E &:=& \expecm{2|\!\D(\psi_E) + E|^2},\label{eq:def-B}\\
\bb : E &:=&\tfrac1{d}\, \expecM{\sum_n \tfrac{\mathds{1}_{I_n}}{|I_n|} \int_{\partial I_n} (x-x_n) \cdot \sigma(\psi_E + Ex, \Sigma_E) \nu}.\label{eq:def-b}
\end{eqnarray}
\item The \emph{active effective viscosity} $\Bb_\act$ is given by
\begin{equation}\label{eq:def-Bact}
\Bb_\act\,:=\,\Cc+ \tfrac12 \Ff,
\end{equation}
where the map $\Cc:\Md_0^\Sym\to \Md_0^\Sym$, together with the associated map $\cc:\Md_0^\Sym\to \R$, is defined for all $E\in\Md_0^\Sym$ by
\begin{equation}\label{eq:def-C}
2\Cc(E) +\cc(E)\Id\,:=\,\expecM{\sum_n\tfrac{\mathds1_{I_n}}{|I_n|}\int_{\partial I_n} \sigma(\phi_E, \Pi_E)\nu\otimes_s(x-x_n)},
\end{equation}
and where the map $\Ff:\Md_0^\Sym\to \Md_0^\Sym$ is defined for all $E,E'\in\Md_0^\Sym$ by
\begin{equation}\label{eq:def-EF-0}
E':\Ff(E)\,:=\,-\expecM{\sum_n\tfrac{\mathds1_{I_n}}{|I_n|}\int_{I_n+B}E'(x-x_n)\cdot f_n(E)},
\end{equation}
or equivalently for all $E,E'\in \Md_0^\Sym$,
\begin{align}\label{eq:def-C+}
E':2\Bb_\act(E)&~~=~~
-\expecM{\sum_n \tfrac{\mathds1_{I_n}}{|I_n|} \int_{I_n+B} (\psi_{E'}+E'(x-x_n))\cdot f_n(E)},\\
\cc(E)&~~=~~ \tfrac1d\expecM{\sum_n\tfrac{\mathds1_{I_n}}{|I_n|}\Big(\int_{\partial I_n} (x-x_n) \cdot\sigma(\phi_E, \Pi_E)\nu\Big)}.\hspace{1.2cm}\qedhere
\end{align}
\end{enumerate}
\end{theor}
Next, we combine the above (nonlocal) homogenization limit with the (local) limit $\delta\downarrow0$.
In order to get beyond a purely diagonal regime, cf.~\eqref{eq:diagonal} below, we  need to appeal to the quantitative homogenization techniques developed in~\cite{DG-21b} by the last two authors in the context
of the Stokes equation with rigid inclusions. This requires a quantitative mixing assumption such as Hypothesis~\ref{hyp:part-strong} (which holds in particular for hardcore Poisson point processes).
Although from the modeling viewpoint the choice $\delta\sim\e$ could be more natural, it is not accessible to our analysis, and we are restricted to~\eqref{eq:cond-delta-small} below.
\begin{theor}[Homogenization as $\delta\downarrow0$]\label{theor:loc-nonlin-diag}
Let Hypotheses~\ref{hyp:part}, \ref{hyp:swim}, and~\ref{hyp:joint} hold,
as well as the smallness condition~\eqref{eq:cond-small-kappa} to ensure well-posedness,
and further let the quantitative mixing Hypothesis~\ref{hyp:part-strong} and the technical Hypothesis~\ref{hyp:swim-large} hold.
Then, for any $h \in W^{1,\infty}(U)^d$,
{as $\e,\delta\downarrow0$, in the regime
\begin{equation}\label{eq:cond-delta-small}
\delta^{-s}\e~\to~0,\qquad\text{for some $s>d+1$},
\end{equation}}
the almost sure weak solution $(u_\e,P_\e)$ of~\eqref{eq:Stokes} 
satisfies almost surely
\[\begin{array}{ccll}
u_\e&\cvf& \bar u,&\text{in $H^1_0(U)^d$},\\
P_\e\mathds1_{U\setminus\Ic_\e(U)}&\cvf&(1-\lambda)\bar P+(1-\lambda)\bb:\D(\bar u)&\\
&&\hspace{1.2cm}+(1-\lambda)\kappa\big( \cc(\D(\bar u))-\fint_U \cc(\D(\bar u))\big),\quad&\text{in $\Ld^2(U)$,}
\end{array}\]
where~$(\bar u,\bar P)\in H^1_0(U)^d\times\Ld^2(U)/\R$ is the unique solution of
\begin{equation}\label{eq:homog-0}
\left\{\begin{array}{ll}
-\Div(2\Bb_\tot(\D(\bar u)))+\nabla \bar P\,=\,(1-\lambda)h,&\text{in $U$},\\
\Div(\bar u)=0,&\text{in $U$},\\
\bar u=0,&\text{on $\partial U$}.
\end{array}\right.
\end{equation}
in terms of the total effective viscosity
\[\Bb_\tot(E)\,:=\,\Bb_\pas E+  \kappa\Bb_\act(E),\]
where we recall that $\lambda,\Bb_\pas,\Bb_\act,\bb,\cc$ are defined in Theorem~\ref{th:homog} above.
\end{theor}

The above shows that the effective stress-strain constitutive relation $E\mapsto2\Bb_\pas E$ is replaced by the nonlinear (non-Newtonian) relation~$E\mapsto2\Bb_\pas E+2\Bb_\act(E)$ due to the effect of particle activity.

Our last result concerns the analysis of the latter in the dilute regime, and we establish the active counterpart to Einstein's effective viscosity formula.
Before stating the result, we need to recall some notation from~\cite{DG-20}: Denote by $Q_r(x) = x + [-\tfrac{r}{2}, \tfrac{r}{2})^d$ the cube of sidelength $r$ centered at $x$, and set $Q(x) = Q_1(x)$, $Q_r = Q_r(0)$, and $Q = Q_1(0)$.  The intensity of the point process $\{x_n\}_n$ is
\[\lambda_1\,:=\,\E\big[{\sharp\{n:x_n\in Q\}}\big],\]
and we further define the two- and three-point intensities as
\begin{eqnarray*}
\lambda_2&:=&\ell^{-2d}\sup_{x\in\R^d}\E\Big[{\sharp\big\{(n,m):n\ne m,~x_n\in Q_\ell,~x_m\in Q_\ell(x)\big\}}\Big],
\\
\lambda_3&:=&\ell^{-3d}\sup_{x_1,x_2\in\R^d}\E\Big[\sharp\big\{(n_0,n_1,n_2):~n_0,n_1,n_2 \text{ pairwise distinct},\\
&&\hspace{4cm}~x_{n_0}\in Q_\ell,~x_{n_1}\in Q_\ell(x_{1}),~x_{n_2}\in Q_\ell(x_{2})\big\}\Big],
\end{eqnarray*}
where we recall that $\ell$ stands for (half) the interparticle distance, cf.~\eqref{eq:def-ell}. 
Note that by definition the intensity can be compared to the particle volume fraction $\lambda$, cf.~\eqref{eq:part-vol-frac},
and we have for $k=2,3$,
\[\lambda_1\simeq\lambda\lesssim\ell^{-d}\qquad\text{and}\qquad\lambda_k \lesssim\ell^{-kd}.\]
We further recall that the two- and three-point densities $g_2,g_3$ of the point process are defined by the following relations, for all $\zeta_2\in C^\infty_c((\R^d)^2)$ and  $\zeta_3\in C^\infty_c((\R^d)^3)$,
\begin{eqnarray}
\iint_{(\R^d)^2}\zeta_2(x,y)\,g_2(x,y)\,dxdy&=&\expecM{\sum_{n\ne m}\zeta_2(x_n,x_m)},\label{eq:def-g2}
\\
\iint_{(\R^d)^3}\zeta_3(x,y,z)\,g_3(x,y,z)\,dxdydz&=&\expecM{\sum_{n_0,n_1,n_2\atop\text{distinct}}\zeta_3(x_{n_0},x_{n_1},x_{n_2})}.\nonumber
\end{eqnarray}
In these terms, the above definitions of the two- and three-point intensities are equivalently written as
\begin{eqnarray}
\lambda_2&=&\sup_{x\in\R^d}\fint_{Q_\ell}\fint_{Q_\ell(x)}g_2(y,z)\,dydz,\label{eq:def-lambda2}\\
\lambda_3&=&\sup_{x_1,x_2\in\R^d}\fint_{Q_\ell}\fint_{Q_\ell(x_1)}\fint_{Q_\ell(x_2)}g_3(x,y,z)\,dxdydz.\nonumber
\end{eqnarray}
With this notation at hand, we may now state the following result on the first-order dilute expansion of the effective viscosity. The expansion of the passive contribution $\Bb_\pas$ was already established in~\cite{DG-20}, and we extend it here to the active setting.

\begin{theor}[Dilute expansion of the effective viscosity]\label{theor:dilute}
Let Hypotheses~\ref{hyp:part}, \ref{hyp:swim}, and~\ref{hyp:joint} hold, and further assume
\begin{enumerate}[(a)]
\item \emph{Independence condition:} The random shapes and swimming forces $\{I_n^\circ,f_n\}_n$ are iid copies of a given random open subset $I^\circ$ and of a random map $f^\circ$, independently of the point process $\{x_n\}_n$.
\item \emph{Decay of correlations:} The point process $\{x_n\}_n$ is strongly mixing, and the two- and three-point correlation functions
\begin{eqnarray*}
h_2(x,y)&:=&g_2(x,y)-\lambda_1^2,\\
h_3(x,y,z)&:=&g_3(x,y,z)-\lambda_1^3-\lambda_1(h_2(x,y)+h_2(x,z)+h_2(y,z)),
\end{eqnarray*}
have algebraic decay: there exists $\gamma>0$ such that for all $x,y,z$,
\begin{equation}\label{eq:h2-bnd}
\quad|h_2(x,y)|\,\lesssim\,\langle x-y\rangle^{-\gamma}, \quad |h_3(x,y,z)| \,\lesssim\,\langle x-y\rangle^{-\gamma} \wedge \langle x-z\rangle^{-\gamma} \wedge \langle y-z\rangle^{-\gamma}.
\end{equation}
\end{enumerate}
Then, we have
\begin{multline}\label{antoine.5}
\Big|\Bb_\tot(E)-\big(E+\lambda_1\Bb_\pas^{(1)}E+\kappa\lambda_1\Bb_\act^{(1)}(E)\big)\Big|\\
\,\lesssim\,\langle E\rangle\Big( \lambda_2|\!\log\lambda_1|+(\lambda_1 \lambda_2+\lambda_1 \lambda_3|\!\log\lambda_1|)^\frac12\Big),
\end{multline}
where we have set
\begin{eqnarray*}
E':2\Bb_\pas^{(1)}E&:=&\expecM{\int_{\partial I^\circ}E'x\cdot\sigma(\psi_E^\circ+Ex,\Sigma_E^\circ)\nu},\\
E':2\Bb_\act^{(1)}(E)&:=&-\expecM{\int_{2B}(\psi_{E'}^\circ+E'x)\cdot f^\circ(E)},
\end{eqnarray*}
in terms of the solution $(\psi_E^\circ,\Sigma_E^\circ)$ of the single-particle problem
\begin{equation}\label{eq:single-part}
\left\{\begin{array}{ll}
-\triangle\psi_{E}^\circ+\nabla \Sigma_{E}^\circ=0,&\text{in $\R^d\setminus I^\circ$},\\
\Div(\psi_{E}^\circ)=0,&\text{in $\R^d\setminus I^\circ$},\\
\D(\psi_{E}^\circ+Ex)=0,&\text{in $I^\circ$},\\
\int_{\partial I^\circ}\sigma(\psi_{E}^\circ+Ex,\Sigma_{E}^\circ)\nu=0,&\\
\int_{\partial I^\circ}\Theta x\cdot\sigma(\psi_{E}^\circ+Ex,\Sigma_{E}^\circ)\nu=0,&\forall\Theta\in\Md^\Skew.
\end{array}\right.
\end{equation}
In particular, in case of spherical particles $I^\circ=B$, these expressions take the explicit forms
\begin{align}
E':2\Bb_\pas^{(1)}E&~:=~(d+2)|B|E':E,\nonumber\\
E':2\Bb_\act^{(1)}(E)&~:=~-\int_{\R^d\setminus B}\big(1-\tfrac1{|x|^{d+2}}\big)E'x\cdot\expec{f^\circ(E)}\label{eq:form-Bact-1}\\
&\hspace{3cm}+\tfrac{d+2}2\int_{\R^d\setminus B}\!\!\big(1-\tfrac1{|x|^2}\big)\tfrac{(x\cdot E'x)x}{|x|^{d+2}}\cdot\expec{f^\circ(E)}.\qedhere
\end{align}
Note that the error bound in \eqref{antoine.5} is $\ll\lambda_1$ provided that $\lambda_2|\!\log\lambda_1|\ll\lambda_1$, and is in particular bounded by $\ell^{-3d/2}\ll\ell^{-d}$ in the regime $\ell\gg1$.
\end{theor}

Alternatively, arguing similarly as for the reformulation~\eqref{eq:def-C+} of~\eqref{eq:def-C} \&~\eqref{eq:def-EF-0},
the dilute active effective viscosity can be written as
\[E':2\Bb^{(1)}_\act(E)\,=\,E':\expecM{\sum_n\tfrac{\mathds1_{I_n}}{|I_n|}\bigg(\int_{\partial I_n}\sigma(\phi_E^n,\Pi_E^n)\nu\otimes_s(x-x_n)-\int_{I_n+B}f_n(E)\otimes_s(x-x_n)\bigg)}.\]
In particular, if $f^\circ(E)$ is rotationally symmetric around the direction $\bar f^\circ(E)$, we find by symmetry,
\begin{equation}\label{eq:altern-Bact-1}
2\Bb^{(1)}_\act(E)\,=\,\lambda_1\expecM{\alpha(E)\Big(\tfrac{\bar f^\circ(E)\otimes\bar f^\circ(E)}{|\bar f^\circ(E)|^2}-\tfrac1d\Id\Big)},
\end{equation}
for some random prefactor $\alpha(E)\in\R$, the sign of which is actually of critical interest and depends on the nature of the swimming mechanism. Note that formula~\eqref{eq:altern-Bact-1} coincides with the active elastic stress first formally derived in~\cite{Saintillan_2008,Haines_2009,Saintillan-10a,Saintillan2014,Potomkin_2016}; see also~\cite{Girodroux-Lavigne-22}.

Before we discuss the possible reduction of viscosity due to particle activity, let us comment on one assumption that we make throughout the paper: the distribution of the positions of particles is stationary and ergodic.
This is physically way too stringent since the dynamics in a bounded domain
would break this stationarity, and therefore allow the local density of particle positions to be non-constant (as indeed observed in experiments). From our homogenization point of view, stationarity is fortunately not essential and the difficulty of the analysis is not there: one could indeed easily weaken stationarity into a notion of ``local stationarity'' and allow the density of active particles to depend on the macroscopic space variable.

\subsection{Viscosity reduction}\label{sec:visco-reduc}
The main motivation of this work is to introduce a nontrivial (and hopefully somewhat realistic) model for active suspensions and rigorously establish a reduction of the viscosity of the plain fluid due to the activity of the particles. 
In the dilute regime, the above formulas provide a rigorous contribution to this celebrated topic. Although the question can  be reduced to understanding the sign of $\alpha(E)$ in~\eqref{eq:altern-Bact-1}, we take a shorter path here based on~\eqref{eq:form-Bact-1}.
To make  computations explicit, we restrict ourselves to the following simplified model for the swimming mechanism, see  e.g.~\cite{Girodroux-Lavigne-22,Haines2008},
\[f^\circ(E)\,:=\,\bar f(E)(|B|^{-1}\mathds1_B-\delta_{x(E)}),\]
where the force exerted by the particle on the surrounding fluid is reduced to a Dirac force at a surrounding point $x(E)\in\R^d\setminus B$.
Consider a shear deformation
$E=\tfrac s2(\ee_1\otimes\ee_2+\ee_2\otimes\ee_1)$ for some $s\in \R$.
In these terms, the formula~\eqref{eq:form-Bact-1} reads
\begin{multline*}
E:\Bb_\act^{(1)}(E)\,=\,\tfrac s2\big(1-\tfrac1{|x(E)|^{d+2}}\big)\big(x(E)_1\bar f(E)_2+x(E)_2\bar f(E)_1\big)\\
-s\tfrac{d+2}2\big(1-\tfrac1{|x(E)|^2}\big)\tfrac{x(E)_1x(E)_2}{|x(E)|^{d+2}}x(E)\cdot\bar f(E).
\end{multline*}
For a spherical particle with its swimming device viewed as a rigid elongated particle, the motion in shear flow has been well-studied on the formal level: in the limit of a strong angular diffusion, a standard heuristic computation shows that the preferred orientation of the particle is $\ee:=\frac1{\sqrt2}(\ee_1+\ee_2)$ or its opposite; see e.g.~\cite[Section~V.8]{Frenkel-46}. This leads us to choosing
\[\bar f(E)=\pm|\bar f(E)|\ee \qquad\text{and}\qquad x(E)=\pm\gamma|x(E)|\ee,\]
where $\gamma=1$ in case when the Dirac swimming force is ahead of the particle (``puller'' particle) and $\gamma=-1$ in case when it is behind the particle (``pusher'' particle). Hence, we  get
\begin{equation*}
E:\Bb_\act^{(1)}(E)\,=\,\gamma\tfrac s2|x(E)||\bar f(E)|\Big(1-\tfrac{d+2}2\tfrac1{|x(E)|^{d}}+\tfrac d2\tfrac1{|x(E)|^{d+2}}\Big),
\end{equation*}
which is negative (resp. positive) in case of a pusher (resp. puller).  This is in full agreement with well-known experiments and predictions of~\cite{Sokolov-Aranson-09, Sokolov2009, Clement-15}.

To conclude on the extent of the possible viscosity reduction, we come back to Theorem~\ref{theor:dilute} in case $\ell \gg 1$. As stated, the error bound in the result is then of order $O(\ell^{-3d/2})$,
\begin{align}
\label{eq:dilute_expansion_interpretation}
\Bb_\tot(E) = \big( 1 + \tfrac{d+2}{2}|B| \lambda_1 \big)E + \kappa \lambda_1 \Bb_\act^{(1)}(E) + O(\ell^{-3d/2}).
\end{align}
If $\Bb_\act^{(1)}(E) < 0$, we infer that the total effective viscosity is smaller than the viscosity of the plain fluid,
\[ E:\Bb_\tot(E) \,<\, |E|^2,\]
provided that the activity is strong enough $\kappa \gg 1$ in such a way that the active contribution exceeds the passive one.
Moreover, as $\lambda_1 = O(\ell^{-d})$, we see that the viscosity reduction could become of order $1$ if $\kappa = O(\ell^d)$. This drastic reduction of viscosity is however prohibited by the (only nearly optimal) condition~\eqref{eq:cond-small-kappa},  which is used to ensure the well-posedness of the microscopic model. In some sense, the above analysis can be compared to the enhancement of elastostriction by active charges analyzed in~\cite{FGLP}.

\bigskip\noindent
{\bf Outline of the article.}
The article is organized as follows.
Section~\ref{sec:cor} is dedicated to the introduction of correctors, the key quantities in homogenization.
The proofs of the homogenization results of Theorems~\ref{th:homog} and~\ref{theor:loc-nonlin-diag} are the object of Section~\ref{sec:hom}, while the dilute analysis and the proof of Theorem~\ref{theor:dilute} are postponed to Section~\ref{sec:dil}.

\bigskip\noindent
{\bf Notation.}
\begin{enumerate}[---]
\item For vector fields $u, u'$, and matrix fields $T, T'$, we set $(\nabla u)_{ij} = \nabla_j u_i$, $\Div (T)_i = \nabla_j T_{ij}$, $T : T' = T_{ij} T'_{ij}$, $(u \otimes u')_{ij} = u_i u'_j$, $(T^s)_{ij} = \frac12(T_{ij} + T_{ji})$, $\D(u) = (\nabla u)^s$, $(u \otimes_s u') = (u \otimes u')^s$. For a 3-tensor field $S$, the matrix $\Div(S)$ is defined by $\Div(S)_{ij} = \nabla_k S_{ijk}$. For a matrix $E$ and a vector field $u$, we write $\partial_E u = E: \nabla u$. We systematically use Einstein's summation convention on repeated indices. 
\item For a vector field $u$ and a scalar field $P$, we recall the notation $\sigma(u,P) = 2\D(u) - P \Id$ for the Cauchy stress tensor. 
\item We denote by $C \ge 1$ any constant that only depends on dimension $d$, on the constant~$\vartheta$ in Hypothesis~\ref{hyp:part}, and on the reference domain $U$. We use the notation $\lesssim$ (resp. $\gtrsim$) for $\le C \times $(resp. $\ge \frac1{C} \times$) up to such a multiplicative constant $C$. We write $\ll$ (resp. $\gg$) for $\le C \times $ (resp. $\ge C \times $) up to a sufficiently large multiplicative constant $C$. We add subscripts to $C$, $\lesssim$, $\gtrsim$ in order to indicate dependence on other parameters.
\item We write $\Md_0 \subset \R^{d \times d}$ for the subset of trace-free matrices, $\Md_0^\Sym$ for the subset of symmetric trace-free matrices, and $\Md^\Skew$ for the subset of skew-symmetric matrices. 
\item The ball centered at $x$ of radius $r$ in $\R^d$ is denoted by $B_r(x)$, and we set $B(x) = B_1(x)$, $B_r = B_r(0)$ and $B = B_1(0)$.
\item We use the standard notation $\langle a\rangle:=(1+|a|^2)^{1/2}$.
\end{enumerate}

\section{Corrector problems}\label{sec:cor}

We start by recalling the relevant correctors for the passive suspension problem as introduced in~\cite{D21,DG-21a, DG-21,DG-21b}, and then we define new correctors for the active problem.

\subsection{Passive corrector problems}
Correctors for passive suspensions were first defined in~\cite{DG-21} and are key to the definition of the associated effective viscosity $\Bb$, cf.~\eqref{eq:Stokes-pas-hom}.

\begin{lem}[Passive correctors \cite{DG-21}]\label{lem:pass-cor}
Let Hypothesis~\ref{hyp:part} hold.
For all $E\in\Md_0^\Sym$, 
there exist a unique random field $\psi_E\in \Ld^2(\Omega;H^1_\loc(\R^d)^d)$ and a unique pressure field $\Sigma_E\in  \Ld^2(\Omega;\Ld^2_\loc(\R^d\setminus\Ic))$ such that:
\begin{enumerate}[---]
\item almost surely, realizations of $\psi_E$ and $\Sigma_E$ satisfy
\begin{equation}\label{e.cor-1}
\quad\left\{\begin{array}{ll}
-\triangle\psi_E+\nabla \Sigma_E=0,&\text{in $\R^d\setminus\Ic$},\\
\Div(\psi_E)=0,&\text{in $\R^d\setminus\Ic$},\\
\D(\psi_E+Ex)=0,&\text{in $\Ic$},\\
\int_{\partial I_n}\sigma(\psi_E+Ex,\Sigma_E)\nu=0,&\forall n,\\
\int_{\partial I_n}\Theta(x-\e x_n)\cdot\sigma(\psi_E+Ex,\Sigma_E)\nu=0,&\forall n,\,\forall\Theta\in\Md^\Skew,
\end{array}\right.
\end{equation}
\item the corrector gradient $\nabla\psi_E$ and the pressure $\Sigma_E\mathds1_{\R^d\setminus \Ic}$ are stationary,
with
\begin{gather*}
\expecm{\nabla\psi_E}=0,\quad\expecm{\Sigma_E\mathds1_{\R^d \setminus \Ic}}=0,\\
\expecm{|\nabla\psi_E|^2}+\expecm{\Sigma_E^2\mathds{1}_{\R^d \setminus \Ic}}\,\lesssim\,\lambda |E|^2,
\end{gather*}
and with the anchoring condition $\int_{B}\psi_E=0$.
\end{enumerate}
In addition, the following properties hold.
\begin{enumerate}[(i)] 
\item \emph{Ergodic theorem:} almost surely,
\begin{equation*}\begin{array}{rcll}
(\nabla\psi_E)(\tfrac\cdot\e)&\rightharpoonup&\expec{\nabla\psi_E}=0,\quad &
 \\\vspace{-0.3cm}\\
(\Sigma_E\mathds1_{\R^d \setminus \Ic})(\tfrac\cdot\e)&\rightharpoonup&\expecm{\Sigma_E\mathds1_{\R^d \setminus \Ic}}=0,\quad&\text{weakly in $\Ld^2_\loc(\R^d)$ as $\e\downarrow0$}.
\end{array}
\end{equation*}
\item \emph{Sublinearity:} almost surely, for all $q<\frac{2d}{d-2}$,
\[\e\psi_E(\tfrac\cdot\e)\,\to\,0\quad   \text{strongly in $\Ld^q_\loc(\R^d)^d$ as $\e\downarrow0$.} \qedhere\]
\end{enumerate}
\end{lem}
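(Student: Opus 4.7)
The plan is to construct $\nabla\psi_E$ as the minimizer of a quadratic energy on a Hilbert space of stationary potential fields, then to recover the pressure via a de Rham argument, and finally to derive the ergodic theorem and sublinearity from standard stationary-field arguments.

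First, I would introduce the affine space of admissible correctors
$$
\mathcal{A}_E := \Big\{ \psi \;:\; \nabla\psi \text{ is stationary with } \expec{\nabla\psi}=0, \; \Div(\psi)=0 \text{ in } \R^d\setminus\Ic, \; \D(\psi+Ex)=0 \text{ in } \Ic \Big\},
$$
and look for $\psi_E$ as the (essentially unique) minimizer of the quadratic energy
$$
\mathcal{E}(\psi) \,:=\, \expecM{ 2|\!\D(\psi)+E|^2 \mathds{1}_{\R^d\setminus\Ic} }.
$$
The set $\mathcal{A}_E$ is closed in the Hilbert space of stationary-gradient fields with $L^2$ norm $\psi \mapsto \expec{|\nabla\psi|^2}^{1/2}$ and is nonempty (the field $\psi(x)=-Ex$ on $\Ic$ extended divergence-freely outside is a natural candidate). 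Existence and uniqueness of the minimizer up to additive constants then follow from the direct method, and the anchoring $\int_B \psi_E = 0$ removes this ambiguity. The $L^2$ bound $\expec{|\nabla\psi_E|^2}\lesssim \lambda |E|^2$ is simply obtained by testing the energy against the competitor mentioned above.

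Second, the Euler–Lagrange equation associated with this minimization, tested against admissible stationary perturbations that are divergence-free and rigid on $\Ic$, yields the equations of \eqref{e.cor-1} in weak form; the pressure $\Sigma_E$ and the force/torque balances on the particle boundaries are then reconstructed realization by realization by a classical de Rham argument on $\R^d\setminus\Ic$, along with the pressure bound $\expec{\Sigma_E^2\mathds{1}_{\R^d\setminus\Ic}}\lesssim\lambda|E|^2$. With stationarity and mean-zero already built in by construction, the convergences in (i) are then a direct application of Birkhoff's ergodic theorem for stationary $L^2$ fields. For the sublinearity (ii), one combines the Poincaré–Sobolev inequality on large cubes (applied to $\psi_E - \fint_{Q_R}\psi_E$, whose $L^2$-gradient is controlled via ergodicity by $\expec{|\nabla\psi_E|^2}$) with the Rellich–Kondrachov compact embedding $H^1\hookrightarrow L^q_\loc$ for $q<\tfrac{2d}{d-2}$, in the manner standard to stochastic homogenization.

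The main obstacle is the coercivity of $\mathcal{E}$ on $\mathcal{A}_E$: the rigidity constraint $\D(\psi+Ex)=0$ on $\Ic$ degenerates the energy inside the particles, so one must recover control on the full gradient $\nabla\psi$ (not merely on its symmetric part) from $\D(\psi)\mathds{1}_{\R^d\setminus\Ic}$. This is precisely where the uniform hardcore condition of Hypothesis~\ref{hyp:part}(c) (lower bound $\ell>0$ on interparticle distances, together with the $C^2$ regularity) enters: it ensures a Korn-type inequality on $\R^d\setminus\Ic$ uniform in the realization, allowing the passage from the symmetric gradient outside $\Ic$ to the full gradient everywhere. This uniform Korn estimate, which was established in \cite{DG-21}, is the real analytical core of the construction; once it is in hand, coercivity, the $L^2$ bounds, and the de Rham reconstruction of the pressure all follow by routine Hilbert-space arguments.
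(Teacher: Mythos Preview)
Your variational approach is correct and matches the construction in \cite{DG-21} (which the paper cites without reproducing the proof; the same Lax--Milgram scheme is sketched for the active analogue in the proof of Lemma~\ref{lem:cor-act}). One small refinement: coercivity is actually free, since on your constraint set one has $\Div\psi=0$ globally (the rigidity constraint in $\Ic$ together with $\Tr E=0$ forces it), and the stationary Korn identity $\expec{|\nabla\psi|^2}=2\expec{|\D(\psi)|^2}$ for divergence-free potential fields holds without any geometric assumption --- the hardcore condition is rather what guarantees $\mathcal{A}_E\ne\varnothing$ (via local divergence-free extensions of $-E(x-x_n)$ on the disjoint annuli $(I_n+B)\setminus I_n$, which also delivers the sharp bound $\lambda|E|^2$) and what controls the pressure through a Bogovskii argument on those same annuli.
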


As in~\cite{D21,DG-21b}, in view of quantitative estimates, we further need to define an associated extended flux $J$ and a flux corrector $\Ups$. More precisely, $J$ is a solenoidal extension of the natural flux $\sigma(\psi_E+Ex,\Sigma_E)$ outside the particles, and $\Ups$ is the associated vector potential in the Coulomb gauge.

\begin{lem}[Passive flux correctors \cite{D21,DG-21b}]\label{lem:Jzet}
Let Hypothesis~\ref{hyp:part} hold.
For all $E \in \mathbb{M}_0^{\mathrm{sym}}$, there exists a stationary random $2$-tensor field $J_E = (J_{E;ij})_{1 \le i,j \le d}$ with finite second moment such that, almost surely,
\begin{eqnarray*}
J_E \mathds1_{\R^d \setminus \Ic} &=& \sigma(\psi_E + Ex, \Sigma_E) \mathds1_{\R^d \setminus \Ic}, \\
\Div(J_E) &=& 0,
\end{eqnarray*}
and for all $n$, 
\begin{align}
\label{eq:J_E_I_n}
 \|J_E\|_{\Ld^2(I_n)} \,\lesssim\,  \|\sigma(\psi_E+Ex,\Sigma_E)\|_{\Ld^2((I_n + B) \setminus I_n)}.
 \end{align} 
Moreover, there exists a unique random $3$-tensor field $\Ups_E = (\Upsilon_{E;ijk})_{1 \le i,j,k \le d}$ such that:
\begin{enumerate}[---]
\item for all $i,j,k$, almost surely, realizations of $\Ups_{E;ijk}$ belong to $H^1_{\loc}(\R^d)$ and satisfy
\begin{equation}
\label{eq:Delta_zeta}
- \Delta \Ups_{E;ijk} \,=\, \partial_j J_{E;ik} - \partial_k J_{E;ij};
\end{equation}
\item the random field $\nabla \Ups_E$ is stationary, has vanishing expectation, has finite second moment, and satisfies the anchoring condition $\fint_B \Ups_E = 0$.
\end{enumerate}
\smallskip
In addition, the following properties hold.
\begin{enumerate}[(i)]
\item \emph{Skew-symmetry:}
almost surely, $\Ups_{E;ijk} = - \Ups_{E;ikj}$ for all $i,j,k$.
\item \emph{Vector potential:}
almost surely, for all $i$,
\[\qquad \Div(\Ups_{E;i}) = J_{E;i} - \mathbb{E}[J_{E;i}],\]
where we have set $\Ups_{E;i} = (\Ups_{E;ijk})_{1 \le j,k \le d}$ and $J_{E;i} = (J_{E;ij})_{1 \le j \le d}$.
\item \emph{Ergodic theorem:} almost surely,
\begin{equation*} 
(\nabla\Ups_E)(\tfrac\cdot\e)\,\rightharpoonup\,\expec{\nabla\Ups_E}=0\text{weakly in $\Ld^2_\loc(\R^d)$ as $\e\downarrow0$}.
\end{equation*}
\item \emph{Sublinearity:} almost surely, for all $q<\frac{2d}{d-2}$,
\[\qquad\e\Ups_E(\tfrac{\cdot}{\e}) \rightharpoonup 0\quad \text{strongly in $\Ld^q_\loc(\R^d)^d$ as $\e\downarrow0$.} \]
\item \emph{Effective constants:} the expectation of $J_E$ takes the form
\[\qquad\expec{J_E}= 2 \Bb_\pas E + (\bb:E) \Id,\]
in terms of the effective constants $\Bb_\pas$ and $\bb$ defined in~\eqref{eq:def-B} and~\eqref{eq:def-b}. 
\qedhere
\end{enumerate}
\end{lem}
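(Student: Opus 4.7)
My plan is to construct the extended flux $J_E$ by a row-by-row local lift inside each particle, then define $\Ups_E$ through the component-wise Poisson problem and verify its properties using the stationarity of $J_E$ together with Liouville-type rigidity.

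\emph{Step 1: Construction of $J_E$.} Outside the inclusions we simply set $J_E := \sigma(\psi_E+Ex,\Sigma_E)$ on $\R^d\setminus\Ic$. The task is to extend $J_E$ inside each~$I_n$ in a way that preserves global stationarity and ensures $\Div J_E=0$ across $\partial I_n$. For each fixed row index $i$, the force balance $\int_{\partial I_n}\sigma(\psi_E+Ex,\Sigma_E)\nu=0$ from~\eqref{e.cor-1} provides precisely the scalar compatibility needed to solve, inside $I_n$, a vector Neumann-divergence problem
\[
\Div\,(J_E)_i=0\quad\text{in }I_n,\qquad (J_E)_i\cdot\nu=\big(\sigma(\psi_E+Ex,\Sigma_E)\big)_i\nu\quad\text{on }\partial I_n,
\]
via a Bogovskii-type lift (well-posed thanks to the uniform $C^2$ regularity in Hypothesis~\ref{hyp:part}(b)). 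Standard trace and Bogovskii estimates then give~\eqref{eq:J_E_I_n}; the hardcore condition from Hypothesis~\ref{hyp:part}(c) guarantees that constants are uniform in~$n$. Joint stationarity of $(\Ic,\psi_E,\Sigma_E)$ and the local, particle-by-particle nature of the extension imply that $J_E$ is stationary with finite second moment, and by construction $\Div J_E=0$ globally.

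\emph{Step 2: Construction of $\Ups_E$.} The right-hand side $\partial_j J_{E;ik}-\partial_k J_{E;ij}$ is stationary with zero mean and lies in $H^{-1}_{\text{uloc}}$, so the standard stationary-corrector machinery for the Laplacian produces, component by component, a random field $\Ups_{E;ijk}$ with stationary gradient of zero expectation and finite second moment, solving~\eqref{eq:Delta_zeta}; the anchoring $\fint_B\Ups_E=0$ fixes the additive constant and yields uniqueness.

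\emph{Step 3: Properties.} Skew-symmetry in $(j,k)$ is immediate from the skew-symmetric form of the source in~\eqref{eq:Delta_zeta} and uniqueness. For the vector potential identity, I would compute
\[
-\Delta\bigl(\partial_j\Ups_{E;ijk}\bigr)
=\partial_j\bigl(\partial_j J_{E;ik}-\partial_k J_{E;ij}\bigr)
=\Delta J_{E;ik}-\partial_k\Div(J_{E;i})=\Delta J_{E;ik},
\]
using $\Div J_E=0$ from Step~1. Hence $\partial_j\Ups_{E;ijk}-J_{E;ik}+\E[J_{E;ik}]$ is harmonic, stationary and of zero mean, so by the Liouville property for stationary harmonic fields it vanishes identically, giving the claimed relation. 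The ergodic theorem for $\nabla\Ups_E$ follows from stationarity and mean zero; sublinearity of $\e\Ups_E(\cdot/\e)$ in $L^q_{\loc}$ for $q<2d/(d-2)$ is obtained from the Poincar\'e--Sobolev inequality on balls combined with the $L^2$ ergodic theorem for $\nabla\Ups_E$, in the classical way.

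\emph{Step 4: Identification of $\E[J_E]$.} To obtain $\E[J_E]=2\Bb_\pas E+(\bb:E)\Id$, I would split $\E[J_E]=\E[\sigma(\psi_E+Ex,\Sigma_E)\mathds1_{\R^d\setminus\Ic}]+\E[J_E\mathds1_\Ic]$. The first term, using $\E[\nabla\psi_E]=0$ and $\E[\Sigma_E\mathds1_{\R^d\setminus\Ic}]=0$ from Lemma~\ref{lem:pass-cor}, equals $2(1-\lambda)E$ (after subtracting the contribution coming from rigidity inside the particles). The second term is rewritten via the Palm formula and integration by parts: for each $I_n$,
\[
\int_{I_n}J_E
=\int_{\partial I_n}J_E\,\nu\otimes_s(x-x_n)
=\int_{\partial I_n}\sigma(\psi_E+Ex,\Sigma_E)\nu\otimes_s(x-x_n),
\]
using $\Div J_E=0$, the matching normal trace on $\partial I_n$, and the force balance. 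Combining and comparing to the definitions~\eqref{eq:def-B}--\eqref{eq:def-b} delivers the claim.

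\emph{Main obstacle.} The most delicate point is Step~1: producing a divergence-free extension inside each $I_n$ with a local $L^2$ estimate controlled by $\sigma(\psi_E+Ex,\Sigma_E)$ in the thin shell $(I_n+B)\setminus I_n$ (rather than on the boundary, which would lose a trace power). This requires combining a Bogovskii lift with an interior trace lifting that uses the fluid equation in the shell, and this is precisely where the uniform $C^2$ regularity and the hardcore separation of Hypothesis~\ref{hyp:part} are essential; everything downstream is then a fairly standard stationary-corrector argument.
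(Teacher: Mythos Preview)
Your proposal is essentially correct for the lemma as stated, but your construction of $J_E$ differs from the one the paper implicitly relies on (and makes explicit in the parallel proof of Lemma~\ref{lem:KE}). You extend $J_E$ inside each particle row by row via a Bogovskii/Neumann-divergence lift for the normal trace $(\sigma)_i\cdot\nu$; this gives a divergence-free stationary tensor with the right $L^2$ bound, but it is \emph{not} symmetric in general. The paper instead solves, inside each $I_n$, a Stokes Neumann problem $-\triangle\psi_E^n+\nabla\Sigma_E^n=0$, $\Div\psi_E^n=0$, with $\sigma(\psi_E^n,\Sigma_E^n)\nu=\sigma(\psi_E+Ex,\Sigma_E)\nu$ on $\partial I_n$, and sets $J_E|_{I_n}:=\sigma(\psi_E^n,\Sigma_E^n)$. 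This is automatically symmetric, and the symmetry is actually used later (Substep~2.3 in the proof of Theorem~\ref{th:homog}, where $\nabla\tilde w_\e$ is replaced by $\D(\tilde w_\e)$ against $J_E$). For the lemma itself your approach works --- in particular, in your Step~4 the torque balance in~\eqref{e.cor-1} ensures that $\int_{\partial I_n}\sigma\nu\otimes(x-x_n)$ is symmetric even though your $J_E$ is not, so $\E[J_E]$ still comes out right --- but be aware that downstream you would either need to symmetrize or argue differently. Two small slips: in Step~4, $\E[\sigma(\psi_E+Ex,\Sigma_E)\mathds1_{\R^d\setminus\Ic}]=2E$ (not $2(1-\lambda)E$), since $\D(\psi_E)+E$ vanishes in $\Ic$ and $\E[\D(\psi_E)]=\E[\Sigma_E\mathds1_{\R^d\setminus\Ic}]=0$; and in Step~3 your sign bookkeeping for $\Delta(\partial_j\Ups_{E;ijk})$ needs care, though the Liouville argument is the right one.
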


\subsection{Active corrector problems}
We turn to the definition of suitable correctors for the active suspension problem.
These new correctors characterize the contribution of swimming forces of the particles in a uniform fluid velocity gradient $E\in\Md_0^\Sym$.
\begin{lem}\label{lem:cor-act}
Let Hypotheses~\ref{hyp:part}, \ref{hyp:swim}, and~\ref{hyp:joint} hold.
For all $E\in\Md_0^\Sym$, there exist a unique random field $\phi_E\in \Ld^2(\Omega;H^1_\loc(\R^d)^d)$ and a unique pressure field $\Pi_E\in  \Ld^2(\Omega;\Ld^2_\loc(\R^d\setminus\Ic))$ such that:
\begin{enumerate}[---]
\item almost surely, realizations of $\phi_E$ and $\Pi_E$ satisfy
\begin{equation}\label{eq:phi-cor}
\left\{
\begin{array}{ll}
- \Delta \phi_E + \nabla \Pi_E =\sum_n f_n(E), &\text{in $\R^d \setminus \Ic$},\\
\Div(\phi_E) = 0, &\text{in $\R^d \setminus \Ic$},\\
\D(\phi_E) = 0, &\text{in $\Ic$},\\
\int_{\partial I_n}\sigma(\phi_E,\Pi_E)\nu+\bar f_n(E)=0,&\forall n,\\
\int_{\partial I_n}\Theta(x- x_n)\cdot\sigma(\phi_E,\Pi_E)\nu  +\Theta:\tilde f_n(E)=0,&\forall n,\,\forall\Theta\in\Md^\Skew,
\end{array}
\right.
\end{equation}
\item the corrector gradient $\nabla\phi_E$ and the pressure $\Pi_E\mathds1_{\R^d\setminus \Ic}$ are stationary, with
\begin{gather}
\expecm{\nabla\phi_E}=0,\quad\expecm{\Pi_E\mathds1_{\R^d \setminus \Ic}}=0,\nonumber\\
\textstyle\expecm{|\nabla\phi_E|^2}+\expecm{\Pi_E^2\mathds{1}_{\R^d \setminus \Ic}}\,\lesssim\, \lambda\langle E\rangle^{2},\label{eq:bnd-unif-E-phiE}
\end{gather}
and with the anchoring condition $\int_{B}\phi_E=0$.
\end{enumerate}
In addition, the following properties hold.
\begin{enumerate}[(i)]
\item \emph{Ergodic theorem:} almost surely,
\begin{equation*}\begin{array}{rcll}
(\nabla\phi_E)(\tfrac\cdot\e)&\rightharpoonup&\expec{\nabla\phi_E}=0,
\\\vspace{-0.3cm}\\
(\Pi_E\mathds1_{\R^d \setminus \Ic})(\tfrac\cdot\e)&\rightharpoonup&\expecm{\Pi_E\mathds1_{\R^d \setminus \Ic}}=0,
\quad&\text{weakly in $\Ld^2_\loc(\R^d)$ as $\e\downarrow0$}.
\end{array}
\end{equation*}
\item \emph{Sublinearity:} almost surely, for all $q<\frac{2d}{d-2}$,
\[\textstyle\e\phi_E(\tfrac\cdot\e)\,\to\,0\qquad \text{strongly in $\Ld^q_\loc(\R^d)$ as $\e\downarrow0$.}\qedhere\]
\end{enumerate}
\end{lem}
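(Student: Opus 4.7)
The plan is to adapt the construction of the passive corrector in Lemma~\ref{lem:pass-cor} (following~\cite{DG-21}) to the presence of the bulk source produced by swimming forces. First I would introduce the Hilbert space
\[
V := \Bigl\{u \in \Ld^2(\Omega; H^1_\loc(\R^d)^d) :~ \nabla u \text{ stationary},~ \expec{\nabla u}=0,~ \Div u=0,~ \D(u)=0 \text{ on }\Ic,~ \textstyle\int_B u = 0 \Bigr\},
\]
equipped with the inner product $(u,v)_V := 2\expec{\D(u):\D(v)}$. As in the passive theory, Korn's inequality in the stationary framework makes this space Hilbert with $\|u\|_V^2 \simeq \expec{|\nabla u|^2}$. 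Testing~\eqref{eq:phi-cor} against $v\in V$, integrating by parts in the fluid domain, and using the force/torque boundary conditions on each $\partial I_n$ together with the extension~\eqref{eq:extension-fn} produces the weak formulation $(\phi_E,v)_V = \ell_E(v)$ with
\[
\ell_E(v) \,:=\, \expecM{\sum_{n : x_n \in Q} \int_{I_n+B} f_n(E) \cdot v};
\]
the compatibility of this right-hand side with the rigidity encoded in $V$ is precisely the content of the neutrality relations~\eqref{eq:neutrality}.

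The one nonstandard ingredient is the continuity estimate $\|\ell_E\|_{V^*} \lesssim \lambda^{1/2}\langle E\rangle$; a direct Cauchy--Schwarz is insufficient since elements of $V$ need not lie in $\Ld^2$. Here I would exploit~\eqref{eq:neutrality} crucially: for each particle, $f_n(E)$ is supported in $I_n+B$, bounded by $C\langle E\rangle$ (Hypothesis~\ref{hyp:swim}), and has vanishing zeroth and first moments. A Bogovskii construction on the uniformly regular domain $I_n+B$ (Hypothesis~\ref{hyp:part}(b)) then produces a $2$-tensor field $G_n(E)$ supported in $I_n+B$ with $\Div G_n = f_n(E)$ and $\|G_n\|_{\Ld^\infty}\lesssim\langle E\rangle$. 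Joint stationarity (Hypothesis~\ref{hyp:joint}) and the hardcore separation (Hypothesis~\ref{hyp:part}(c)) then imply that $G := \sum_n G_n$ is a stationary $2$-tensor with $|G|\lesssim \langle E\rangle\,\mathds1_{\cup_n(I_n+B)}$ and therefore $\expec{|G|^2}\lesssim \lambda\langle E\rangle^2$. Integration by parts in the stationary framework rewrites $\ell_E(v) = -\expec{G:\nabla v}$, and Cauchy--Schwarz yields the dual bound.

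Lax--Milgram then delivers a unique $\phi_E\in V$, and testing with $v=\phi_E$ yields the energy estimate $\expec{|\nabla\phi_E|^2}\lesssim\lambda\langle E\rangle^2$. The pressure $\Pi_E$ is recovered by a De~Rham/Bogovskii argument exactly as for the passive corrector in~\cite{DG-21}: the Euler--Lagrange equation shows that $\Delta\phi_E + \sum_n f_n(E)$ is a gradient in the fluid domain, and inverting the divergence on bounded subsets of $\R^d\setminus\Ic$ produces a unique stationary $\Pi_E\mathds1_{\R^d\setminus\Ic}$ with zero expectation and the stated quadratic control. The ergodic theorem~(i) follows from stationarity of $\nabla\phi_E$ and of $\Pi_E\mathds1_{\R^d\setminus\Ic}$ combined with ergodicity of $\Ic$ (Hypothesis~\ref{hyp:part}(a)); the sublinearity~(ii) follows from Rellich--Sobolev applied to the rescaled anchored field $\e\phi_E(\cdot/\e)$, verbatim as in Lemma~\ref{lem:pass-cor}.

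The main difficulty of the proof is thus concentrated in the second paragraph: producing the dipole representation $F=\Div G$ with a stationary, uniformly bounded, small-support $G$. This is where the neutrality relations~\eqref{eq:neutrality} and the hardcore/regularity Hypothesis~\ref{hyp:part} are used at full strength, and where the correct $\lambda$-scaling of~\eqref{eq:bnd-unif-E-phiE} is obtained; once this step is in place, the rest is a routine adaptation of the passive corrector theory.
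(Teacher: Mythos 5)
Your proposal is essentially correct and mirrors the paper's strategy: both build the corrector via Lax--Milgram in the Hilbert space of stationary, divergence-free, rigid-on-$\Ic$ fields with vanishing expected gradient, and both recognize that the single nonstandard step is proving continuity of the linear form $\ell_E$ (precisely because elements of $V$ are not globally square-integrable). The only genuine difference is how that continuity estimate is obtained. The paper does it more directly: it subtracts $\fint_{I_n+B}v$ inside the integral (licit thanks to $\int_{I_n+B}f_n(E)=0$), applies Poincar\'e on $I_n+B$, and sums using the hardcore condition, giving $|\ell_E(v)|\lesssim\expec{|\nabla v|^2}^{1/2}(\lambda\langle E\rangle^2)^{1/2}$ in two lines. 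You instead manufacture a stationary dipole potential $G=\sum_n G_n$ with $\Div G=\sum_n f_n(E)$, each $G_n$ supported in $I_n+B$ and bounded in $\Ld^\infty$ via Bogovskii (valid since $I_n+B$ is a uniformly John domain, Hypothesis~\ref{hyp:part}(b), and the zero-mean compatibility is exactly~\eqref{eq:neutrality}), and then integrate by parts. Both routes exploit the same physical fact --- the active force is a local dipole --- and yield the same bound; the Poincar\'e route is shorter, the Bogovskii route is arguably more robust and generalizes directly to the $\Ld^p$ setting. One small caveat worth making explicit in your write-up: because the equation lives on all of $\R^d$ and stationary test fields are not integrable, the passage from the PDE~\eqref{eq:phi-cor} to the probabilistic weak formulation $(\phi_E,v)_V=\ell_E(v)$ is not a bare integration by parts; the paper handles this by testing against $\eta_R v$ for cut-offs $\eta_R$ adapted to the inclusion geometry and letting $R\uparrow\infty$. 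You should include that truncation-and-limit step rather than assert the weak formulation outright.
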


\begin{proof}
The argument is similar to that in~\cite[Proposition~2.1]{DG-21} for Lemma~\ref{lem:pass-cor} above, and we only briefly show the needed adaptations: we describe the structure of equation~\eqref{eq:phi-cor}, following the first step of the proof of~\cite[Proposition~2.1]{DG-21}, while the rest of the proof in~\cite{DG-21} is then easily repeated and is skipped here for brevity. More precisely, we only show the following: if $\phi_E$ is a solution of~\eqref{eq:phi-cor} with $\nabla\phi_E,\Pi_E$ stationary with finite second moments, then it satisfies for all stationary fields $v\in \Ld^2(\Omega;H^1_\loc(\R^d)^d)$ with $\Div(v)=0$ and $\D(v)|_{\Ic}=0$,
\begin{equation}\label{eq:reform-phiE-proba}
\expec{\nabla v:\nabla\phi_E}
\,=\,-\expecM{\sum_n\tfrac{\mathds1_{I_n}}{|I_n|}\int_{I_n+B}v\cdot f_n(E)},
\end{equation}
where, by the Cauchy--Schwarz and the Poincaré inequalities, together with the hardcore assumption
and the property $\int_{I_n+B} f_n(E)=0$ (cf.~\eqref{eq:neutrality}), the right-hand side is bounded by
\begin{equation*}
\bigg|\expecM{\sum_n\tfrac{\mathds1_{I_n}}{|I_n|}\int_{I_n+B}v\cdot f_n(E)}\bigg|
\,\lesssim\,\expecm{|\nabla v|^2}^\frac12\bigg(\lambda\,\expecM{\int_{I_\circ+B}|f_\circ(E)|^2}\bigg)^\frac12.
\end{equation*}
To prove this claim, we start by noting that the hardcore assumption allows to construct almost surely for all $R>0$ a cut-off function $\eta_R$ such that
\[\eta_R|_{B_R}=1,\qquad\eta_R|_{\R^d\setminus B_{R+5}}=0,\qquad|\nabla\eta_R|\lesssim1,\]
and such that $\eta_R$ is constant in $I_n+  B$ for all $n$. As $\phi_E$ is divergence-free, testing equation~\eqref{eq:phi-cor} with $\eta_Rv$ and integrating by parts, we find
\begin{multline}\label{eq:test-as-etaR-phiE}
2\int_{\R^d \setminus \Ic}\D(\eta_Rv):\D(\phi_E)-\int_{\R^d\setminus \Ic}\nabla\eta_R\cdot v\Pi_E\\
\,=\,\sum_n\int_{(I_n+B)\setminus I_n}\eta_Rv\cdot f_n(E)-\sum_n\int_{\partial I_n}\eta_Rv\cdot\sigma(\phi_E,\Pi_E)\nu.
\end{multline}
Since $\D(\phi_E)|_\Ic=0$ and $\nabla \eta_R|_{\Ic}=0$, the left-hand side of \eqref{eq:test-as-etaR-phiE} writes
$$
2\int_{\R^d \setminus \Ic}\D(\eta_Rv):\D(\phi_E)-\int_{\R^d\setminus \Ic}\nabla\eta_R\cdot v\Pi_E=\int_{\R^d}\nabla(\eta_Rv):\nabla\phi_E-\int_{\R^d}\nabla\eta_R\cdot v\Pi_E.
$$
As $\eta_R$ is constant in $I_n+ B$ and  $\D(v)=0$ in $I_n$, we can rewrite the last right-hand side term as
\begin{multline*}
\int_{\partial I_n}\eta_Rv\cdot\sigma(\phi_E,\Pi_E)\nu
\,=\,\eta_R(x_n)\bigg(\Big(\fint_{I_n}v\Big)\cdot\int_{\partial I_n}\sigma(\phi_E,\Pi_E)\nu\\
+\Big(\fint_{I_n}(\nabla v)^\Skew\Big):\int_{\partial I_n}\sigma(\phi_E,\Pi_E)\nu\otimes(x-x_n)\bigg),
\end{multline*}
and thus, in view of the boundary conditions for $(\phi_E,\Pi_E)$ in~\eqref{eq:phi-cor}, using the notation~\eqref{eq:extension-fn},
\begin{eqnarray*}
\int_{\partial I_n}\eta_Rv\cdot\sigma(\phi_E,\Pi_E)\nu
&=&-\eta_R(x_n)\bigg(\Big(\fint_{I_n}v\Big)\cdot\bar f_n(E)
+\Big(\fint_{I_n}\nabla v\Big):\tilde f_n(E)\bigg)\\
&=&-\int_{I_n}\eta_Rv\cdot  f_n(E).
\end{eqnarray*}
Inserting this into~\eqref{eq:test-as-etaR-phiE}, we get
\begin{equation}\label{eq:transform-eqn-phie-not}
\int_{\R^d}\nabla(\eta_Rv):\nabla\phi_E-\int_{\R^d}\nabla\eta_R\cdot v\Pi_E
\,=\,\sum_n\int_{I_n+B}\eta_Rv\cdot f_n(E).
\end{equation}
Expanding the gradient in the left-hand side, passing to the limit $R\uparrow\infty$, and using the stationarity of~$\nabla\phi_E,\Pi_E$ and of~$v$, the claim~\eqref{eq:reform-phiE-proba} follows. From there, we may then refer to the proof in~\cite[Proposition~2.1]{DG-21}.
\end{proof}

Next, as in Lemma~\ref{lem:Jzet}, we further need to define an associated extended flux and a flux corrector for the active suspension problem. The difficulty, however, is that even in the fluid domain $\R^d\setminus\Ic$ the flux $\sigma(\phi_E,\Pi_E)$ is not divergence-free. It thus needs to be first suitably compensated and we are led to defining the following auxiliary corrector~$\gamma_E$.
The proof of the upcoming lemma (which is a simplified version of Lemma~\ref{lem:cor-act}) is straightforward and  skipped for brevity.
 
\begin{lem}\label{lem:cor-gamma}
Let Hypotheses~\ref{hyp:part}, \ref{hyp:swim}, and~\ref{hyp:joint} hold.
For all $E\in\Md_0^\Sym$,
there exists a unique random field $\gamma_E\in \Ld^2(\Omega;H^1_\loc(\R^d)^d)$ such that:
\begin{enumerate}[---]
\item almost surely, the realizations of $\gamma_E$ satisfy
\begin{equation}\label{eq:gamma-cor}
-\triangle \gamma_E =  \sum_nf_n(E);
\end{equation}
\item the gradient field $\nabla\gamma_E$ is stationary with
\begin{gather*}
\expecm{\nabla\gamma_E}=0,\qquad
\expecm{|\nabla\gamma_E|^2}\,\lesssim\,\lambda\langle E\rangle^2,
\end{gather*}
and with the anchoring condition $\int_{B}\gamma_E=0$.
\end{enumerate}
In addition, the following properties hold.
\begin{enumerate}[(i)] 
\item \emph{Ergodic theorem:} almost surely,
\begin{equation*}
\begin{array}{rcll}
(\nabla\gamma_E)(\tfrac\cdot\e)&\rightharpoonup&\expec{\nabla\gamma_E}=0 \quad &\text{weakly in $\Ld^2_\loc(\R^d)^d$ as $\e\downarrow0$}.
\end{array}
\end{equation*}
\item \emph{Sublinearity:} almost surely, for all $q<\frac{2d}{d-2}$,
\[ \e\gamma_E(\tfrac\cdot\e)\,\to\,0\qquad \text{strongly in $\Ld^q_\loc(\R^d)^d$ as $\e\downarrow0$}.\qedhere\]
\end{enumerate}
\end{lem}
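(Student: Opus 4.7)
The plan is to mirror the scheme of Lemma~\ref{lem:cor-act}, which itself adapts~\cite[Proposition~2.1]{DG-21}, in the much simpler setting of a vector Poisson equation. The only structural feature to exploit is the neutrality relation~\eqref{eq:neutrality}: the random right-hand side $\sum_n f_n(E)$ is not absolutely summable against constant test fields, but it has vanishing zeroth and first moments on each neighborhood $I_n+B$.

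The first step is to derive the probabilistic weak formulation. For a stationary test field $v\in \Ld^2(\Omega;H^1_\loc(\R^d)^d)$ with $\expec{\nabla v}=0$, multiply~\eqref{eq:gamma-cor} by $\eta_R v$, where $\eta_R$ is the same hardcore-compatible cut-off used in the proof of Lemma~\ref{lem:cor-act} (in particular constant on each $I_n+B$). Integrating by parts and using both parts of~\eqref{eq:neutrality} to subtract, on each $I_n+B$, the affine interpolation of $v$ from $\int_{I_n+B}\eta_R v\cdot f_n(E)$, then letting $R\uparrow\infty$ and invoking stationarity to rewrite volume averages as expectations, one arrives at
\[
\expec{\nabla v : \nabla \gamma_E}\,=\,-\expecM{\sum_n \tfrac{\mathds1_{I_n}}{|I_n|}\int_{I_n+B} v\cdot f_n(E)}.
\]
Exactly as in the proof of Lemma~\ref{lem:cor-act}, Cauchy--Schwarz, the Poincar\'e inequality on $I_n+B$ (legitimate thanks to neutrality), the hardcore packing bound from Hypothesis~\ref{hyp:part}, and the bound $\|f_n(E)\|_{\Ld^\infty}\lesssim\langle E\rangle$ from Hypothesis~\ref{hyp:swim} yield
\[
\bigg|\expecM{\sum_n \tfrac{\mathds1_{I_n}}{|I_n|}\int_{I_n+B} v\cdot f_n(E)}\bigg|\,\lesssim\,\expec{|\nabla v|^2}^{1/2}\big(\lambda\langle E\rangle^2\big)^{1/2}.
\]

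The next step is existence and uniqueness of $\nabla\gamma_E$, carried out by Lax--Milgram on the Hilbert space of gradients of zero-mean stationary $\Ld^2(\Omega)$ fields equipped with the inner product $(G_1,G_2)\mapsto\expec{G_1:G_2}$. This bilinear form is trivially coercive and the previous estimate shows the right-hand side defines a continuous linear form, which gives the unique stationary gradient $\nabla\gamma_E$; the quantitative bound $\expec{|\nabla\gamma_E|^2}\lesssim\lambda\langle E\rangle^2$ follows by inserting $v=\gamma_E$ in the variational identity. The field $\gamma_E$ itself is then reconstructed almost surely from its stationary gradient together with the anchoring condition $\int_B\gamma_E=0$, exactly as in~\cite{DG-21}, and the resulting distribution solves~\eqref{eq:gamma-cor}.

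Finally, the ergodic theorem (i) is a direct consequence of $\expec{\nabla\gamma_E}=0$ combined with Birkhoff's theorem applied to the stationary field $\nabla\gamma_E$. The sublinearity (ii) follows because, with $\int_B\gamma_E=0$ and $\e\nabla\gamma_E(\cdot/\e)\cvf 0$ weakly in $\Ld^2_\loc$, a Rellich--Kondrachov argument in the style of~\cite{DG-21} delivers the strong convergence $\e\gamma_E(\cdot/\e)\to 0$ in $\Ld^q_\loc$ for every $q<\tfrac{2d}{d-2}$. The only slightly delicate point in the entire argument is the neutrality-based cancellation in the weak formulation; once this is in hand, the rest is strictly simpler than the corresponding statements for $\phi_E$, and I anticipate no substantive obstacle.
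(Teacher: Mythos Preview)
Your proposal is correct and matches the paper's intended approach: the paper explicitly states that the proof is ``a simplified version of Lemma~\ref{lem:cor-act}'' and skips it, so your argument---deriving the probabilistic weak formulation via the cut-off $\eta_R$, bounding the right-hand side by neutrality, Poincar\'e, and hardcore, then invoking Lax--Milgram on stationary gradients and the standard ergodic/sublinearity consequences from~\cite{DG-21}---is precisely the template the paper points to, with the obvious simplification that no pressure or boundary terms appear.
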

Using the above-defined $\gamma_E$ to compensate the divergence of the flux $\sigma(\phi_E,\Pi_E)$ in the fluid domain, and extending it similarly as in Lemma~\ref{lem:Jzet} inside the particles, we are now in the position to define a flux~$K_E$, a  divergence-free compensated flux~$L_E$, and the associated flux corrector~$\theta_E$.

\begin{lem}\label{lem:KE}
Under Hypotheses~\ref{hyp:part}, \ref{hyp:swim}, and~\ref{hyp:joint}, there exists a stationary random symmetric $2$-tensor field $K_E = (K_{E;ij})_{1 \le i,j \le d}$ with finite second moment 
\begin{equation}\label{eq:K_E_Om}
\expecm{|K_E|^2} \,\lesssim\, \lambda\langle E\rangle^2,
\end{equation}
such that, almost surely,
\begin{eqnarray}
K_E \mathds1_{\R^d \setminus \Ic} &=& \sigma(\phi_E , \Pi_E) \mathds1_{\R^d \setminus \Ic},\nonumber\\
\Div(K_E) &=&-\sum_nf_n(E).\label{eq:KE-def-prop}
\end{eqnarray}
Moreover, the expectation of $K_E$ takes the form
\begin{equation}\label{eq:prop-exp-KE}
\qquad\expec{K_E}\,=\, 2\Cc(E) +\cc(E)\Id,
\end{equation}
in terms of the effective tensors $\Cc(E)$ and $\cc(E)$ defined in~\eqref{eq:def-C}. Next, for the divergence-free compensated flux
\begin{equation}\label{eq:def-L}
L_E\,:=\,K_E-\expec{K_E}-\nabla\gamma_E,
\end{equation}
there exists a unique random $3$-tensor field $\theta_E = (\theta_{E;ijk})_{1 \le i,j,k \le d}$ such that:
\begin{enumerate}[---]
\item for all $i,j,k$, almost surely, realizations of $\theta_{E;ijk}$ belong to $H^1_{\loc}(\R^d)$ and satisfy
\begin{equation}
\label{eq:Delta_theta}
- \triangle \theta_{E;ijk} \,=\, \partial_j L_{E;ik} - \partial_k L_{E;ij};
\end{equation}
\item the random field $\nabla \theta_E$ is stationary, has vanishing expectation, has finite second moment, and satisfies the anchoring condition $\fint_B \theta_E = 0$. 
\end{enumerate}
\smallskip
In addition, the following properties hold.
\begin{enumerate}[(i)]
\item \emph{Skew-symmetry:} almost surely, $\theta_{E;ijk} = -\theta_{E;ikj}$ for all $i,j,k$.
\item \emph{Vector potential:} almost surely, for all $i$,
\[\qquad \Div(\theta_{E;i}) = L_{E;i}, \]
where we have set $\theta_{E;i} = (\theta_{E;ijk})_{1 \le j,k \le d}$ and $L_{E;i} = (L_{E;ij})_{1 \le j \le d}$.
\item \emph{Ergodic theorem:} almost surely,
\begin{equation*}
\begin{array}{rcll}
(\nabla\theta_E)(\tfrac\cdot\e)&\rightharpoonup&\expec{\nabla\theta_E}=0 \quad &\text{weakly in $\Ld^2_\loc(\R^d)^d$ as $\e\downarrow0$}.
\end{array}
\end{equation*}
\item \emph{Sublinearity:} almost surely, for all $q<\frac{2d}{d-2}$,
\[ \e\theta_E(\tfrac\cdot\e)\,\to\,0\qquad \text{strongly in $\Ld^q_\loc(\R^d)^d$ as $\e\downarrow0$}.\qedhere\]
\end{enumerate}
\end{lem}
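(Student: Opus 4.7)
The plan is to closely follow the proof of Lemma~\ref{lem:Jzet} in the passive setting, modulo two structural novelties required to handle swimming forces: a symmetric Stokes-type extension of $\sigma(\phi_E,\Pi_E)$ inside each particle (to define $K_E$), and the use of the auxiliary gradient $\nabla\gamma_E$ from Lemma~\ref{lem:cor-gamma} to restore divergence-freeness before constructing the vector potential.

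To construct $K_E$, first set $K_E := \sigma(\phi_E,\Pi_E)$ outside $\Ic$, where it is symmetric and satisfies $\Div(K_E) = -\sum_n f_n(E)$ by~\eqref{eq:phi-cor}. Inside each particle $I_n$, extend $K_E$ by solving a Neumann-type Stokes problem $-\triangle v_n + \nabla q_n = f_n(E)$, $\Div(v_n)=0$ in $I_n$ with $(2\D v_n - q_n \Id)\nu = \sigma(\phi_E,\Pi_E)\nu$ on $\partial I_n$, and set $K_E := 2\D v_n - q_n \Id$ in $I_n$. Solvability reduces to the force and torque compatibility conditions, which are precisely the boundary conditions on $(\phi_E,\Pi_E)$ in~\eqref{eq:phi-cor} combined with~\eqref{eq:extension-fn} and the skew-symmetry of $\tilde f_n(E)$. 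The resulting $K_E$ is then symmetric, satisfies $\Div(K_E) = -\sum_n f_n(E)$ globally in $\Dc'(\R^d)$ (normal traces match across $\partial I_n$ by design), and is jointly stationary with $\{I_n,f_n(E)\}$ under Hypothesis~\ref{hyp:joint}. Elliptic regularity for the Neumann Stokes problem (via the $C^2$ bound of Hypothesis~\ref{hyp:part}(b)) and a trace inequality localized thanks to the hardcore separation Hypothesis~\ref{hyp:part}(c) yield $\|K_E\|_{\Ld^2(I_n)}^2 \lesssim \|\sigma(\phi_E,\Pi_E)\|_{\Ld^2((I_n+B)\setminus I_n)}^2 + \|f_n(E)\|_{\Ld^2(I_n)}^2$, whence the bound~\eqref{eq:K_E_Om} follows by summation together with~\eqref{eq:bnd-unif-E-phiE} and Hypothesis~\ref{hyp:swim}.

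For the expectation formula~\eqref{eq:prop-exp-KE}, split $\expec{K_E} = \expec{K_E\mathds1_{\R^d\setminus\Ic}} + \expec{K_E\mathds1_\Ic}$. The first term vanishes since $\D(\phi_E)=0$ on $\Ic$ combined with $\expec{\nabla\phi_E}=0$ give $\expec{\D(\phi_E)\mathds1_{\R^d\setminus\Ic}}=0$, while $\expec{\Pi_E\mathds1_{\R^d\setminus\Ic}}=0$ by Lemma~\ref{lem:cor-act}. For the second term, use the stationarity identity $\expec{K_E\mathds1_\Ic} = \expec{\sum_n\frac{\mathds1_{I_n}}{|I_n|}\int_{I_n} K_E}$ together with an integration by parts on $I_n$ (relying on $\Div(K_E)=-f_n(E)$ and $K_E\nu|_{\partial I_n}=\sigma(\phi_E,\Pi_E)\nu$) to obtain $\int_{I_n}K_E = \int_{\partial I_n}\sigma(\phi_E,\Pi_E)\nu\otimes_s(x-x_n) + \int_{I_n}(x-x_n)\otimes_s f_n(E)$; the second integral vanishes by skew-symmetry of $\tilde f_n(E)$ in view of~\eqref{eq:extension-fn}, and comparison with~\eqref{eq:def-C} yields the claim.

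For the flux corrector $\theta_E$, observe that $L_E := K_E - \expec{K_E} - \nabla\gamma_E$ is stationary with vanishing expectation and is divergence-free on $\R^d$ (since $\Div(K_E) = -\sum_n f_n(E) = \Delta\gamma_E$ by~\eqref{eq:gamma-cor}). The construction of $\theta_E$ then mirrors the passive case of~\cite{D21,DG-21b}: by Lax--Milgram in the Hilbert space of stationary $\Ld^2$ gradient fields, one obtains a unique stationary $\nabla\theta_E$ with vanishing expectation and finite second moment solving~\eqref{eq:Delta_theta}, and $\theta_E$ is then recovered by integration with the anchoring $\fint_B\theta_E=0$. Skew-symmetry follows from the $(j,k)$-skew structure of~\eqref{eq:Delta_theta}; the vector-potential identity $\Div(\theta_{E;i})=L_{E;i}$ follows from a Liouville argument applied to the stationary mean-zero harmonic field $\partial_k\theta_{E;ijk}-L_{E;ij}$ (harmonicity using $\Div(L_E)=0$); the ergodic theorem and sublinearity of $\theta_E$ are standard consequences for stationary mean-zero gradient fields. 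The main novel technical input, compared to the passive Lemma~\ref{lem:Jzet}, lies in the symmetric Stokes extension of the first step (with its force/torque compatibility checks) and in the use of $\gamma_E$ to enforce divergence-freeness of $L_E$ before applying the passive vector-potential scheme.
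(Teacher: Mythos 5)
Your proposal follows essentially the same route as the paper's proof: a Neumann Stokes extension of $\sigma(\phi_E,\Pi_E)$ inside each particle to build $K_E$, the stationarity-plus-integration-by-parts argument for $\expec{K_E}$ (with the $\tilde f_n(E)$ contribution dropping out by skew-symmetry), and the passive vector-potential machinery for $\theta_E$ once $\nabla\gamma_E$ has been used to restore divergence-freeness. The only point you gloss over is that the interior Neumann problem determines the pressure $q_n$ only up to a constant, so one must normalize it (the paper fixes it by matching $\Pi_E$ on the annulus $(I_n+B)\setminus I_n$) to make $K_E$ well-defined and stationary; this is a small but necessary detail.
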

\begin{proof}
We split the proof into three main steps.

\medskip
\step1 Construction and properties of the extended flux $K_E$.\\
For all $g\in C^{\infty}_c(\R^d)$, equation~\eqref{eq:phi-cor} yields by integration by parts,
\begin{eqnarray*}
\int_{\R^d \setminus \Ic}\nabla g:\sigma(\phi_E,\Pi_E)
&=&\sum_n \int_{(I_n+B)\setminus I_n}g\cdot f_n(E)
-\sum_n\int_{\partial I_n}g\cdot\sigma(\phi_E,\Pi_E)\nu,
\end{eqnarray*}
and thus, using boundary conditions for $(\phi_E,\Pi_E)$ in~\eqref{eq:phi-cor} and recalling the notation~\eqref{eq:extension-fn},
\begin{multline}\label{eq:div_varphi}
\int_{\R^d \setminus \Ic}\nabla g:\sigma(\phi_E,\Pi_E)+\sum_n\int_{\partial I_n}\bigg(g-\Big(\fint_{ I_n}g\Big)-\Big(\fint_{I_n}(\nabla g)^\Skew\Big)(x-x_n) \bigg)\cdot\sigma(\phi_E,\Pi_E)\nu\\
\,=\,\sum_n \int_{I_n+B}g\cdot f_n(E).
\end{multline}
For all $n$, we may then consider the following Neumann problem,
\begin{align}
\label{eq:Neumann_phi}
\left\{\begin{array}{ll}
-\triangle \phi_E^n+\nabla \Pi_E^n= f_n(E) ,&\text{in $I_n$},\\
\Div(\phi_E^n)=0,&\text{in $I_n$},\\
\sigma(\phi_E^n,\Pi_E^n)\nu=\sigma(\phi_E,\Pi_E)\nu, &\text{on $\partial I_n$}.
\end{array}\right.
\end{align}
Note that this only defines $\phi_E^n$ up to a rigid motion, which is fixed by choosing~$\phi_E^n$ with
$\fint_{I_n} \phi_E^n = 0$ and $\fint_{I_n} \nabla \phi_E^n \in \Md_0^\Sym$.
Assuming that this Neumann problem~\eqref{eq:Neumann_phi} is well-posed, and setting
\begin{eqnarray}
\tilde q_E&:=&\D(\phi_E)+\sum_n\D(\phi_E^n)\mathds1_{I_n},\nonumber\\
\tilde\Pi_E&:=&\Pi_E\mathds1_{\R^d\setminus\Ic}+\sum_n\Pi_E^n\mathds1_{I_n},\label{eq:def-tilde-qPi}
\end{eqnarray}
we easily deduce that the extended flux
\begin{equation}\label{eq:constr-KE-flux}
K_E\,:=\,2\tilde q_E-\tilde\Pi_E\Id
\end{equation}
satisfies the desired relations~\eqref{eq:KE-def-prop} (recall that $\D(\phi_E)|_\Ic=0$).
It remains to check the well-posedness of this Neumann problem and to establish the bound~\eqref{eq:K_E_Om}.
Note that this well-posedness, together with the uniqueness in our construction of the pressures $\{\Pi_E^n\}_n$ below, ensures that $\tilde q_E$ and $\tilde \Pi_E$ are stationary.
We split the proof into three further substeps.

\medskip
\substep{1.1}  Well-posedness of the Neumann problem~\eqref{eq:Neumann_phi} for $\phi_E^n$.\\
The weak formulation of \eqref{eq:Neumann_phi} reads as follows: for all divergence-free fields $v \in H^1(I_n)^d$, 
\begin{align}\label{eq:weak_formul_Neumann_phi}
2 \int_{I_n} \D(v):\D(\phi_E^n) = \mathcal{L}_E(v), 
\end{align} 
where $\mathcal{L}_E$ stands for the linear form
\begin{equation*} 
\mathcal{L}_E(v) \,:=\,\int_{I_n} v\cdot f_n(E)+\int_{\partial I_n} v \cdot  \sigma(\phi_E, \Pi_E) \nu.
\end{equation*}
Using the boundary conditions for $(\phi_E,\Pi_E)$ in~\eqref{eq:phi-cor} and recalling the notation~\eqref{eq:extension-fn}, the latter can be reformulated as
\begin{equation}\label{e.lin-form-reform}
\mathcal{L}_E(v) \,=\, \int_{\partial I_n} \bigg(v -\Big(\fint_{I_n}v\Big)-\Big(\fint_{I_n}(\nabla v)^\Skew\Big)(x-x_n)\bigg)\cdot  \sigma(\phi_E, \Pi_E) \nu.
\end{equation}
We shall show that~\eqref{eq:weak_formul_Neumann_phi} is well-posed in the following Hilbert subspace of $H^1(I_n)^d$,
\[\mathcal{T} := \bigg\{v \in H^1(I_n)^d~:~ \Div(v) = 0, ~\fint_{I_n} v = 0,~\text{and}~\fint_{I_n} \nabla v \in \Md_0^\Sym \bigg\}.\]
First note that Korn's inequality yields for all $v \in \mathcal{T}$,
\[ \|\nabla v\|_{\Ld^2(I_n)} \,\lesssim\, \|\!\D(v)\|_{\Ld^2(I_n)}, \]
which entails that the bilinear form $(v,\psi) \mapsto 2 \int_{I_n} \D(v) :\D(\psi)$ is continuous and coercive on $\mathcal T\times \mathcal T$. By the Lax--Milgram theorem, in order to prove the well-posedness of~\eqref{eq:weak_formul_Neumann_phi}, it remains to show that the linear form $\mathcal{L}_E$ is continuous on $\mathcal T$ as well.

\medskip\noindent
In order to deal with the Neumann condition, we consider an extension map
\begin{align*}
T_n ~:~ \big\{v \in H^1(I_n)^d : \Div(v) = 0 \big\}~\to~\big\{ v \in H^1_0(I_{n} +  B)^d : \Div(v) = 0\big\},
\end{align*}
such that $T_nv|_{I_n} = v|_{I_n}$ and 
\begin{equation}\label{eq:extension-L2-est}
\|\nabla T_nv\|_{\Ld^2(I_n +  B)} \,\lesssim\, \|\nabla v\|_{\Ld^2(I_n)}.
\end{equation}
Smuggling in $T_n$ in~\eqref{e.lin-form-reform}, integrating by parts, and inserting equation~\eqref{eq:phi-cor}, the linear form $\mathcal L_E$ can be rewritten as 
\begin{eqnarray*}
\mathcal{L}_E(v) &=& -\int_{(I_n+B)\setminus I_n}\!\!\Div\bigg[\sigma(\phi_E, \Pi_E)\, T_n\bigg(\!v -\Big(\fint_{I_n}v\Big)-\Big(\fint_{I_n}(\nabla v)^\Skew\Big)(x-x_n)\!\bigg)\bigg]\\
&=&\int_{(I_n+B)\setminus I_n}T_n\bigg(v -\Big(\fint_{I_n}v\Big)-\Big(\fint_{I_n}(\nabla v)^\Skew\Big)(x-x_n)\bigg)\cdot f_n(E)\\
&&-2\int_{(I_n+B)\setminus I_n}\D(\phi_E):\D\bigg[ T_n\bigg(\!v -\Big(\fint_{I_n}v\Big)-\Big(\fint_{I_n}(\nabla v)^\Skew\Big)(x-x_n)\!\bigg)\bigg].
\end{eqnarray*}
Hence, in view of~\eqref{eq:extension-L2-est} and of Korn's inequality,
\begin{equation*}
|\mathcal{L}_E(v)| \,\lesssim\,\|\!\D(v)\|_{\Ld^2(I_n)} \Big( \|\!\D(\phi_E)\|_{\Ld^2(I_n +  B)} +  \|f_n(E)\|_{\Ld^2(I_n +  B)} \Big),
\end{equation*}
which proves the continuity of $\mathcal L_E$ on $\calT$.

\medskip\noindent
By the Lax--Milgram theorem, we deduce that there exists a unique solution $\phi_E^n\in\calT$ of~\eqref{eq:weak_formul_Neumann_phi}, and that it satisfies
\[\|\!\D(\phi_E^n)\|_{\Ld^2(I_n)}\,\lesssim\,\|\!\D(\phi_E)\|_{\Ld^2(I_n+B)}+\|f_n(E)\|_{\Ld^2(I_n+B)}.\]
By Korn's inequality, this further yields
\begin{equation}\label{eq:bound-phin}
\|\nabla\phi_E^n\|_{\Ld^2(I_n)}\,\lesssim\,\|\!\D(\phi_E)\|_{\Ld^2(I_n+B)}+\|f_n(E)\|_{\Ld^2(I_n+B)}.
\end{equation}

\medskip
\substep{1.2} Construction of the pressure.\\
As~\eqref{eq:def-tilde-qPi} reads $\tilde q_E=\D(\phi_E)+\D(\phi_E^n)\mathds1_{I_n}$ in $I_n+B$,
combining equation~\eqref{eq:phi-cor} for $\phi_E$ and equation~\eqref{eq:weak_formul_Neumann_phi} for $\phi_E^n$, we find for all $v\in C^\infty_c(I_n+B)^d$ with $\Div(v)=0$,
\[2\int_{\R^d}\D(v):\tilde q_E=\int_{\R^d}v\cdot f_n(E).\]
Appealing e.g.\@ to~\cite[Proposition~12.10]{JKO94}, we deduce that there exists an associated pressure field $\Pi_E^n\in\Ld^2_\loc(I_n+B)$, which is unique up to an additive constant, such that for all test functions $v\in C^\infty_c(I_n+B)^d$,
\begin{equation}\label{eq:defin-SigmaEn-eqn}
\int_{\R^d}\D(v):(2\tilde q_E-\Pi_E^n\Id )=\int_{\R^d}v\cdot f_n(E).
\end{equation}
Since for all $v\in C^\infty_c((I_n+B)\setminus I_n)^d$ we get
\[\int_{\R^d}\D(v):\sigma(\phi_E,\Pi_E^n)\,=\,\int_{\R^d}\D(v):(2\tilde q_E-\Pi_E^n\Id)\,=\,\int_{\R^d}v\cdot f_n(E),\]
and comparing with equation~\eqref{eq:phi-cor},
we deduce that $\Pi_E^n$ can be chosen uniquely to coincide with $\Pi_E$ on $(I_n+B)\setminus I_n$.

\medskip\noindent
It remains to  estimate  the above-constructed pressure. Using that $\Pi_E^n$ coincides with $\Pi_E$ in $(I_n+B)\setminus I_n$, we can split 
\begin{eqnarray*}
\int_{I_n+B} \Pi_E^n&=& \int_{(I_n+B)\setminus I_n} \Pi_E+\int_{I_n}\Pi_E^n \\
&=&\int_{(I_n+B)\setminus I_n} \Pi_E+\int_{I_n}\Big(\Pi_E^n-\fint_{I_n+B} \Pi_E^n\Big)+|I_n| \fint_{I_n+B} \Pi_E,
\end{eqnarray*}
to the effect that
$$
(|I_n+B|-|I_n|) \Big|\fint_{I_n+B} \Pi_E^n\Big| \,\le \, \Big| \int_{(I_n+B)\setminus I_n} \Pi_E\Big|+ \Big|\int_{I_n}\Big(\Pi_E^n-\fint_{I_n+B} \Pi_E^n\Big)\Big|.
$$
Hence, 
\begin{eqnarray*}
\|\Pi_E^n\|_{\Ld^2(I_n)}&\lesssim&\Big\|\Pi_E^n-\fint_{I_n+B}\Pi_E^n\Big\|_{\Ld^2(I_n+B)}+\Big|\fint_{I_n+B}\Pi_E^n\Big|\\
&\lesssim&\Big\|\Pi_E^n-\fint_{I_n+B}\Pi_E^n\Big\|_{\Ld^2(I_n+B)}+\Big|\fint_{(I_n+B)\setminus I_n}\Pi_E\Big|.
\end{eqnarray*}
Starting from~\eqref{eq:defin-SigmaEn-eqn}, a standard argument based on the Bogovskii operator yields
\[\Big\|\Pi_E^n-\fint_{I_n+B}\Pi_E^n\Big\|_{\Ld^2(I_n+B)}\,\lesssim\,\|\tilde q_E\|_{\Ld^2(I_n+B)},\]
so that the above becomes
\[\|\Pi_E^n\|_{\Ld^2(I_n)}\,\lesssim\,\|\tilde q_E\|_{\Ld^2(I_n+B)}+\|\Pi_E\|_{\Ld^2((I_n+B)\setminus I_n)}.\]
Combining this with~\eqref{eq:bound-phin}, we get
\begin{equation}\label{eq:bounds-phinPin}
\|\nabla\phi_E^n\|_{\Ld^2(I_n)}+\|\Pi_E^n\|_{\Ld^2(I_n)}\,\lesssim\,\|\sigma(\phi_E,\Pi_E)\|_{\Ld^2((I_n+B)\setminus I_n)}+\|f_n(E)\|_{\Ld^2(I_n+B)}.
\end{equation}

\medskip
\substep{1.3} Proof of~\eqref{eq:K_E_Om}.\\
By definition~\eqref{eq:constr-KE-flux}, the bound~\eqref{eq:bounds-phinPin} yields for all $n$,
\begin{eqnarray}
\|K_E\|_{\Ld^2(I_n)}&\lesssim&\|\sigma(\phi_E^n,\Pi_E^n)\|_{\Ld^2(I_n)}\nonumber\\
&\lesssim&\|\sigma(\phi_E,\Pi_E)\|_{\Ld^2((I_n+B)\setminus I_n)}+\|f_n(E)\|_{\Ld^2(I_n+B)}.\label{eq:K_E_I_n}
\end{eqnarray}
Hence, for all $R>0$,
\begin{equation*}
\|K_E\|_{\Ld^2(B_R)}^2\,\lesssim\,\|\sigma(\phi_E,\Pi_E)\mathds1_{\R^d\setminus\Ic}\|_{\Ld^2(B_{R+5})}^2+\sum_{n:I_n\cap B_R\ne\varnothing}\int_{I_n+B}|f_n(E)|^2,
\end{equation*}
and thus, by stationarity, letting $R\uparrow\infty$,
\begin{equation*}
\|K_E\|_{\Ld^2(\Omega)}^2\,\lesssim\,\|\sigma(\phi_E,\Pi_E)\mathds1_{\R^d\setminus\Ic}\|_{\Ld^2(\Omega)}^2+\lambda\,\expecM{\int_{I_\circ+B}|f_\circ(E)|^2}.
\end{equation*}
Combined with~\eqref{eq:bnd-unif-E-phiE}, this yields~\eqref{eq:K_E_Om}.

\medskip
\step{2} Formula for $\expec{K_E}$ and definition of $\Cc(E)$ and $\cc(E)$.\\
We split the proof into two further substeps, separately proving formula~\eqref{eq:prop-exp-KE} for $\expec{K_E}$ and establishing the alternative formulas~\eqref{eq:def-C+} for $\Cc(E)$ and $\cc(E)$.

\medskip
\substep{2.1} Proof of~\eqref{eq:prop-exp-KE}.\\
The hardcore assumption allows to construct almost surely for all $R>0$ a cut-off function~$\eta_R$ such that
\[\eta_R|_{B_R}=1,\qquad\eta_R|_{\R^d\setminus B_{R+5}}=0,\qquad|\nabla\eta_R|\lesssim1,\]
and such that $\eta_R$ is constant in $I_n+B$ for all $n$.
By definition of $K_E$ and $(\phi_E,\Pi_E)$, we have
\[\expecm{K_E\mathds1_{\R^d\setminus\Ic}}\,=\,\expecm{\sigma(\phi_E,\Pi_E)\mathds1_{\R^d\setminus\Ic}}\,=\,2\expecm{\!\D(\phi_E)}-\expecm{\Pi_E\mathds1_{\R^d\setminus\Ic}}\Id\,=\,0,\]
and the ergodic theorem then yields almost surely,
\begin{equation}\label{eq:first-EK}
\expec{K_E} \,=\, \expecm{K_E\mathds1_{\Ic}}\,=\, \lim_{R \uparrow\infty}|B_R|^{-1} \int_\Ic \eta_R\,K_E.
\end{equation}
By definition of $K_E$ and the choice of $\eta_R$,  integration by parts and the equation~\eqref{eq:Neumann_phi} for~$(\phi_E^n,\Sigma_E^n)$ yield
\begin{eqnarray*}
\int_{\Ic} \eta_R\,K_E
&=& \sum_n \eta_R(x_n) \int_{I_n}\sigma(\phi_E^n, \Pi_E^n)\\
&=& \sum_n \eta_R(x_n) \int_{\partial I_n} \sigma(\phi_E^n, \Pi_E^n)\nu\otimes(x- x_n)\\
&&+\sum_n \eta_R(x_n) \int_{I_n} f_n(E)\otimes(x- x_n).
\end{eqnarray*}
By the boundary conditions for $(\phi_E^n,\Pi_E^n)$ and recalling the notation~\eqref{eq:extension-fn}, we deduce
\begin{eqnarray*}
\int_{\Ic} \eta_R\,K_E
&=&\int_{\R^d}\eta_R\sum_n\tfrac{\mathds1_{I_n}}{|I_n|}\Big(\tilde f_n(E)+\int_{\partial I_n} \sigma(\phi_E, \Pi_E)\nu\otimes(x-x_n)\Big).
\end{eqnarray*}
Letting $R\uparrow\infty$, the ergodic theorem then entails
\[\expec{K_E}\,=\,\expecM{\sum_n\tfrac{\mathds1_{I_n}}{|I_n|}\Big(\tilde f_n(E)+\int_{\partial I_n} \sigma(\phi_E, \Pi_E)\nu\otimes(\cdot-x_n)\Big)}.\]
Since $K_E$ is symmetric, taking the symmetric part of this identity yields~\eqref{eq:prop-exp-KE} in combination with the skew-symmetry of $\tilde f_n(E)$ and the definition~\eqref{eq:def-C} of $\Cc(E)$ and $\cc(E)$.

\medskip
\step{2.2} Proof of~\eqref{eq:def-C+}.\\
We start from~\eqref{eq:def-C}, projected in some direction $E'\in\Md_0^\Sym$,
\begin{equation*}
2E':\Cc(E)\,=\,\expecM{\sum_n\tfrac{\mathds1_{I_n}}{|I_n|}\int_{\partial I_n}E'(x-x_n)\cdot \sigma(\phi_E, \Pi_E)\nu},
\end{equation*}
which we reformulate using the ergodic theorem as 
\begin{equation}\label{eq:rewr-Cc-lim}
2E':\Cc(E)\,=\,\lim_{R\uparrow\infty}|B_R|^{-1}\sum_n\int_{\partial I_n}\eta_RE'(x-x_n)\cdot \sigma(\phi_E, \Pi_E)\nu,
\end{equation}
with $\eta_R$ as above.
We turn to a suitable reformulation of the right-hand side.
Adding and subtracting the passive corrector~$\psi_{E'}$, we can write
\begin{multline}\label{eq:reform-test-sigmaphi-againstE'}
\sum_n\int_{\partial I_n}\eta_RE'(x-x_n)\cdot \sigma(\phi_E, \Pi_E)\nu\\
\,=\,\sum_n\int_{\partial I_n}\eta_R(\psi_{E'}+E'(x-x_n))\cdot \sigma(\phi_E, \Pi_E)\nu
-\sum_n\int_{\partial I_n}\eta_R\psi_{E'}\cdot \sigma(\phi_E, \Pi_E)\nu.
\end{multline}
For the first right-hand side term, we use that $\psi_{E'}+E'(x-x_n)$ is a rigid motion in~$I_n$, cf.~\eqref{e.cor-1}, we appeal to boundary conditions for $(\phi_E,\Pi_E)$ in~\eqref{eq:phi-cor}, and we recall the notation~\eqref{eq:extension-fn},
which leads us to
\begin{equation*}
\sum_n\int_{\partial I_n}\eta_R(\psi_{E'}+E'(x-x_n))\cdot \sigma(\phi_E, \Pi_E)\nu
\,=\,-\sum_n\int_{I_n}\eta_R(\psi_{E'}+E'(x-x_n))\cdot f_n(E).
\end{equation*}
In order to reformulate the second right-hand side term of \eqref{eq:reform-test-sigmaphi-againstE'}, we appeal to the weak formulation of equation~\eqref{eq:phi-cor} for $\phi_E$: testing this equation with $\eta_R\psi_{E'}$ yields, as in~\eqref{eq:test-as-etaR-phiE},
\begin{multline*}
\int_{\R^d}\nabla(\eta_R\psi_{E'}):\nabla\phi_E - \int_{\R^d} \nabla \eta_R \cdot \psi_{E'} \Pi_E \,=\,\sum_n\int_{(I_n+B)\setminus I_n}\eta_R\psi_{E'}\cdot f_n(E) \\-\sum_n\int_{\partial I_n}\eta_R\psi_{E'}\cdot\sigma(\phi_E,\Pi_E)\nu.
\end{multline*}
Hence, \eqref{eq:reform-test-sigmaphi-againstE'} turns into
\begin{multline*}
\sum_n\int_{\partial I_n}\eta_RE'(x-x_n)\cdot \sigma(\phi_E, \Pi_E)\nu
\,=\,-\sum_n\int_{I_n}\eta_R(\psi_{E'}+E'(x-x_n))\cdot f_n(E)\\
- \int_{\R^d} \nabla \eta_R \cdot \psi_{E'} \Pi_E +\int_{\R^d}\nabla(\eta_R\psi_{E'}):\nabla\phi_E
-\sum_n\int_{(I_n+B)\setminus I_n}\eta_R\psi_{E'}\cdot f_n(E),
\end{multline*}
and thus, after reorganizing the terms, using that the skew-symmetry of $\tilde f_n(E)$ in~\eqref{eq:extension-fn} yields $\int_{I_n}E'(x-x_n)\cdot f_n(E)=0$,
\begin{multline*}
\sum_n\int_{\partial I_n}\eta_RE'(x-x_n)\cdot \sigma(\phi_E, \Pi_E)\nu
\,=\,\int_{\R^d}\nabla(\eta_R\psi_{E'}):\nabla\phi_E
\\
- \int_{\R^d} \nabla \eta_R \cdot \psi_{E'} \Pi_E -\sum_n\int_{I_n+B}\eta_R\psi_{E'}\cdot f_n(E).
\end{multline*}
Alternatively, expanding the first right-hand side term,
\begin{multline*}
\sum_n\int_{\partial I_n}\eta_RE'(x-x_n)\cdot \sigma(\phi_E, \Pi_E)\nu
\,=\,\int_{\R^d}\nabla\psi_{E'}:\nabla(\eta_R\phi_E) - \int_{\R^d} \nabla \eta_R \cdot \psi_{E'} \Pi_E\\
+\int_{\R^d}(\psi_{E'}\otimes\nabla\eta_R):\nabla\phi_E
-\int_{\R^d}(\phi_E\otimes\nabla\eta_R):\nabla\psi_{E'}
-\sum_n\int_{I_n+B}\eta_R\psi_{E'}\cdot f_n(E).
\end{multline*}
Now testing equation~\eqref{e.cor-1} for $\psi_{E'}$ with $\eta_R\phi_E$, and using that $\eta_R\phi_E$ is rigid in $\Ic$,
we find that the first right-hand side term only yields a term involving $\Sigma_E$
\begin{multline*}
\sum_n\int_{\partial I_n}\eta_RE'(x-x_n)\cdot \sigma(\phi_E, \Pi_E)\nu
\,=\, - \int_{\R^d} \nabla \eta_R \cdot (\psi_{E'} \Pi_E + \phi_E \Sigma_{E'}) \\
+ \int_{\R^d}(\psi_{E'}\otimes\nabla\eta_R):\nabla\phi_E
-\int_{\R^d}(\phi_E\otimes\nabla\eta_R):\nabla\psi_{E'}
-\sum_n\int_{I_n+B}\eta_R\psi_{E'}\cdot f_n(E).
\end{multline*}
Using the sublinearity of $\psi_{E'}$ and $\phi_E$ and the stationarity of $\nabla\psi_{E'},\nabla\phi_E,\Pi_E\mathds1_{\R^d\setminus\Ic}$, cf.~Lemmas~\ref{lem:pass-cor} and~\ref{lem:cor-act}, we can now pass to the limit $R\uparrow\infty$ in this identity, and we obtain
\begin{equation*}
\lim_{R\uparrow\infty}|B_R|^{-1}\sum_n\int_{\partial I_n}\eta_RE'(x-x_n)\cdot \sigma(\phi_E, \Pi_E)\nu
\,=\,-\expecM{\sum_n\tfrac{\mathds1_{I_n}}{|I_n|}\int_{I_n+B}\psi_{E'}\cdot f_n(E)}.
\end{equation*}
Inserting this into~\eqref{eq:rewr-Cc-lim},
the conclusion~\eqref{eq:def-C+} follows.
Finally, the formula for $\cc(E)$ simply follows by taking the trace in~\eqref{eq:def-C}.

\medskip
\step3 Construction of $\theta$. \\
This construction is standard: as $L_E$ is stationary, it follows from stationary calculus, e.g.~\cite[Chapter 7]{JKO94}, that there is a unique solution $\theta_E = (\theta_{E;ijk})_{ijk}\in\Ld^2(\Omega;H^1_\loc(\R^d))$ of
\begin{align}\label{eq:thetaE-Coul-gauge}
- \triangle \theta_{E;ijk} \,=\, \partial_j L_{E;ik} - \partial_k L_{E;ij},
\end{align}
such that $\nabla \theta_E$ is stationary, has vanishing expectation, has finite second moment,
\begin{align*}
\| \nabla \theta_E \|_{\Ld^2(\Omega)} \,\lesssim\, \|L_E\|_{\Ld^2(\Omega)} \,\lesssim\, \|K_E\|_{\Ld^2(\Omega)} + \|\nabla \gamma_E\|_{\Ld^2(\Omega)},
\end{align*} 
and satisfies the anchoring condition $\fint_B \theta_E = 0$.
By uniqueness, the skew-symmetry of~$\theta_E$ follows from the skew-symmetry of the right-hand side of~\eqref{eq:thetaE-Coul-gauge} with respect to indices~$j,k$.
The fact that $\theta_E$ is a vector potential for $L_E$ follows from this defining equation as e.g.\@ in~\cite[Section 3.1]{GNO-reg}. Finally, the sublinearity of $\theta_E$ is a standard property for random fields with stationary gradient and vanishing expectation; see e.g.~\cite[Chapter 7]{JKO94}.
\end{proof}

In view of the internal contribution to the effective viscosity, cf.~\eqref{eq:def-EF-0}, we also define the following stationary field,
\begin{equation}\label{eq:def-corF}
F_E\,:=\,-\sum_n\tfrac{\mathds1_{I_n}}{|I_n|}\int_{I_n+B}f_n(E)\otimes(x-x_n),
\end{equation}
which is such that
\begin{equation}\label{eq:def-EF}
\Ff_E\,=\,\expec{F_E}.
\end{equation}

When considering two-scale expansions, since correctors $\phi_E,\Pi_E,\gamma_E,\theta_E$, fluxes~$K_E,L_E$, and~$F_E$ are nonlinear with respect to $E$, we shall need to consider derivatives of these objects with respect to $E$.
We introduce a general notation for linearized quantities.

\begin{defin}\label{def:lin-corr}
Given $E,E',E''\in \Md_0^\Sym$, we use the following notation for directional derivatives of swimming forces $f_n(E)$ in directions $E',E''$,
\begin{eqnarray*}
\partial_{E'} f_n(E)&:=&\lim_{h\to0} \tfrac1h\big(f_n(E+hE')-f_n(E)\big),\\
\partial_{E',E''} f_n(E)&:=&
\lim_{h\to0} \tfrac1h\big(\partial_{E'}f_n(E+hE'')-\partial_{E'}f_n(E)\big).
\end{eqnarray*}

\begin{enumerate}[---]
\item \emph{First linearized correctors:}
We define $(\df \phi_{E,E'},\df \Pi_{E,E'})$ as in Lemma~\ref{lem:cor-act} with the swimming forces $f_n(E)$ replaced by $\partial_{E'} f_n(E)$ (and similarly for $\df F_{E,E'}$).
We then define~$\df \gamma_{E,E'}$ as in Lemma~\ref{lem:cor-gamma}
and~$\df K_{E,E'},\df L_{E,E'},\df \theta_{E,E'}$ as in Lemma~\ref{lem:KE} with $f_n(E)$ replaced 
by~$\partial_{E'} f_n(E)$ and with $(\phi_{E}, \Pi_{E})$ replaced by~$(\df \phi_{E,E'}, \df \Pi_{E,E'})$.
\smallskip\item \emph{Second linearized correctors:}
We define $(\df^2\phi_{E,E',E''}, \df^2 \Pi_{E,E',E''})$ as in Lemma~\ref{lem:cor-act} with swimming forces $f_n(E)$ replaced by $\partial_{E',E''} f_n(E)$ (and similarly for $\df^2 F_{E,E',E''}$).
We then define~$\df^2 \gamma_{E,E',E''}$ as in Lemma~\ref{lem:cor-gamma}
and~$\df^2 K_{E,E',E''},\df^2 L_{E,E',E''}, \df^2 \theta_{E,E',E''}$ as in Lemma~\ref{lem:KE} with $f_n(E)$ replaced 
by~$\partial_{E',E''} f_n(E)$ and with~$(\phi_{E}, \Pi_{E})$ replaced by~$(\df^2 \phi_{E,E',E''}$, $\df^2 \Pi_{E,E',E''})$.
\qedhere
\end{enumerate}
\end{defin}

Finally, we state that the effective maps $\Cc,\cc,\Ff$ are smooth,
with derivatives given in terms of linearized correctors.
The proof, based on Hypothesis~\ref{hyp:swim}, is straightforward and left to the reader.

\begin{lem}\label{lem:diffC}
The effective maps $\Cc,\cc,\Ff$ are smooth and 
\begin{gather}
|\Cc(E)|,|\cc(E)|,|\Ff(E)|\lesssim\lambda\langle E\rangle,\qquad
|\partial^k\Cc(E)|,|\partial^k\cc(E)|,|\partial^k\Ff(E)|\lesssim\lambda.
\label{eq:Cc-cc-reg}
\end{gather}
Moreover, we have for all~$E,F \in  \Md_0^\Sym$,
\begin{equation*} 
2\partial_F \Cc(E) +\partial_F \cc(E)\Id\,=\,\expecM{\sum_n\tfrac{\mathds1_{I_n}}{|I_n|}\int_{\partial I_n} \sigma(\df\phi_{E,F}, \df\Pi_{E,F})\nu\otimes_s(x-x_n)}.
\end{equation*}
In particular, in view of~\eqref{eq:prop-exp-KE}, this yields $\partial_F \expec{K_E}=\expec{\df K_{E,F}}$.
\end{lem}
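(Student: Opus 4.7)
The plan is to reduce everything to the smoothness of the swimming force $E\mapsto f_n(E)$ provided by Hypothesis~\ref{hyp:swim} combined with the linearity of the corrector construction of Lemma~\ref{lem:cor-act} with respect to the right-hand side.

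First, the smoothness and bounds on $\Ff$ follow immediately from the defining formula~\eqref{eq:def-EF-0} and Hypothesis~\ref{hyp:swim}: the map $E\mapsto\sum_n\tfrac{\mathds1_{I_n}}{|I_n|}\int_{I_n+B}E'(x-x_n)\cdot f_n(E)$ is smooth in $E$ with the $k$th derivative $(k\ge1)$ locally bounded by a constant times $\sum_n\mathds1_{I_n}$, whose expectation equals the particle volume fraction $\lambda$. Dominated convergence then yields smoothness of $\Ff$ and the announced estimates.

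The nontrivial point is to differentiate $\Cc$ and $\cc$, which amounts to differentiating the corrector couple $(\phi_E,\Pi_E)$. Fix $E,F\in\Md_0^\Sym$ and $h\in(0,1)$, and consider the difference quotient $(\phi^h,\Pi^h):=\frac1h(\phi_{E+hF}-\phi_E,\Pi_{E+hF}-\Pi_E)$. By linearity of system~\eqref{eq:phi-cor} in the right-hand side, $(\phi^h,\Pi^h)$ satisfies the corrector equations of Lemma~\ref{lem:cor-act} with swimming forces $\frac1h(f_n(E+hF)-f_n(E))$. By Hypothesis~\ref{hyp:swim}, as $h\downarrow0$ these forces converge to $\partial_F f_n(E)$ in $\Ld^\infty((I_n+B)\setminus I_n)$, uniformly in $n$, while remaining bounded by $C|F|$. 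The energy estimate~\eqref{eq:bnd-unif-E-phiE} applied to the \emph{linear} problem satisfied by $(\phi^h-\df\phi_{E,F},\Pi^h-\df\Pi_{E,F})$ (whose right-hand side tends to zero as $h\downarrow 0$) then yields
\[\expecm{|\nabla\phi^h-\nabla\df\phi_{E,F}|^2}+\expecm{(\Pi^h-\df\Pi_{E,F})^2\mathds1_{\R^d\setminus\Ic}}\,\longrightarrow\,0,\]
so that $\partial_F\phi_E=\df\phi_{E,F}$ and $\partial_F\Pi_E=\df\Pi_{E,F}$ in the natural $\Ld^2(\Omega)$ sense. Iterating the same argument with $\partial_{E',E''}f_n(E)$ provides higher-order derivatives of the correctors, and the uniform bounds $\|\partial^kf_n(E)\|\lesssim 1$ of Hypothesis~\ref{hyp:swim} translate into $\expecm{|\nabla\partial^k\phi_E|^2}\lesssim\lambda$ and $\expecm{(\partial^k\Pi_E)^2\mathds1_{\R^d\setminus\Ic}}\lesssim\lambda$ via~\eqref{eq:bnd-unif-E-phiE}.

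It then remains to differentiate under the expectation in the definitions~\eqref{eq:def-C} of $\Cc$ and $\cc$. Reproducing the argument leading to~\eqref{eq:prop-exp-KE} (using the Neumann construction of Step~1 of the proof of Lemma~\ref{lem:KE}, whose linearity in the right-hand side commutes with differentiation in $E$), the difference quotient of $\expec{K_E}$ in direction $F$ coincides with $\expec{\df K_{E,F}}$ up to an error controlled by $\Ld^2(\Omega)$-norms of $\frac1h(\phi_{E+hF}-\phi_E)-\df\phi_{E,F}$ and the corresponding pressure difference, which vanish as $h\downarrow 0$ by the previous step. This yields the claimed identity $\partial_F\expec{K_E}=\expec{\df K_{E,F}}$, and unfolding into symmetric and trace parts provides the stated formulas for $\partial_F\Cc(E)$ and $\partial_F\cc(E)$; the bounds $|\partial^k\Cc(E)|,|\partial^k\cc(E)|\lesssim\lambda$ follow from the uniform estimates on $\nabla\df^k\phi$ and $\df^k\Pi\mathds1_{\R^d\setminus\Ic}$ via Cauchy--Schwarz. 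The main (mild) obstacle here is the bookkeeping to justify dominated convergence for the boundary integrals on $\partial I_n$, but this is automatic once the correctors converge in $H^1_\loc$ together with their associated pressures in $\Ld^2_\loc$ on a neighborhood of each $\partial I_n$, as provided by the $\Ld^2(\Omega)$ bounds above and the uniform $C^2$-regularity in Hypothesis~\ref{hyp:part}.
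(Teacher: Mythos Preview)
The paper actually omits the proof of this lemma entirely, stating only that it ``is straightforward and left to the reader'' based on Hypothesis~\ref{hyp:swim}. Your proposal is a correct and careful way to fill in precisely those details: you exploit the linearity of the corrector problem~\eqref{eq:phi-cor} in the swimming forces, pass difference quotients through the energy estimate~\eqref{eq:bnd-unif-E-phiE}, and then through the flux construction of Lemma~\ref{lem:KE}, which is exactly the natural route the paper implicitly has in mind.
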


\section{Proof of the homogenization result}\label{sec:hom}
This section is devoted to the proof of Theorem~\ref{th:homog}. We quickly establish the well-posedness result of Proposition~\ref{lem:well-posed} before turning to the analysis of the limit $\e \downarrow 0$.
For any map $V$ and domain $D$, we henceforth use the short-hand notation
$V_{\nmid D}\,:=\,\fint_D V$.

\subsection{Well-posedness of hydrodynamic model}
This section is devoted to the proof of Proposition~\ref{lem:well-posed}.
We start by recalling the following standard computation (see e.g.~\cite{DG-21}), which we already partly encountered when proving existence of correctors.
\begin{lem}[e.g.~\cite{DG-21}]
If vector fields $u,h$ and a scalar field $P$ satisfy the following relations,
\[\left\{\begin{array}{ll}
-\triangle u+\nabla P=h,&\text{in $\R^d\setminus\Ic$},\\
\Div(u)=0,&\text{in $\R^d\setminus\Ic$},\\
\D(u)=0,&\text{in $\Ic$},
\end{array}\right.\]
then the following  holds in $\R^d$
\begin{equation}\label{e.weak-sense}
-\triangle u+ \nabla(P\mathds1_{\R^d\setminus\Ic})=h\mathds1_{\R^d\setminus\Ic} - \sum_n\delta_{\partial I_n}\sigma(u,P)\nu.
\end{equation}
The same is true if~$\R^d$ and $\Ic$ are replaced by $U$ and $\Ic_\e(U)$, respectively.
\end{lem}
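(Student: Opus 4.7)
The statement is a purely distributional identity: the left-hand side is interpreted on all of $\R^d$, and the singular right-hand side records the jump of the Cauchy stress across each $\partial I_n$. My plan is to rewrite everything through $\sigma(u,P)$, apply the elementary jump formula for the divergence of a truncated smooth tensor field, and read off the result; the gain of this reformulation is that we never need to compute traces of $\nabla u$ separately from the fluid and the particle sides.

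\textbf{Step 1: Reduction to a stress-divergence form.} I first note that $\D(u)=0$ in $\Ic$ forces $\Div(u) = \Tr \D(u) = 0$ inside each particle; combined with $\Div(u)=0$ in $\R^d\setminus\Ic$ and the continuity of the normal trace of $u$ across $\partial I_n$ (inherent in $u \in H^1_{\loc}$), this yields $\Div(u)=0$ in $\mathcal D'(\R^d)$. Hence in the distributional sense on $\R^d$,
\[ -\triangle u \,=\, -\Div\big(\nabla u + (\nabla u)^T\big) \,=\, -2\Div(\D(u)).\]
Since $\D(u)$ \emph{vanishes} on $\Ic$, the pointwise equality $\D(u) = \D(u)\mathds1_{\R^d\setminus\Ic}$ holds as a locally integrable function, so
\[ -\triangle u + \nabla\!\big(P\mathds1_{\R^d\setminus\Ic}\big) \,=\, -\Div\!\big(\sigma(u,P)\mathds1_{\R^d\setminus\Ic}\big) \qquad \text{in } \mathcal D'(\R^d).\]

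\textbf{Step 2: Jump formula and conclusion.} The key elementary identity is: for a tensor field $T$ smooth on $\R^d\setminus\Ic$ with sufficient integrability,
\[ \Div\!\big(T\mathds1_{\R^d\setminus\Ic}\big) \,=\, \Div(T)\,\mathds1_{\R^d\setminus\Ic} + \sum_n \delta_{\partial I_n}\,T\nu \qquad \text{in } \mathcal D'(\R^d),\]
with $\nu$ the outer normal to $I_n$. This follows at once by testing against $v \in C^\infty_c(\R^d)^d$ and applying Green's identity in the fluid domain, the sign coming from the outer normal of $\R^d\setminus\Ic$ being $-\nu$ on $\partial I_n$. Applied with $T=\sigma(u,P)$ and using that in the fluid domain $-\Div\sigma(u,P) = -\triangle u + \nabla P = h$ (again by $\Div(u)=0$), the formula~\eqref{e.weak-sense} falls out directly from Step~1.

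\textbf{Step 3: Localization to the tank.} For the variant with $\R^d,\Ic$ replaced by $U,\Ic_\e(U)$, the argument is identical: test functions are taken in $C^\infty_c(U)^d$, so no contribution from $\partial U$ arises, and the jump formula of Step~2 is unchanged (the sum simply runs over $n\in\Nc_\e(U)$). No further ingredient is required.

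\textbf{Main technical point.} There is no genuine obstacle here beyond bookkeeping; the only subtlety worth flagging is that a direct IBP of $\langle -\triangle u,v\rangle = \int \nabla u : \nabla v$ forces one to handle the jump of $(\nabla u)\nu$ across $\partial I_n$ and its relation to $2\D(u)\nu$, which would require invoking the continuity of the tangential trace of $u$ together with $\Div u=0$ to recover the symmetric form. Routing everything through $-\Div\sigma(u,P)$ from the outset — which is legitimate precisely because $\D(u)$ is globally defined and vanishes in $\Ic$ — bypasses this annoyance entirely.
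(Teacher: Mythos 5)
Your proof is correct, and since the paper does not actually prove this lemma (it is stated as a recall, with a citation to~\cite{DG-21}), there is no in-paper argument to compare against; what you give is the standard distributional computation that one would find in that reference. The reduction to $-\Div\big(\sigma(u,P)\mathds1_{\R^d\setminus\Ic}\big)$ followed by the jump formula is exactly the usual route. One small simplification in Step~1: you invoke continuity of the normal trace to get $\Div(u)=0$ in $\mathcal D'(\R^d)$, but since $u\in H^1_\loc$, $\Div(u)$ is already an $\Ld^2_\loc$ \emph{function}, which vanishes a.e.\ in $\R^d\setminus\Ic$ (given) and a.e.\ in $\Ic$ (as $\Tr\D(u)=0$), hence identically; no trace argument is needed. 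Likewise $\D(u)=\D(u)\mathds1_{\R^d\setminus\Ic}$ holds as an equality of $\Ld^2_\loc$ functions for the same reason. Your closing remark about why routing through $\sigma(u,P)$ avoids handling jumps of $(\nabla u)\nu$ separately is apt and is precisely why the symmetric-gradient form is preferred here.
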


We turn to the proof of Proposition~\ref{lem:well-posed} and proceed by a fixed-point argument. Let~$\e>0$ be fixed. Given $v\in W^{1,1}(U)^d$, define $T_\e(v)\in H^1_0(U)^d$ as the unique solution of the following linear problem, with associated pressure $P_\e(v)\in\Ld^2(U\setminus\Ic_\e(U))$,
\begin{equation}\label{eq:approx-ueps-n}
\left\{\begin{array}{ll}
-\triangle T_\e(v)+\nabla P_\e(v)\\
 \qquad = h+\frac{\kappa}{\e} \sum_{n \in \mathcal{N} _\e(U)} f_{n,\e} (\chi_\delta \ast\D(v)_{\nmid \e I_n}), &\text{in $U\setminus\Ic_\e (U)$},\\
\Div(T_\e(v))=0,&\text{in $U\setminus\Ic_\e (U)$},\\
\D(T_\e(v))=0,&\text{in $\Ic_\e (U)$},\\
\int_{\e\partial I_n }\sigma(T_\e(v),P_\e(v))\nu+\tfrac \kappa \e \bar f_{n,\e} (\chi_\delta \ast\D(v)_{\nmid \e I_n} ) = 0,&\forall n \in \Nc_\e (U),\\
\int_{\e\partial I_n }\Theta(x-\e x_n )\cdot\sigma(T_\e(v),P_\e(v))\nu&\\
\hspace{2.5cm}+\kappa\,\Theta: \tilde f_{n}(\chi_\delta\ast\D(v)_{\nmid \e I_n})=0,&\forall \Theta \in \Md^\Skew,\forall n\in\Nc_\e(U).
\end{array}\right.
\end{equation}
We split the proof into three steps. We start by proving the result under the stronger smallness condition $\kappa\ell^{-d/2}\ll1$, before relaxing it to~\eqref{eq:cond-small-kappa}.

\medskip
\step1
Suboptimal contraction estimate: for all $v,w\in H^1_0(U)^d$ we have
\begin{equation}\label{eq:subopt-contract}
\int_U |\nabla (T_\e(v)-T_\e(w))|^2\,\lesssim_\chi\,(\kappa \ell^{-\frac d2})^2\int_U|\nabla(v-w)|^2.
\end{equation}
This proves that $T_\e$ is a contraction on $H^1_0(U)^d$ provided that $\kappa\ell^{-\frac d2}\ll1$ is small enough. Under this condition, we deduce the well-posedness of the hydrodynamic model~\eqref{eq:Stokes} in~$H^1_0(U)^d$.

\medskip\noindent
We turn to the proof of~\eqref{eq:subopt-contract}. For abbreviation, we set $T_\e(v,w):=T_\e(v)-T_\e(w)$ and $P_\e(v,w)=P_\e(v)-P_\e(w)$.
Testing the equations for $T_\e(v)$ and $T_\e(w)$ with $T_\e(v,w)$ in form of~\eqref{e.weak-sense}, we find
\begin{multline*}
\int_U |\nabla T_\e(v,w)|^2
\,=\,
-\sum_{n\in\Nc_\e(U)}\int_{\e\partial I_n}T_\e(v,w)\cdot\sigma(T_\e(v,w),P_\e(v,w))\nu\\
+\tfrac\kappa\e\sum_{n\in\Nc_\e(U)}\int_{\e(I_n+B)\setminus \e I_n}T_\e(v,w)\cdot\Big(f_{n,\e}(\chi_\delta\ast\D(v)_{\nmid\e I_n})-f_{n,\e}(\chi_\delta\ast\D(w)_{\nmid\e I_n})\Big),
\end{multline*}
and thus,
using the rigidity of $T_\e(v,w)$ in $\e I_n$ and using boundary conditions, recalling the notation~\eqref{eq:extension-fn},
\begin{equation*}
\int_U |\nabla T_\e(v,w)|^2
\,=\,\tfrac\kappa\e\sum_{n\in\Nc_\e(U)}\int_{\e(I_n+B)}T_\e(v,w)\cdot\Big(f_{n,\e}(\chi_\delta\ast\D(v)_{\nmid\e I_n})-f_{n,\e}(\chi_\delta\ast\D(w)_{\nmid\e I_n})\Big).
\end{equation*}
By the neutrality condition~\eqref{eq:neutrality}, appealing to Poincaré's inequality, using the hardcore assumption, and recalling that $f_{n,\e}$ is Lipschitz, we get
\begin{equation*}
\int_U |\nabla T_\e(v,w)|^2\,\lesssim\,\kappa\Big(\int_{U}|\nabla T_\e(v,w)|^2\Big)^\frac12\bigg(\sum_{n\in\Nc_\e(U)}\int_{\e I_n}|\chi_\delta\ast\D(v-w)|^2\bigg)^\frac12.
\end{equation*}
By Jensen's inequality, with $\int_{\R^d}\chi_\delta=1$, the second factor is bounded by
\[\sum_{n\in\Nc_\e(U)}\int_{\e I_n}|\chi_\delta\ast\D(v-w)|^2
\,\lesssim\,
\int_U\bigg(\sum_{n\in\Nc_\e(U)}\int_{\e I_n}\chi_\delta(x-y)\,dx\bigg)|\!\D(v-w)(y)|^2\,dy.\]
The expression into brackets can be estimated as follows,
\begin{eqnarray*}
\sup_{y\in\R^d}\sum_{n\in\Nc_\e(U)}\int_{\e I_n}\chi_\delta(x-y)\,dx
&\le&\sup_{y\in\R^d}\sum_{n}\int_{\frac\e\delta I_n}\chi(x-y)\,dx\\
&\lesssim&(\tfrac\e\delta)^d\sup_{y\in\R^d}\sum_{n}\sup_{y+\frac\e\delta I_n}\chi\\
&\le&(\tfrac\e\delta)^d\sup_{y\in\R^d}\sum_{n}\sup_{\frac\e\delta B_{\ell}(y+x_n)}\chi\\
&\lesssim&\ell^{-d}\sup_{y\in\R^d}\sum_{n}\int_{\frac\e\delta B_{\ell}(y+x_n)}\Big(\sup_{B_{2\ell\frac\e\delta}(z)}\chi\Big)\,dz,
\end{eqnarray*}
and thus, by the hardcore assumption, provided $\e\ell\le\delta$,
\begin{equation*}
\sup_{y\in\R^d}\sum_{n\in\Nc_\e(U)}\int_{\e I_n}\chi_\delta(x-y)\,dx
\,\lesssim\,\ell^{-d}\int_{\R^d}\Big(\sup_{B_2(z)}\chi\Big)\,dz.
\end{equation*}
Inserting this into the above,
we get
\begin{equation}\label{eq:bnd-Jensen-ellscal}
\sum_{n\in\Nc_\e(U)}\int_{\e I_n}|\chi_\delta\ast\D(v-w)|^2
\,\lesssim_\chi\,\ell^{-d}
\int_U|\nabla(v-w)|^2,
\end{equation}
and thus,
\begin{equation*}
\int_U |\nabla T_\e(v,w)|^2
\,\lesssim_\chi\,\kappa \ell^{-\frac d2}\Big(\int_{U}|\nabla T_\e(v,w)|^2\Big)^\frac12\Big(\int_U|\nabla(v-w)|^2\Big)^\frac12,
\end{equation*}
that is,~\eqref{eq:subopt-contract}.

\medskip
\step2 Improved contraction estimate: given $1<p\le2$ and given $\ell\gg_p1$ large enough, we have for all $v,w\in W^{1,p}_0(U)^d$,
\[\|\nabla(T_\e(v)-T_\e(w))\|_{\Ld^p(U)}\,\lesssim_{p,\chi}\,\kappa\ell^{-\frac dp}\|\nabla(v-w)\|_{\Ld^p(U)}.\]
This proves that $T_\e$ is a contraction on $W^{1,p}_0(U)^d$ provided that $\kappa\ell^{-\frac dp}\ll1$ is small enough, which then implies the well-posedness of the hydrodynamic model~\eqref{eq:Stokes} in~$W^{1,p}_0(U)^d$.

\medskip\noindent
We appeal to the dilute deterministic $\Ld^p$ regularity theory developed by Höfer in~\cite{Hofer-19}. Given $1<p\le2$, provided that $\ell\gg_p1$ is large enough (depending on $p$ and dimension $d$), it allows us to deduce almost surely,
\begin{equation*}
\|\nabla(T_\e(v)- T_\e(w))\|_{\Ld^p(U)}
\,\lesssim_p\,\kappa\Big\|\!\sum_{n\in\Nc_\e(U)}\big(f_{n,\e}(\chi_\delta\ast\D(v)_{\nmid \e I_n})-f_{n,\e}(\chi_\delta\ast\D(w)_{\nmid \e I_n})\big)\Big\|_{\Ld^p(U)},
\end{equation*}
and thus, using properties of $\{f_{n,\e}\}_n$,
\begin{eqnarray*}
\lefteqn{\|\nabla(T_\e(v)- T_\e(w))\|_{\Ld^p(U)}}\\
&\lesssim_p&\kappa\bigg(\sum_{n\in\Nc_\e(U)}\int_{\e(I_n+B)}\big|f_{n,\e}(\chi_\delta\ast\D(v)_{\nmid \e I_n})-f_{n,\e}(\chi_\delta\ast\D(w)_{\nmid \e I_n})\big|^p\bigg)^\frac1p\\
&\lesssim&\kappa\bigg(\sum_{n\in\Nc_\e(U)}\int_{\e I_n}|\chi_\delta\ast\D(v-w)|^p\bigg)^\frac1p.
\end{eqnarray*}
Repeating the argument in~\eqref{eq:bnd-Jensen-ellscal}, this proves the claim.

\medskip
\step3 Conclusion.\\
Testing equation~\eqref{eq:approx-ueps-n} with its solution $T_\e(v)$ itself, using boundary conditions and recalling the notation~\eqref{eq:extension-fn}, we find
\begin{equation*}
\int_U|\nabla T_\e(v)|^2\,=\,\int_{U\setminus\Ic_\e(U)}h\cdot T_\e(v)
+\tfrac\kappa\e\sum_{n\in\Nc_\e(U)}\int_{\e(I_n+B)}T_\e(v)\cdot f_{n,\e}(\chi_\delta\ast\D(v)_{\nmid\e I_n}),
\end{equation*}
and thus, by the neutrality condition~\eqref{eq:neutrality}, appealing to Poincaré's inequality and recalling that $f_{n,\e}$ is Lipschitz, arguing exactly as in Step~1,
\begin{eqnarray*}
\int_U|\nabla T_\e(v)|^2
\,\lesssim\,
\kappa^2\ell^{-d}+\int_{U\setminus\Ic_\e(U)}|h|^2
+\kappa^2\sum_{n\in\Nc_\e(U)}\int_{\e I_n}|\chi_\delta\ast\D(v)|^2.
\end{eqnarray*}
Now using the hardcore condition and Young's inequality, we deduce for all~$1<p\le2$,
\begin{eqnarray*}
\int_U|\nabla T_\e(v)|^2
\,\lesssim\,
\kappa^2\ell^{-d}+\int_{U\setminus\Ic_\e(U)}|h|^2
+\kappa^2\|\chi_\delta\|^2_{\Ld^1\cap\Ld^2(U)}\|\nabla v\|_{\Ld^p(U)}^2.
\end{eqnarray*}
This proves that $T_\e$ embeds $W^{1,p}_0(U)^d$ into $H^1_0(U)^d$, so that the solution $u_\e=T_\e(u_\e)$ in $W^{1,p}_0(U)^d$ constructed in Step~2 actually belongs to $H^1_0(U)^d$,
and the a priori estimate~\eqref{lem:well-posed-bd} follows by iteration.
\qed

\subsection{Qualitative homogenization at fixed $\delta$}\label{sec-qual-homog}
\begingroup\allowdisplaybreaks
This section is devoted to the proof of Theorem~\ref{th:homog}.
Before turning to the proof, we argue for the well-posedness of the homogenized equation~\eqref{eq:homog}.
Using $|\Cc(E)|, |\Ff(E)| \lesssim\lambda\langle E\rangle$, cf.~\eqref{eq:Cc-cc-reg}, a perturbative argument as in the proof of Proposition~\ref{lem:well-posed} yields the well-posedness of~\eqref{eq:homog} with $(\bar u_\delta,\bar P_\delta)\in H^1_0(U)^d\times\Ld^2(U)/\R$ provided that $\kappa\lambda\ll1$ is small enough. As $\lambda\lesssim\ell^{-d}$, this holds in particular under the smallness condition~\eqref{eq:cond-small-kappa}.

Interestingly, our proof of qualitative homogenization relies on a semi-quantitative two-scale analysis
and we do not believe that there exists a simpler and purely qualitative proof.
In terms of a suitable limiting profile $\bar u_\e$ (mildly depending on $\e$ and to be identified at the end of the proof), we consider the following two-scale expansions for the solution~$(u_\e,P_\e)$ of the hydrodynamic model~\eqref{eq:Stokes},
\begin{eqnarray}
u_\e&\leadsto&\bar u_\e+\e\psi_E(\tfrac\cdot\e)\partial_E\bar u_\e+\e\kappa \phi_{\chi_\delta\ast\D(u_\e)} (\tfrac\cdot\e),\label{eq:ueps-2scale}\\
P_\e\mathds1_{\R^d\setminus\e\Ic}&\leadsto&\bar P_\e+\bb:\D(\bar u_\e)+\kappa\cc(\chi_\delta\ast\D( u_\e))\nonumber\\
&&\hspace{2cm}+(\Sigma_E\mathds1_{\R^d\setminus\Ic})(\tfrac\cdot\e)\partial_E\bar u_\e+(\kappa\Pi_{\chi_\delta\ast\D(u_\e)(x)}\mathds1_{\R^d\setminus\Ic})(\tfrac x\e),\nonumber
\end{eqnarray}
where $u_\e$ is implicitely extended by $0$ on $\R^d\setminus U$ to ensure that $\chi_\delta\ast \D( u_\e)$ is well-defined,
and where we implicitly sum over $E$ in an orthonormal basis of $\Md_0^\Sym$.
We start with two comments on the form of these two-scale expansions:
\begin{enumerate}[---]
\item The most unusual feature is that correctors $\phi,\Pi$ are evaluated at a background fluid deformation \mbox{$\chi_\delta\ast\D(u_\e)$} depending on the microscopic solution $u_\e$.
This non-standard choice is taken as an intermediate step
and happens to be providential in our proof, where the limit of this background deformation $\chi_\delta\ast\D(u_\e)\sim\chi_\delta\ast\D(\bar u_\e)$ can only be identified at the very end. To our knowledge, the necessity of such a two-step homogenization argument is new to the literature and gives the present problem an interest of its own.
\smallskip\item The choice of the non-oscillating part $\bar P_\e+\bb:\D(\bar u_\e)+\kappa\cc(\chi_\delta\ast\D(u_\e))$ in the two-scale expansion of the pressure is dictated by the proof and is similar to the case of passive suspensions~\cite{DG-21}: the pressure for~\eqref{eq:Stokes} does not converge to the pressure of the homogenized problem in its naïve form.
\end{enumerate}

\smallskip
As $\chi_\delta\ast\D(u_\e)$ is smooth, the maps $(x,y)\mapsto (\phi_{\chi_\delta\ast \D( u_\e)(x)})(y),(\Pi_{\chi_\delta\ast\D(u_\e)(x)}\mathds1_{\R^d\setminus\Ic})(y)$ are  Carath\'eodory functions, which ensures that $x\mapsto \phi_{\chi_\delta\ast \D( u_\e)(x)}(\frac x\e),(\Pi_{\chi_\delta\ast\D(u_\e)(x)}\mathds1_{\R^d\setminus\Ic})(\frac x\e)$ in~\eqref{eq:ueps-2scale} are measurable.
When differentiating such composed functions, some care is needed in the notation.
Given a smooth field $V:\R^d\to \Md_0^\Sym$,
we denote by $\phi_{V}$ the function of two variables $(x,y)\mapsto \phi_{V(x)}(y)$,
and we use the short-hand notation $\phi_V(\frac x\e):=\phi_{V(x)}(\frac x\e)$. We use the following notation for the derivative of $\phi_V$ with $V$ frozen,
\[(\nabla\phi_{|V})(\tfrac x\e)\,:=\, \nabla_y \phi_{V(x)}|_{y=\frac x\e},\]
so that the total derivative is then given by
\begin{equation}\label{e.deriv-not}
\nabla(\e \phi_V)(\tfrac x\e)\,=\,(\nabla \phi_{|V})(\tfrac x\e)+ \e {\df\phi_{V(x),E}(\tfrac x\e)}\otimes {\nabla  (E:V(x))} ,
\end{equation}
where the linearized corrector $\df\phi$ is given in Definition~\ref{def:lin-corr} and where we implicitly sum over $E$ in an orthonormal basis of $\Md_0^\Sym$.
In addition, if $V$ is a smooth random field, we use the following notation for the expectation of $\phi_V$ with $V$ frozen: for any random field~$a$,
\[\E[a(x)\phi_{|V}(\tfrac x\e)]\,:=\,\expec{a(x)\phi_E(\tfrac x\e)}\!|_{E=V(x)}.\]
The same notation is used for other correctors and fluxes.

As usual, correctors are not capturing the relevant behavior close to the boundary of the domain $U$: in particular, the two-scale expansion in~\eqref{eq:ueps-2scale} does not vanish at the boundary. To circumvent this, we proceed by truncating correctors in a neighborhood of the boundary.
We set for abbreviation
\[\partial_{r}U:=\{x\in U:\dist(x,\partial U)<r\},\]
and, given $r_\e\ge4\e$ (which we shall optimize later on), we choose a smooth cut-off function $\eta_\e\in C^\infty_c(U;[0,1])$ such that
\[\eta_\e|_{U\setminus\partial_{2r_\e}U}=1,\qquad \eta_\e|_{\partial_{r_\e}U}=0,\qquad|\nabla \eta_\e|\,\lesssim\,\tfrac1{r_\e},\]
and such that $\eta_\e$ is constant inside each of the fattened particles $\{\e(I_n+\frac12 B)\}_n$.
Note that by definition the set $\Ic_\e(U)$ coincides with $\e\Ic$ on the support of $\eta_\e$,
\begin{equation}\label{eq:eta-eps-coincid/Ic}
\eta_\e\mathds1_{\Ic_\e(U)}=\eta_\e\mathds1_{\e\Ic}.
\end{equation}
In these terms, truncating the two-scale expansions~\eqref{eq:ueps-2scale}, we are led to considering the following truncated two-scale expansion errors,
\[w_\e\,:=\,u_\e-u_\e^{2s},\qquad Q_\e\,:=\,P_\e \mathds1_{U\setminus\Ic_\e(U)}-P_\e^{2s},\]
where the two-scale expansion $(u_\e^{2s},P_\e^{2s})$ of $(u_\e, P_\e)$ is given by
\begin{eqnarray}
u_\e^{2s}(x)&:=& \bar u_\e(x)+\e\eta_\e(x)\psi_E(\tfrac x\e)\partial_E\bar u_\e(x)+\e\kappa\eta_\e(x)\phi_{\chi_\delta\ast \D( u_\e)}(\tfrac x\e),\label{eq:def-2sc-error}\\
P_\e^{2s}(x)&:=&-P_\e^*(x)+\bar P_\e(x)+\eta_\e(x)\bb:\D(\bar u_\e)(x)+\kappa\eta_\e(x)\cc(\chi_\delta\ast \D(u_\e)(x))\nonumber\\
&&\hspace{.5cm}+\eta_\e(x)(\Sigma_E\mathds1_{\R^d\setminus\Ic})(\tfrac x\e)(\partial_E\bar u_\e)(x)+\kappa\eta_\e(x)( \Pi_{\chi_\delta\ast\D(u_\e)}\mathds1_{\R^d\setminus\Ic})(\tfrac x\e),\nonumber
\end{eqnarray}
where we have further added a locally constant pressure field
\begin{equation}\label{eq:def-P*}
P_\e^*\,:=\,P'_{\e}\mathds1_{U\setminus\Ic_\e(U)}+\sum_{n\in\Nc_\e(U)}P''_{\e,n}\mathds1_{\e I_n},
\end{equation}
in terms of some constants $P_\e'$ and $\{P_{\e,n}''\}_n$ to be suitably chosen later on, cf.~\eqref{eq:choice-csts-P}, and
where the limiting profile $(\bar u_\e,\bar P_\e)\in H^1_0(U)^d\times\Ld^2(U)/\R$ is chosen as the unique solution of
\begin{equation}\label{eq:def-barueps}
-\Div(2\Bb_\pas\D(\bar u_\e))+\nabla\bar P_\e\,=\,(1-\lambda)h+\Div(2\kappa \Bb_\act(\chi_\delta\ast \D(u_\e))).
\end{equation}
Again, this equation is viewed as a convenient intermediate step towards the relevant homogenized equation~\eqref{eq:homog}: as in the two-scale expansion~\eqref{eq:def-2sc-error}, the background fluid deformation $\chi_\delta\ast\D(u_\e)$ is expressed here in terms of the microscopic solution $u_\e$ itself and its limit will only be identified at the very end of the proof.
We split the proof into three main steps.

\medskip
\step1 Equation for the two-scale expansion error $(w_\e,Q_\e)$: in the weak sense in $U$,
\begin{multline}\label{eq:eqn-wQ}
-\triangle w_\e +\nabla Q_\e\,=\,
-\Div\Big((J_E\mathds1_{\Ic})(\tfrac\cdot\e)\eta_\e\partial_E\bar u_\e+\kappa(K_{\chi_\delta\ast\D(u_\e)}\mathds1_{\Ic})(\tfrac\cdot\e)\,\eta_\e\Big)\\
-\sum_{n\in\Nc_\e(U)}\Big(\delta_{\e\partial I_n}\sigma(u_\e,P_\e+P_\e'-P_{\e,n}'')\nu+\tfrac\kappa\e f_{n,\e}\big(\chi_\delta\ast\D(u_\e)_{\nmid \e I_n}\big)\mathds1_{\e I_n}\Big)\\
-\tfrac\kappa\e\!\!\!\sum_{n\in\Nc_\e(U)}\Big(\eta_\e f_{n,\e}(\chi_\delta\ast\D(u_\e))-f_{n,\e} \big({\chi_\delta\ast\D(u_\e)}_{\nmid \e I_n}\big)\Big)-\kappa\df F_{\chi_\delta\ast\D(u_\e),E}(\tfrac\cdot\e)\eta_\e\nabla(\chi_\delta\ast\partial_E u_\e)\\
+(1-\eta_\e)(\lambda-\mathds1_{\Ic_\e(U)})h-\kappa\Ff(\chi_\delta\ast\D(u_\e))\nabla\eta_\e\\
-\Div\Big((1-\eta_\e)\big(2(\Bb_\pas-\Id)\D(\bar u_\e)+2\kappa\Bb_\act(\chi_\delta\ast\D(u_\e))\big)\Big)\\
\hspace{-2cm}+\e\Div\Big(
\big(2\psi_{E}\otimes_s-\Id\otimes\psi_E-\Ups_E\big)(\tfrac\cdot\e)\nabla(\eta_\e\partial_E\bar u_\e)
-\eta_\e h\otimes(\triangle^{-1}\nabla\mathds1_{\Ic})(\tfrac\cdot\e)\\
\hspace{2cm}+\kappa\big(2\phi_{\chi_\delta\ast\D(u_\e)}\otimes_s-\Id\otimes\phi_{\chi_\delta\ast\D(u_\e)}-\theta_{\chi_\delta\ast\D(u_\e)}+\gamma_{\chi_\delta\ast\D(u_\e)}\otimes\Id\big)(\tfrac\cdot\e)\nabla\eta_\e\\
+\kappa\big(2\df \phi_{\chi_\delta\ast\D(u_\e),E}\otimes_s-\Id\otimes\df \phi_{\chi_\delta\ast\D(u_\e),E}-\df \theta_{\chi_\delta\ast\D(u_\e),E}\\
\hspace{7cm}+\df\gamma_{\chi_\delta\ast\D(u_\e),E}\otimes\Id\big)(\tfrac\cdot\e)\,\eta_\e\nabla (\chi_\delta\ast \partial_E u_\e)
\Big)\\
+\kappa\e\nabla_i\Big((\triangle^{-1}\nabla_i\df F_{|\chi_\delta\ast\D(u_\e),E})(\tfrac\cdot\e)\eta_\e\nabla(\chi_\delta\ast\partial_Eu_\e)\Big)\\
+\e(\triangle^{-1}\nabla_j\mathds1_{\Ic})(\tfrac\cdot\e) \nabla_j(\eta_\e h)-\kappa\e\gamma_{\chi_\delta\ast\D(u_\e)}(\tfrac\cdot\e)\triangle\eta_\e
-\kappa\e\df\gamma_{\chi_\delta\ast\D(u_\e),E}(\tfrac\cdot\e):\eta_\e\triangle(\chi_\delta\ast \partial_E u_\e)\\
-\kappa\e(\triangle^{-1}\nabla_i\df F_{|\chi_\delta\ast\D(u_\e),E})(\tfrac\cdot\e)\nabla_i(\eta_\e\nabla(\chi_\delta\ast\partial_Eu_\e))\\
+2\kappa\e\big(\df\theta_{\chi_\delta\ast\D(u_\e),E}-\df\gamma_{\chi_\delta\ast\D(u_\e),E}\otimes\Id\big)(\tfrac\cdot\e):\nabla(\chi_\delta\ast\partial_Eu_\e)\otimes\nabla\eta_\e\\
+\kappa\e\big(\df^2 \theta_{\chi_\delta\ast\D(u_\e),E,E'}-\df^2\gamma_{\chi_\delta\ast\D(u_\e),E,E'}\otimes\Id-\triangle^{-1}\nabla\df^2 F_{|\chi_\delta\ast\D(u_\e),E,E'})\big)(\tfrac\cdot\e)\\
:\eta_\e\nabla (\chi_\delta\ast \partial_{E'} u_\e)\otimes\nabla (\chi_\delta\ast \partial_E u_\e).
\end{multline}
We split the proof into six  substeps.

\medskip
\substep{1.1} Reformulation of the equation for $u_\e$:
\begin{multline}\label{eq:ueps-press-ref}
-\triangle u_\e+\nabla(P_\e\mathds1_{U\setminus\Ic_\e(U)}+P_\e^*)
\,=\,h\mathds1_{U\setminus\Ic_\e(U)}
+\tfrac\kappa\e\!\!\sum_{n\in\Nc_\e(U)}f_{n,\e} \big({\chi_\delta\ast\D(u_\e)}_{\nmid \e I_n}\big)\\
-\sum_{n\in\Nc_\e(U)}\Big(\delta_{\e\partial I_n}\sigma(u_\e,P_\e+P_\e'-P_{\e,n}'')\nu
+\tfrac\kappa\e f_{n,\e}\big(\chi_\delta\ast\D(u_\e)_{\nmid \e I_n}\big)\mathds1_{\e I_n}\Big).
\end{multline}
Starting from equation~\eqref{eq:Stokes} in form of~\eqref{e.weak-sense}, we find
\begin{multline*}
-\triangle u_\e+\nabla(P_\e\mathds1_{U\setminus\Ic_\e(U)})\\
\,=\,h\mathds1_{U\setminus\Ic_\e(U)}
+\tfrac\kappa\e\!\!\sum_{n\in\Nc_\e(U)}f_{n,\e} \big({\chi_\delta\ast\D(u_\e)}_{\nmid \e I_n}\big)\mathds1_{\e(I_n+B)\setminus\e I_n}
-\sum_{n\in\Nc_\e(U)}\delta_{\e\partial I_n}\sigma(u_\e,P_\e)\nu.
\end{multline*}
Adding and subtracting the contribution of swimming forces on the particles, this becomes
\begin{multline*}
-\triangle u_\e+\nabla(P_\e\mathds1_{U\setminus\Ic_\e(U)})\,=\,h\mathds1_{U\setminus\Ic_\e(U)}+\tfrac\kappa\e\!\!\sum_{n\in\Nc_\e(U)}f_{n,\e} \big({\chi_\delta\ast\D(u_\e)}_{\nmid \e I_n}\big)\\
-\sum_{n\in\Nc_\e(U)}\Big(\delta_{\e\partial I_n}\sigma(u_\e,P_\e)\nu
+\tfrac\kappa\e f_{n,\e} \big({\chi_\delta\ast\D(u_\e)}_{\nmid \e I_n}\big)\mathds1_{\e I_n}\Big).
\end{multline*}
Adding $\nabla P^*_\e$ to both sides, cf.~\eqref{eq:def-P*}, the claim~\eqref{eq:ueps-press-ref} follows.

{\medskip
\substep{1.2} Equation for the two-scale expansion for the two-scale expansion error $u^{2s}_\e$:
\begin{multline}\label{e.homNL-1re}
-\triangle u^{2s}_\e +\nabla (P^{2s}_\e+P_\e^*)\,=\,
\nabla\Big(\bar P_\e+\eta_\e\bb:\D(\bar u_\e)+\kappa\eta_\e\cc(\chi_\delta\ast \D(u_\e))\Big)\\
-T_\e^1-\kappa T_\e^2-\Div\big(2(1-\eta_\e)\D(\bar u_\e)\big)\\
-\e\Div\Big(2 \psi_{E} (\tfrac\cdot\e)\otimes_s \nabla( \eta_\e\partial_E\bar u_\e)
+2 \kappa\phi_{\chi_\delta\ast\D(u_\e)} (\tfrac\cdot\e)\otimes_s \nabla \eta_\e\\
\hspace{4cm}+2 \kappa\df \phi_{\chi_\delta\ast\D(u_\e),E_\alpha}(\tfrac\cdot\e)\otimes_s\eta_\e \nabla(\chi_\delta\ast\partial_{E_\alpha}u_\e)\Big)
\\
+\e\nabla\Big( \psi_{E} (\tfrac\cdot\e)\cdot \nabla( \eta_\e\partial_E\bar u_\e)
+ \kappa\phi_{\chi_\delta\ast\D(u_\e)}(\tfrac\cdot\e) \cdot \nabla \eta_\e\\
+\kappa \df \phi_{\chi_\delta\ast\D(u_\e),E_\alpha}(\tfrac\cdot\e)\cdot \eta_\e\nabla(\chi_\delta\ast \partial_{E_\alpha}u_\e)\Big),
\end{multline}
where
\begin{eqnarray*}
T_\e^1&:=&\Div\Big(\big(2(\D (\psi_{E})+E)-\Sigma_E\mathds1_{\R^d\setminus\Ic}\Id\big)(\tfrac\cdot\e)\,\eta_\e\partial_E\bar u_\e\Big),\\
T_\e^2&:=&\Div\Big(\big(2\D (\phi_{|\chi_\delta\ast\D(u_\e)})-\Pi_{\chi_\delta\ast\D(u_\e)}\mathds1_{\R^d\setminus\Ic}\Id\big)(\tfrac\cdot\e)\,\eta_\e\Big)
\end{eqnarray*}
are two terms that we shall further reformulate in the upcoming substeps.

\medskip\noindent
For the two-scale expansion error $(u_\e^{2s},P_\e^{2s})$ defined in~\eqref{eq:def-2sc-error}, we have
\begin{multline}\label{e.homNL-1}
-\triangle u^{2s}_\e +\nabla (P^{2s}_\e+P_\e^*)\,=\,
-\triangle\bar u_\e+\nabla\Big(\bar P_\e+\eta_\e\bb:\D(\bar u_\e)+\kappa\eta_\e\cc(\chi_\delta\ast \D(u_\e))\Big)\\
-\triangle\Big(\e\psi_E(\tfrac\cdot\e)\eta_\e\partial_E\bar u_\e+\e\kappa\phi_{\chi_\delta\ast \D( u_\e)}(\tfrac \cdot\e)\eta_\e\Big)\\
+\nabla\Big((\Sigma_E\mathds1_{\R^d\setminus\Ic})(\tfrac\cdot\e)\eta_\e\partial_E\bar u_\e+\kappa(\Pi_{\chi_\delta\ast\D(u_\e)}\mathds1_{\R^d\setminus\Ic})(\tfrac\cdot\e)\eta_\e\Big).
\end{multline}
It remains to reformulate the penultimate right-hand side term.
By the identity
$$
\triangle h = \Div (2 \D(h))-\nabla \Div (h),
$$
we can write
\begin{eqnarray*}
\triangle\big(\e\psi_{E}(\tfrac\cdot\e)\eta_\e\partial_E\bar u_\e\big)&=& \Div \big(2\D (\e\psi_{E}(\tfrac\cdot\e)\eta_\e\partial_E\bar u_\e)\big)-\nabla \Div\big(\e\psi_{E}(\tfrac\cdot\e)\eta_\e\partial_E\bar u_\e\big),\\
\triangle \big(\e\phi_{\chi_\delta\ast\D(u_\e)}(\tfrac\cdot\e)\eta_\e\big)&=& \Div \big(2\D (\e\phi_{\chi_\delta\ast\D(u_\e)}(\tfrac\cdot\e)\eta_\e)\big)-\nabla\Div \big(\e\phi_{\chi_\delta\ast\D(u_\e)}(\tfrac\cdot\e)\eta_\e\big).
\end{eqnarray*}
First, as $\Div(\psi_E)=0$, we find
\begin{eqnarray}
\D (\e\psi_E(\tfrac\cdot\e)\eta_\e\partial_E\bar u_\e)&=&\D (\psi_{E})(\tfrac\cdot\e)\eta_\e\partial_E\bar u_\e+\e \psi_{E} (\tfrac\cdot\e)\otimes_s \nabla( \eta_\e\partial_E\bar u_\e),
\nonumber\\
\Div (\e\psi_E(\tfrac\cdot\e)\eta_\e\partial_E\bar u_\e)&=&\e \psi_{E} (\tfrac\cdot\e)\cdot \nabla( \eta_\e\partial_E\bar u_\e) .\label{e.div-2s2}
\end{eqnarray}
Second, as $\Div(\phi_{E})=0$, further recalling \eqref{e.deriv-not}, we find
\begin{eqnarray}
\D (\e\phi_{\chi_\delta\ast\D(u_\e)}(\tfrac\cdot\e)\eta_\e)&=&\D(\phi_{|\chi_\delta\ast\D(u_\e)})(\tfrac\cdot\e)\eta_\e+\e\df\phi_{\chi_\delta\ast\D(u_\e),E}(\tfrac\cdot\e) \otimes_s \eta_\e\nabla(\chi_\delta\ast\partial_{E}u_\e)\nonumber\\
&&\qquad+\e \phi_{\chi_\delta\ast\D(u_\e)} (\tfrac\cdot\e)\otimes_s \nabla \eta_\e,\nonumber\\
\Div (\e\phi_{\chi_\delta\ast\D(u_\e)}(\tfrac\cdot\e)\eta_\e)&=&\e \phi_{\chi_\delta\ast\D(u_\e)}(\tfrac\cdot\e) \cdot \nabla \eta_\e\nonumber\\
&&\qquad+\e\df\phi_{\chi_\delta\ast\D(u_\e),E}(\tfrac\cdot\e)\cdot  \eta_\e\nabla(\chi_\delta\ast\partial_Eu_\e),\label{e.div-2s1}
\end{eqnarray}
where we recall that we implicitly sum over $E$ in an orthonormal basis of $\Md^\Sym_0$, and where the linearized corrector $\df \phi$ is given in Definition~\ref{def:lin-corr}.
In these terms, the penultimate right-hand side term in~\eqref{e.homNL-1} takes the form
\begin{eqnarray*}
\lefteqn{\triangle\Big(\e\psi_E(\tfrac\cdot\e)\eta_\e\partial_E\bar u_\e+\e\kappa\phi_{\chi_\delta\ast\D(u_\e)}(\tfrac\cdot\e)\eta_\e\Big)}\\
&=&\Div\Big(2\D (\psi_{E})(\tfrac\cdot\e)\eta_\e\partial_E\bar u_\e+2\kappa\D (\phi_{|\chi_\delta\ast\D(u_\e)})(\tfrac\cdot\e)\eta_\e
+2\e \psi_{E} (\tfrac\cdot\e)\otimes_s \nabla( \eta_\e\partial_E\bar u_\e)\\
&&\hspace{1cm}+\,2\e \kappa\phi_{\chi_\delta\ast\D(u_\e)} (\tfrac\cdot\e)\otimes_s \nabla \eta_\e
+2\e \kappa\df \phi_{\chi_\delta\ast\D(u_\e),E_\alpha}(\tfrac\cdot\e)\otimes_s\eta_\e \nabla(\chi_\delta\ast\partial_{E_\alpha}u_\e)\Big)
\\
&-&\!\!\!\nabla\Big(\e \psi_{E} (\tfrac\cdot\e)\cdot \nabla( \eta_\e\partial_E\bar u_\e)
+\e \kappa\phi_{\chi_\delta\ast\D(u_\e)}(\tfrac\cdot\e) \cdot \nabla \eta_\e\\
&&\hspace{6cm}+\e\kappa \df \phi_{\chi_\delta\ast\D(u_\e),E_\alpha}(\tfrac\cdot\e)\cdot \eta_\e\nabla(\chi_\delta\ast \partial_{E_\alpha}u_\e)\Big).
\end{eqnarray*}
Inserting this identity into~\eqref{e.homNL-1}, using that $\Div (\bar u_\e) =0$, and decomposing
\[\triangle\bar u_\e\,=\,\Div(2\D(\bar u_\e))\,=\,\Div\big(2(1-\eta_\e)\D(\bar u_\e)\big)+\Div(2E\eta_\e \partial_{E}\bar u_\e),\]
the claim~\eqref{e.homNL-1re} follows.
}

\medskip
\substep{1.3} Proof of
\begin{multline}\label{eq:reform-Dpsi-Jups}
T_\e^1=\Div\Big(\big(2(\D (\psi_{E})+E)-\Sigma_E\mathds1_{\R^d\setminus\Ic}\Id\big)(\tfrac\cdot\e)\,\eta_\e\partial_E\bar u_\e\Big)\\
\,=\,\Div\big(2\eta_\e\Bb_\pas\D(\bar u_\e)\big)+\nabla\big(\eta_\e\bb:\D(\bar u_\e)\big)
-\e\Div\big(\Ups_E(\tfrac\cdot\e)\nabla(\eta_\e\partial_E\bar u_\e)\big)\\
-\Div\big((J_E\mathds1_{\Ic})(\tfrac\cdot\e)\eta_\e\partial_E\bar u_\e\big).
\end{multline}
In terms of the extended flux $J_E$, cf.~Lemma~\ref{lem:Jzet}, we have
\begin{equation*}
\Div\Big(\big(2(\D (\psi_{E})+E)-\Sigma_E\mathds1_{\R^d\setminus\Ic}\Id\big)(\tfrac\cdot\e)\,\eta_\e\partial_E\bar u_\e\Big)
\,=\,\Div\big((J_E\mathds1_{\R^d\setminus\Ic})(\tfrac\cdot\e)\,\eta_\e\partial_E\bar u_\e\big).
\end{equation*}
Recalling that $\Div(J_E)=0$, we can decompose
\begin{multline*}
\Div\Big(\big(2(\D (\psi_{E})+E)-\Sigma_E\mathds1_{\R^d\setminus\Ic}\Id\big)(\tfrac\cdot\e)\,\eta_\e\partial_E\bar u_\e\Big)\\
\,=\,J_E(\tfrac\cdot\e)\nabla(\eta_\e\partial_E\bar u_\e)-\Div\big((J_E\mathds1_{\Ic})(\tfrac\cdot\e)\,\eta_\e\partial_E\bar u_\e\big).
\end{multline*}
As Lemma~\ref{lem:Jzet} further yields $J_{E}-\expec{J_E}=\Div(\Ups_{E})$ and $\expec{J_E}=2\Bb_\pas E+(\bb:E)\Id$, where the flux corrector~$\Ups_E$ is skew-symmetric in its last two indices, the claim~\eqref{eq:reform-Dpsi-Jups}  follows.
For completeness, we recall the standard argument based on skew-symmetry of $\Ups$ that leads to this identity:
for any smooth scalar field $\zeta$, we have
\begin{eqnarray}
(J_E-\expec{J_E})\nabla\zeta &=&\Div(\Ups_{E})\nabla \zeta \nonumber \\
&=&\ee_i(\nabla_k\Ups_{E;ijk})\nabla_j \zeta \nonumber\\
&=&\ee_i\nabla_k\big(\!\underbrace{\Ups_{E;ijk}}_{=-\Ups_{E;ikj}}\nabla_j \zeta\big)-\underbrace{\ee_i\Ups_{E;ijk}\nabla_{jk}^2 \zeta}_{=0}\nonumber \\
&=&-\Div(\Ups_{E}\nabla \zeta).\label{antoine.8}
\end{eqnarray}

\medskip
\substep{1.4} Proof of
\begin{multline}\label{eq:reform-Dphi-Ktheta}
T_\e^2=\Div\Big(\big(2\D (\phi_{|\chi_\delta\ast\D(u_\e)})-\Pi_{\chi_\delta\ast\D(u_\e)}\mathds1_{\R^d\setminus\Ic}\Id\big)(\tfrac\cdot\e)\,\eta_\e\Big)\\
\,=\,\Div(2\eta_\e\Cc(\chi_\delta\ast\D(u_\e)))+\nabla(\eta_\e\cc(\chi_\delta\ast\D(u_\e)))\\
-\tfrac1\e\eta_\e\!\!\sum_{n\in\Nc_\e(U)}f_{n,\e}(\chi_\delta\ast\D(u_\e))
-\Div\big((K_{\chi_\delta\ast\D(u_\e)}\mathds1_{\Ic})(\tfrac\cdot\e)\,\eta_\e\big)\\
-\e\Div\Big(\big(\theta_{\chi_\delta\ast\D(u_\e)}-\gamma_{\chi_\delta\ast\D(u_\e)}\otimes\Id\big)(\tfrac\cdot\e)\nabla\eta_\e\Big)\\
-\e\Div\Big(\big(\df \theta_{\chi_\delta\ast\D(u_\e),E}-\df\gamma_{\chi_\delta\ast\D(u_\e),E}\otimes\Id\big)(\tfrac\cdot\e)\,\eta_\e\nabla (\chi_\delta\ast \partial_E u_\e)\Big)\\
-\e\gamma_{\chi_\delta\ast\D(u_\e)}(\tfrac\cdot\e)\triangle\eta_\e
-\e\df\gamma_{\chi_\delta\ast\D(u_\e),E}(\tfrac\cdot\e):\eta_\e\triangle(\chi_\delta\ast \partial_E u_\e)\\
+2\e\big(\df\theta_{\chi_\delta\ast\D(u_\e),E}-\df\gamma_{\chi_\delta\ast\D(u_\e),E}\otimes\Id\big)(\tfrac\cdot\e):\nabla(\chi_\delta\ast\partial_Eu_\e)\otimes\nabla\eta_\e\\
+\e\big(\df^2 \theta_{\chi_\delta\ast\D(u_\e),E,E'}-\df^2\gamma_{\chi_\delta\ast\D(u_\e),E,E'}\otimes\Id\big)(\tfrac\cdot\e)\\
:\eta_\e\nabla (\chi_\delta\ast \partial_{E'} u_\e)\otimes\nabla (\chi_\delta\ast \partial_E u_\e).
\end{multline}
In terms of the extended flux $K_E$, cf.~Lemma~\ref{lem:KE}, we have
\begin{equation*}
\Div\Big(\big(2\D (\phi_{|\chi_\delta\ast\D(u_\e)})-\Pi_{\chi_\delta\ast\D(u_\e)}\mathds1_{\R^d\setminus\Ic}\Id\big)(\tfrac\cdot\e)\,\eta_\e\Big)
\,=\,\Div\big((K_{\chi_\delta\ast\D(u_\e)}\mathds1_{\R^d\setminus\Ic})(\tfrac\cdot\e)\,\eta_\e\big).
\end{equation*}
As Lemma~\ref{lem:KE} further yields $\expec{K_E}=2\Cc(E)+\cc(E)\Id$ and $\Div(K_E)=-\sum_n f_n(E)$,
and appealing to~\eqref{e.deriv-not} and~\eqref{eq:eta-eps-coincid/Ic}, we find
\begin{multline}\label{eq:decomp-Dphi-Pi-div}
\Div\Big(\big(2\D (\phi_{|\chi_\delta\ast\D(u_\e)})-\Pi_{\chi_\delta\ast\D(u_\e)}\mathds1_{\R^d\setminus\Ic}\Id\big)(\tfrac\cdot\e)\,\eta_\e\Big)\\
\,=\,\Div(2\eta_\e\Cc(\chi_\delta\ast\D(u_\e)))+\nabla(\eta_\e\cc(\chi_\delta\ast\D(u_\e)))
-\tfrac1\e\eta_\e\!\!\sum_{n\in\Nc_\e(U)}f_{n,\e}(\chi_\delta\ast\D(u_\e))\\
-\Div\big((K_{\chi_\delta\ast\D(u_\e)}\mathds1_{\Ic})(\tfrac\cdot\e)\,\eta_\e\big)
+\big(K_{\chi_\delta\ast\D(u_\e)}(\tfrac\cdot\e)-\expec{K_{|\chi_\delta\ast\D(u_\e)}}\!\big)\nabla\eta_\e\\
+\big(\df K_{\chi_\delta\ast\D(u_\e),E}(\tfrac\cdot\e)-\expec{\df K_{|\chi_\delta\ast\D(u_\e),E}(\tfrac\cdot\e)}\!\big)\,\eta_\e\nabla(\chi_\delta\ast\partial_Eu_\e).
\end{multline}
It remains to reformulate the last two right-hand side terms.
As Lemma~\ref{lem:KE} yields
\[K_{E}-\expec{K_{E}}\,=\,\Div(\theta_{E})+\nabla\gamma_{E},\]
we get
\begin{equation*}
\big( K_{\chi_\delta\ast\D(u_\e)}(\tfrac\cdot\e)-\expec{K_{|\chi_\delta\ast\D(u_\e)}}\!\big)\nabla\eta_\e
\,=\,\big(\Div(\theta_{|\chi_\delta\ast\D(u_\e)})
+\nabla\gamma_{|\chi_\delta\ast\D(u_\e)}\big)(\tfrac\cdot\e)\nabla\eta_\e,
\end{equation*}
and thus, by Leibniz' rule, using~\eqref{e.deriv-not} and the skew-symmetry of $\theta$ (whence the minus sign of the first right-hand side term, cf.~\eqref{antoine.8}),
\begin{multline*}
\big( K_{\chi_\delta\ast\D(u_\e)}(\tfrac\cdot\e)-\expec{K_{|\chi_\delta\ast\D(u_\e)}}\!\big)\nabla\eta_\e\\
\,=\,-\e\Div\Big(\big(\theta_{\chi_\delta\ast\D(u_\e)}-\gamma_{\chi_\delta\ast\D(u_\e)}\otimes\Id\big)(\tfrac\cdot\e)\nabla\eta_\e\Big)
-\e\gamma_{\chi_\delta\ast\D(u_\e)}(\tfrac\cdot\e)\triangle\eta_\e\\
+\e\big(\df\theta_{\chi_\delta\ast\D(u_\e),E}-\df\gamma_{\chi_\delta\ast\D(u_\e),E}\otimes\Id\big)(\tfrac\cdot\e):\nabla(\chi_\delta\ast\partial_Eu_\e)\otimes\nabla\eta_\e.
\end{multline*}
Similarly, with the notation of Definition~\ref{def:lin-corr}, we have
\begin{multline*}
\big( \df K_{\chi_\delta\ast\D(u_\e),E}(\tfrac\cdot\e)-\expec{\df K_{|\chi_\delta\ast\D(u_\e),E}}\!\big)\,\eta_\e\nabla(\chi_\delta\ast \partial_E u_\e)\\
\,=\,\big(\Div(\df \theta_{|\chi_\delta\ast\D(u_\e),E})+\nabla\df\gamma_{|\chi_\delta\ast\D(u_\e),E}\big)(\tfrac\cdot\e)\,\eta_\e\nabla (\chi_\delta\ast \partial_E u_\e),
\end{multline*}
and thus, by Leibniz' rule, using~\eqref{e.deriv-not} and the skew-symmetry of $\df\theta$,
\begin{multline*}
\big( \df K_{\chi_\delta\ast\D(u_\e),E}(\tfrac\cdot\e)-\expec{\df K_{|\chi_\delta\ast\D(u_\e),E}}\!\big)\,\eta_\e\nabla(\chi_\delta\ast \partial_E u_\e)\\
\,=\,-\e\Div\Big(\big(\df \theta_{\chi_\delta\ast\D(u_\e),E}-\df\gamma_{\chi_\delta\ast\D(u_\e),E}\otimes\Id\big)(\tfrac\cdot\e)\,\eta_\e\nabla (\chi_\delta\ast \partial_E u_\e)\Big)\\
+\e\big(\df \theta_{\chi_\delta\ast\D(u_\e),E}-\df\gamma_{\chi_\delta\ast\D(u_\e),E}\otimes\Id\big)(\tfrac\cdot\e):\nabla(\eta_\e\nabla (\chi_\delta\ast \partial_E u_\e))\\
+\e\big(\df^2 \theta_{\chi_\delta\ast\D(u_\e),E,E'}-\df^2\gamma_{\chi_\delta\ast\D(u_\e),E,E'}\otimes\Id\big)(\tfrac\cdot\e):\eta_\e\nabla (\chi_\delta\ast \partial_{E'} u_\e)\otimes\nabla (\chi_\delta\ast \partial_E u_\e).
\end{multline*}
Inserting this into~\eqref{eq:decomp-Dphi-Pi-div}, the claim~\eqref{eq:reform-Dphi-Ktheta} follows.

\medskip
\substep{1.5} In order to reconstruct $\Bb_\act(\chi_\delta\ast \D(u_\e))$, we shall need
the following identity:
\begin{multline}\label{eq:decomp-barF}
\Div(\eta_\e\Ff(\chi_\delta\ast\D(u_\e)))
\,=\,\df F_{\chi_\delta\ast\D(u_\e),E}(\tfrac\cdot\e)\eta_\e\nabla(\chi_\delta\ast\partial_E u_\e)+\Ff(\chi_\delta\ast\D(u_\e))\nabla\eta_\e\\
-\e\nabla_i\Big((\triangle^{-1}\nabla_i\df F_{|\chi_\delta\ast\D(u_\e),E})(\tfrac\cdot\e)\eta_\e\nabla(\chi_\delta\ast\partial_Eu_\e)\Big)\\
+\e(\triangle^{-1}\nabla_i\df F_{|\chi_\delta\ast\D(u_\e),E})(\tfrac\cdot\e)\nabla_i(\eta_\e\nabla(\chi_\delta\ast\partial_Eu_\e))\\
+\e(\triangle^{-1}\nabla_i\df^2 F_{|\chi_\delta\ast\D(u_\e),E,E'})(\tfrac\cdot\e)\eta_\e\nabla(\chi_\delta\ast\partial_Eu_\e)\nabla_i(\chi_\delta\ast\partial_{E'}(u_\e)).
\end{multline}
As $\expec{F_E}=\Ff(E)$, cf.~\eqref{eq:def-EF}, and using the notation in Definition~\ref{def:lin-corr}, we can decompose
\begin{eqnarray*}
\df F_{E,E'}&=&\partial_{E'}\Ff(E)+\big(\df F_{E,E'}-\expec{\df F_{E,E'}}\!\big)\\
&=&\partial_{E'}\Ff(E)+\nabla_i\triangle^{-1}\nabla_i\df F_{E,E'}.
\end{eqnarray*}
Using this together with Leibniz' rule, we find
\begin{multline*}
\df F_{\chi_\delta\ast\D(u_\e),E}(\tfrac\cdot\e)\eta_\e\nabla(\chi_\delta\ast\partial_E u_\e)
\,=\,\partial_E\Ff(\chi_\delta\ast\D(u_\e))\eta_\e\nabla(\chi_\delta\ast\partial_E u_\e)\\
+\e\nabla_i\Big((\triangle^{-1}\nabla_i\df F_{|\chi_\delta\ast\D(u_\e),E})(\tfrac\cdot\e)\Big)\eta_\e\nabla(\chi_\delta\ast\partial_Eu_\e)\\
-\e(\triangle^{-1}\nabla_i\df^2 F_{|\chi_\delta\ast\D(u_\e),E,E'})(\tfrac\cdot\e)\eta_\e\nabla(\chi_\delta\ast\partial_Eu_\e)\nabla_i(\chi_\delta\ast\partial_{E'}(u_\e)).
\end{multline*}
Further reformulating the first right-hand side term, noting that
\[\partial_E\Ff(\chi_\delta\ast\D(u_\e))\nabla(\chi_\delta\ast\partial_E u_\e)\,=\,\Div(\Ff(\chi_\delta\ast\D(u_\e))),\]
the claim~\eqref{eq:decomp-barF} follows.

\medskip
\substep{1.6} Proof of~\eqref{eq:eqn-wQ}.\\
Subtracting~\eqref{eq:ueps-press-ref} and~\eqref{e.homNL-1re}, inserting identities~\eqref{eq:reform-Dpsi-Jups}, \eqref{eq:reform-Dphi-Ktheta}, and~\eqref{eq:decomp-barF}, using equation~\eqref{eq:def-barueps} for~$\bar u_\e$,
and decomposing
\begin{eqnarray*}
h\mathds1_{U\setminus\Ic_\e(U)}-(1-\lambda)h&=&(\lambda-\mathds1_{\Ic_\e(U)})(1-\eta_\e)h+(\lambda-\mathds1_{\e\Ic})\eta_\e h\\
&=&(\lambda-\mathds1_{\Ic_\e(U)})(1-\eta_\e)h\\
&&\hspace{0.5cm}-\e\nabla_j\Big((\triangle^{-1}\nabla_j\mathds1_{\Ic})(\tfrac\cdot\e)\eta_\e h\Big)+\e(\triangle^{-1}\nabla_j\mathds1_{\Ic})(\tfrac\cdot\e) \nabla_j(\eta_\e h),
\end{eqnarray*}
the claim~\eqref{eq:eqn-wQ} follows after straightforward simplifications.

\medskip

\noindent In the rest of the proof, for notational convenience, we do not make explicit the dependence of estimates wrt $\kappa$ and $\ell$.

\medskip
\step2 Energy estimate for~\eqref{eq:eqn-wQ}: for all $K\ge1$,  
\begin{multline}\label{eq:est-main-simpl}
\int_U|\nabla w_\e|^2
\,\lesssim\,
\tfrac1{K}\int_U(Q_\e)^2
+\Big(1+\|(h,\nabla\bar u_\e)\|_{W^{1,\infty}(U)}^2+\|\chi_\delta\ast\D(u_\e)\|_{W^{2,\infty}(U)}^6\Big)\\
\times\bigg(
\int_{\partial_{3r_\e}U}|(1,\nabla\psi,\Sigma\mathds1_{\R^d\setminus\Ic},\nabla\phi,\Pi\mathds1_{\R^d\setminus\Ic})(\tfrac\cdot\e)|^2\\
+\e^2K\int_U\big(1+r_\e^{-2}\mathds1_{\partial_{3r_\e}U}\big)|(1,\psi,\nabla\psi,\Ups,\phi,\theta,\gamma,\df \phi,\nabla\df \phi,\df \theta,\df\gamma,\df^2\theta,\df^2\gamma,\\
\triangle^{-1}\nabla\mathds1_{\Ic},\triangle^{-1}\nabla\df F,\triangle^{-1}\nabla\df^2F)(\tfrac\cdot\e)|^2\bigg),
\end{multline}
where we use the following notation for correctors,
\begin{equation}\label{eq:notation-|psi|}
|\psi|:=\sup_{E}\,|E|^{-1}|\psi_E|,
\end{equation}
and similarly for $|\Ups|$. For the active corrector $\phi$, which depends nonlinearly on the direction~$E$, as well as for $\theta,\gamma$, we rather set
\begin{equation}\label{eq:notation-|phi|}
|\phi|\,:=\,\sup_{|E|\le C_\delta(h)}\langle E\rangle^{-1}|\phi_E|,
\end{equation}
where the deterministic constant $C_\delta(h)>0$ is chosen such that almost surely
\[\|\chi_\delta\ast\D(u_\e)\|_{\Ld^\infty(U)}\,\le\,\|\chi_\delta\|_{\Ld^2(\R^d)}\|\nabla u_\e\|_{\Ld^2(U)}\,\le\,C_\delta(h).\]
Such a constant can be chosen in view of~\eqref{lem:well-posed-bd}.
For the linearized correctors $\df\phi$ and $\df^2\phi$, as well as for $\df\theta,\df\gamma,\df^2\theta,\df^2\gamma,\triangle^{-1}\nabla\df F,\triangle^{-1}\nabla\df^2F$, we similarly set
\begin{eqnarray*}
|\df\phi|&:=&\sup_{|E|\le C_\delta(h)}\sup_{E'}\,|E'|^{-1}|\df\phi_{E,E'}|,\\
|\df^2\phi|&:=&\sup_{|E|\le C_\delta(h)}\sup_{E',E''}\,|E'|^{-1}|E''|^{-1}|\df^2\phi_{E,E',E''}|.
\end{eqnarray*}
By Poincar\'e's inequality in form of 
\begin{eqnarray*}
\sup_{\e(I_n+B)} \Big|\nabla \bar u_\e -\fint_{\e(I_n+B)} \nabla \bar u_\e\Big|&\lesssim&\e \|\nabla^2 \bar u_\e\|_{\Ld^\infty(U)},\\
\sup_{\e(I_n+B)} \Big|\chi_\delta \ast \D(u_\e)-\fint_{\e(I_n+B)} \chi_\delta*\D(u_\e) \Big|&\lesssim&\e \|\chi_\delta \ast\D(u_\e)\|_{W^{1,\infty}(U)},
\end{eqnarray*}
the claim~\eqref{eq:est-main-simpl} follows from 
\begin{multline}\label{eq:est-main}
\int_U|\nabla w_\e|^2\,\lesssim\,\tfrac1{K}\int_U(Q_\e)^2\\
+\int_{\partial_{3r_\e}U}|(1,\nabla\psi,\Sigma\mathds1_{\R^d\setminus \Ic},\nabla\phi,\Pi\mathds1_{\R^d\setminus\Ic})(\tfrac\cdot\e)|^2\big(1+|h|^2+|\nabla\bar u_\e|^2+[\nabla\bar u_\e]_{4\e}^2+|\chi_\delta\ast\D(u_\e)|^2\big)\\
+\e^2r_\e^{-2}K\int_{\partial_{3r_\e}U}|(\psi,\Ups,\phi,\theta,\gamma,\df \phi)(\tfrac\cdot\e)|^2\big(1+|\nabla\bar u_\e|^2+|\chi_\delta\ast \D(u_\e)|^2\big)\\
+\e^2K\int_U|(1,\psi,\Ups,\phi,\df \phi,\df\theta,\df\gamma,\df^2\theta,\df^2\gamma,\triangle^{-1}\nabla\mathds1_{\Ic},\triangle^{-1}\nabla\df F,\triangle^{-1}\nabla\df^2 F)(\tfrac\cdot\e)|^2\\
\hspace{1cm}\times\Big(|\langle\nabla\rangle h|^2+|\nabla^2\bar u_\e|^2+|\nabla\langle\nabla\rangle \chi_\delta\ast \D(u_\e)|^2+|\nabla\chi_\delta\ast \D(u_\e)|^4\\
\hspace{7cm}+|\nabla(\chi_\delta\ast\partial_E u_\e)|^2\big(1+[\chi_\delta\ast\D(u_\e)]_{4\e}^4\big)\Big)\\
+K\sum_n\int_{\e(I_n+B)}|(\psi,\nabla\psi)(\tfrac\cdot\e)|^2\Big|\nabla\bar u_\e-\fint_{\e(I_n+B)}\nabla\bar u_\e\Big|^2
\\
+K\sum_n\int_{\e(I_n+B)}|(\df\phi,\nabla\df\phi)(\tfrac\cdot\e)|^2\Big|\chi_\delta\ast\D( u_\e)-\fint_{\e(I_n+B)}\chi_\delta\ast\D( u_\e)\Big|^2,
\end{multline}
where we use the short-hand notation $[g]_{4 \e}(x) := (\fint_{B_{4\e}(x)} |g|^2)^{\frac12}$.
We split the proof of~\eqref{eq:est-main}  into seven  substeps.

\medskip
\substep{2.1} Preliminary.\\
In order to obtain~\eqref{eq:est-main}, we may wish to test equation~\eqref{eq:eqn-wQ} with $w_\e$ itself. However, $w_\e$ is not rigid inside particles, which prevents us from taking advantage of the boundary conditions. To circumvent this issue, we make use of the following truncation maps $T_0^\e,T_1^\e$: for all $g\in C^\infty_b(\R^d)^d$,
\begin{eqnarray*}
T_0^\e[g]&:=&(1-\rho^\e)g+\sum_n\rho_n^\e\Big(\fint_{\e(I_n+  B)}g\Big),\\
T_1^\e[g]&:=&(1-\rho^\e)g+\sum_n\rho_n^\e\bigg(\Big(\fint_{\e(I_n+  B)}g\Big)+(\cdot-\e x_n)_j\Big(\fint_{\e(I_n+ B)}\partial_j g\Big)\bigg),
\end{eqnarray*}
where for all $n$ we have chosen a cut-off function $\rho_n^\e\in C^\infty_c(\R^d;[0,1])$ with
\[\rho_n^\e|_{\e (I_n+\frac14B)}=1,\qquad\rho_n^\e|_{\R^d\setminus\e(I_n+\frac12B)}=0,\qquad|\nabla\rho_n^\e|\lesssim\tfrac1\e,\]
and where we have set for abbreviation $\rho^\e:=\sum_n\rho_n^\e$.
In these terms, we shall test~\eqref{eq:eqn-wQ} with  the following modification of the two-scale expansion error $w_\e$, cf.~\eqref{eq:def-2sc-error},
\begin{equation*}
\tilde w_\e\,:=\,u_\e-T_1^\e[\bar u_\e]-\e\eta_\e\psi_E(\tfrac\cdot\e)T_0^\e[\partial_E\bar u_\e]-\e\kappa \eta_\e\phi_{T_0^\e[\chi_\delta\ast \D( u_\e)]}(\tfrac \cdot\e).
\end{equation*}
Testing equation~\eqref{eq:eqn-wQ} with $\tilde w_\e$,
we obtain
\begin{equation}\label{eq:decomp-I0-3}
I_0^\e\,=\,I_1^\e+I_2^\e+I_3^\e+I_4^\e+I_5^\e,
\end{equation}
in terms of
\begin{eqnarray*}
I_0^\e&:=&\int_U\nabla\tilde w_\e:\nabla w_\e-\int_U Q_\e\,\Div(\tilde w_\e),
\\
I_1^\e&:=&\int_U\eta_\e\nabla \tilde w_\e:\Big((J_E\mathds1_{\Ic})(\tfrac\cdot\e)\partial_E\bar u_\e+\kappa (K_{\chi_\delta\ast \D(u_\e)}\mathds1_{\Ic})(\tfrac\cdot\e)\Big)\\
I_2^\e&:=&-\sum_{n\in\Nc_\e(U)}\bigg(\int_{\e\partial I_n}\tilde w_\e\cdot\sigma(u_\e,P_\e+P_\e'-P_{\e,n}'')\nu+\tfrac\kappa\e \int_{\e I_n} \tilde w_\e\cdot f_{n,\e}\big(\chi_\delta\ast\D(u_\e)_{\nmid \e I_n}\big)\bigg)\\
I_3^\e&:=&-\tfrac\kappa\e\sum_{n\in\Nc_\e(U)}\int_{\e(I_n+B)}\eta_\e\tilde w_\e\cdot\Big(f_{n,\e}(\chi_\delta\ast\D(u_\e))-f_{n,\e} \big({\chi_\delta\ast\D(u_\e)}_{\nmid \e I_n}\big)\Big)\\
&&-\kappa\int_{U}\eta_\e\tilde w_\e\otimes\nabla(\chi_\delta\ast\partial_E u_\e):\df F_{\chi_\delta\ast\D(u_\e),E}(\tfrac\cdot\e),\\
I_4^\e&:=&
\int_U(1-\eta_\e)(\lambda-\mathds1_{\Ic_\e(U)})\tilde w_\e\cdot h
+\tfrac\kappa\e\!\!\!\sum_{n\in\Nc_\e(U)}\int_{\e(I_n+B)}(1-\eta_\e)\tilde w_\e\cdot f_{n,\e} \big({\chi_\delta\ast\D(u_\e)}_{\nmid \e I_n}\big)\\
&&+2\int_U(1-\eta_\e)\D(\tilde w_\e):\big((\Bb_\pas - \Id)\D(\bar u_\e)+\kappa\Bb_\act(\chi_\delta\ast \D(u_\e))\big)\\
&&-\kappa\int_U\tilde w_\e\otimes\nabla\eta_\e:\Ff(\chi_\delta\ast\D(u_\e))
\end{eqnarray*}
and
\begin{eqnarray*}
I_5^\e
&:=&-\e\int_U\nabla\tilde w_\e:\Big(
\big(2\psi_{E}\otimes_s-\Id\otimes\psi_{E}-\Ups_E\big)(\tfrac\cdot\e)\nabla(\eta_\e\partial_E\bar u_\e)
-\eta_\e h\otimes(\triangle^{-1}\nabla\mathds1_{\Ic})(\tfrac\cdot\e)\\
&&\hspace{1.4cm}+\kappa\big(2\phi_{\chi_\delta\ast\D(u_\e)}\otimes_s-\Id\otimes\phi_{\chi_\delta\ast\D(u_\e)}-\theta_{\chi_\delta\ast\D(u_\e)}+\gamma_{\chi_\delta\ast\D(u_\e)}\otimes\Id\big)(\tfrac\cdot\e)\nabla\eta_\e\\
&&\hspace{1.4cm}+\kappa\big(2\df \phi_{\chi_\delta\ast\D(u_\e),E}\otimes_s-\Id\otimes\df \phi_{\chi_\delta\ast\D(u_\e),E}-\df \theta_{\chi_\delta\ast\D(u_\e),E}\\
&&\hspace{6.5cm}+\df\gamma_{\chi_\delta\ast\D(u_\e),E}\otimes\Id\big)(\tfrac\cdot\e)\,\eta_\e\nabla (\chi_\delta\ast \partial_E u_\e)
\Big)\\
&&-\kappa\e\int_U\eta_\e\nabla_i\tilde w_\e\otimes\nabla(\chi_\delta\ast\partial_Eu_\e) :(\triangle^{-1}\nabla_i\df F_{|\chi_\delta\ast\D(u_\e)})(\tfrac\cdot\e)\\
&&+\e\int_U\tilde w_\e\cdot\Big((\triangle^{-1}\nabla_j\mathds1_{\Ic})(\tfrac\cdot\e) \nabla_j(\eta_\e h)-\kappa\gamma_{\chi_\delta\ast\D(u_\e)}(\tfrac\cdot\e)\triangle\eta_\e\\
&&\hspace{1cm}-\kappa\df\gamma_{\chi_\delta\ast\D(u_\e),E}(\tfrac\cdot\e):\eta_\e\triangle(\chi_\delta\ast \partial_E u_\e)\\
&&\hspace{1cm}-\kappa(\triangle^{-1}\nabla_i\df F_{|\chi_\delta\ast\D(u_\e),E})(\tfrac\cdot\e)\nabla_i(\eta_\e\nabla(\chi_\delta\ast\partial_Eu_\e))\\
&&\hspace{1cm}+2\kappa\big(\df\theta_{\chi_\delta\ast\D(u_\e),E}-\df\gamma_{\chi_\delta\ast\D(u_\e),E}\otimes\Id\big)(\tfrac\cdot\e):\nabla(\chi_\delta\ast\partial_Eu_\e)\otimes\nabla\eta_\e\\
&&\hspace{1cm}+\kappa\big(\df^2 \theta_{\chi_\delta\ast\D(u_\e),E,E'}-\df^2\gamma_{\chi_\delta\ast\D(u_\e),E,E'}\otimes\Id-\triangle^{-1}\nabla_i\df^2 F_{|\chi_\delta\ast\D(u_\e),E,E'}\big)(\tfrac\cdot\e)\\
&&\hspace{7cm}:\eta_\e\nabla (\chi_\delta\ast \partial_{E'} u_\e)\otimes\nabla (\chi_\delta\ast \partial_E u_\e)\Big).
\end{eqnarray*}
We analyze the different terms separately in the upcoming six substeps.

\medskip
\substep{2.2} Proof that for all $K\ge1$,
\begin{multline}\label{eq:I0eps-re}
I_0^\e\,\ge\,(1-\tfrac1{2K})\int_U|\nabla w_\e|^2-\tfrac1{K}\int_U(Q_\e)^2-K\int_U|\nabla(\tilde w_\e-w_\e)|^2\\
-\e^2r_\e^{-2}CK\int_{\partial_{2r_\e}U}|(\psi,\phi,\df \phi)(\tfrac\cdot\e)|^2\big(1+|\nabla\bar u_\e|^2+|\chi_\delta\ast \D(u_\e)|^2\big)\\
-\e^2CK\int_U|(\psi,\phi,\df \phi)(\tfrac\cdot\e)|^2\big(|\nabla^2\bar u_\e|^2+|\nabla \chi_\delta \ast \D(u_\e)|^2\big).
\end{multline}
Adding and subtracting $w_\e$ to $\tilde w_\e$, we find from Young's inequality, for all $K\ge1$,
\[I_0^\e\,\ge\,(1-\tfrac1{2K})\int_U|\nabla w_\e|^2-\tfrac1{K}\int_U (Q_\e)^2-K\int_U|\nabla(\tilde w_\e-w_\e)|^2-K\int_U|\Div(w_\e)|^2.\]
Since $\Div(u_\e)=\Div(\bar u_\e)=0$, we get from~\eqref{e.div-2s2} and~\eqref{e.div-2s1},
\begin{multline}\label{eq:div-weps}
\Div(w_\e)\,=\,-\e \psi_{E} (\tfrac\cdot\e)\cdot \nabla( \eta_\e\partial_E\bar u_\e)
-\e\kappa\phi_{\chi_\delta\ast\D(u_\e)}(\tfrac\cdot\e) \cdot \nabla \eta_\e\\
-\e\kappa\df\phi_{\chi_\delta\ast\D(u_\e),E}(\tfrac\cdot\e)\cdot \eta_\e \nabla  (\chi_\delta\ast\partial_Eu_\e),
\end{multline}
and
the claim~\eqref{eq:I0eps-re} follows using the properties of $\eta_\e$.

\medskip
\substep{2.3} Proof that
\begin{multline}\label{eq:I1eps-re}
I_1^\e\,\lesssim\,\Big(\int_{\partial_{2r_\e}U}\big(1+|\nabla\bar u_\e|^2+|\chi_\delta\ast \D(u_\e)|^2\big)\Big)^\frac12\\
\times\Big(\int_{\partial_{3r_\e}U}|(1,\nabla\psi,\Sigma\mathds1_{\R^d\setminus \Ic},\nabla\phi,\Pi\mathds1_{\R^d\setminus\Ic})(\tfrac\cdot\e)|^2 [\nabla\bar u_\e]_{4\e}^2\Big)^\frac12.
\end{multline}

\medskip\noindent
First note that for all $n$ we have in $\e I_n$, since $\D(\psi_E)|_{I_n}=-E$,
\begin{eqnarray}
\D(\tilde w_\e)|_{\e I_n}
&=&-\Big(\!\D(T_1^\e[\bar u_\e])+\eta_\e\D(\psi_E)(\tfrac\cdot\e)T_0^\e[\partial_E\bar u_\e]\Big)\Big|_{\e I_n}\nonumber\\
&=&-(1-\eta_\e)|_{\e I_n}\fint_{\e(I_n+  B)}\D(\bar u_\e).\label{e.Dtilwe}
\end{eqnarray}
As by construction $J$ and $K$ are symmetric matrix fields, we may replace $\nabla \tilde w_\e$ by $\D(\tilde w_\e)$ in the definition of $I_1^\e$, and we thus find
\begin{equation*}
I_1^\e\,=\,-\sum_{n\in\Nc_\e(U)}(\eta_\e(1-\eta_\e))(\e x_n)\fint_{\e(I_n+B)}\D(\bar u_\e):\int_{\e I_n}\Big(J_E(\tfrac\cdot\e)\partial_E\bar u_\e+\kappa (K_{\chi_\delta\ast \D(u_\e)})(\tfrac\cdot\e)\Big).
\end{equation*}
By the hardcore assumption, using the properties of $\eta_\e$, and using~\eqref{eq:J_E_I_n},~\eqref{eq:K_E_I_n}, and the Lipschitz continuity of $E\mapsto f_n(E)$ (cf. Hypothesis~\ref{hyp:swim}), the claim~\eqref{eq:I1eps-re}  follows.

\medskip
\substep{2.4} Proof that
\begin{multline}\label{eq:I2eps}
I_2^\e\,\lesssim\,\Big(\int_{U}|\nabla w_\e|^2\Big)^\frac12\Big(\int_{\partial_{3r_\e}U}|\nabla\bar u_\e|^2\Big)^\frac12\\
+\Big(\int_{\partial_{3r_\e}U}|\nabla\bar u_\e|^2\Big)^\frac12\Big(\int_{\partial_{3r_\e}U}|(1,\nabla\psi,\nabla\phi)(\tfrac\cdot\e)|^2\big(1+|\nabla\bar u_\e|^2+|\chi_\delta\ast\D(u_\e)|^2\big)\Big)^\frac12\\
+\e\Big(\int_{\partial_{3r_\e}U}|\nabla\bar u_\e|^2\Big)^\frac12\Big(r_\e^{-2}\int_{\partial_{3r_\e}U}|(\psi,\phi)(\tfrac\cdot\e)|^2\big(1+|\nabla\bar u_\e|^2+|\chi_\delta\ast\D(u_\e)|^2\big)\Big)^\frac12\\
+\e \Big(\int_{\partial_{3r_\e}U}|\nabla\bar u_\e|^2\Big)^\frac12\Big(\int_{\partial_{3r_\e}U}|(\psi,\df\phi)(\tfrac\cdot\e)|^2\big(|\nabla^2\bar u_\e|^2+|\nabla\chi_\delta\ast\D(u_\e)|^2\big)\Big)^\frac12.
\end{multline}
Appealing to the boundary conditions in~\eqref{eq:Stokes}, recalling the notation~\eqref{eq:extension-fn}, and using $\int_{\e\partial I_n}\nu=0$ and $\int_{\e\partial I_n}\nu\otimes(x-\e x_n)=|\e I_n|\Id$, we have
\begin{equation*}
I_2^\e\,=\,-\sum_{n\in\Nc_\e(U)}\Big(\fint_{\e I_n}\D(\tilde w_\e)\Big):
\int_{\e\partial I_n}\sigma(u_\e,P_\e+P_\e'-P_{\e,n}'')\nu\otimes(x-\e x_n),
\end{equation*}
and thus, using~\eqref{e.Dtilwe} again,
and noting that the identities $\int_{\e\partial I_n}\nu\otimes(\cdot-\e x_n)=|\e I_n|\Id$ and $\Div(\bar u_\e)=0$ allow to remove any constant from the pressure field $P_\e$ in this expression,
\begin{equation}\label{eq:pre-I1eps-2nd}
I_2^\e\,=\,\sum_{n\in\Nc_\e(U)}(1-\eta_\e)(\e x_n)\Big(\fint_{\e (I_n+B)}\D(\bar u_\e)\Big):
\int_{\e\partial I_n}\sigma(u_\e,P_\e-P_{\e,n}''')\nu\otimes(x-\e x_n),
\end{equation}
where we have chosen $P_{\e,n}''':=\fint_{\e(I_n+B) \setminus \e I_n} P_\e$.
In order to estimate the right-hand side, we shall turn surface integrals into volume integrals, proceeding as for the construction of $K_E$ in Lemma~\ref{lem:KE}. 
More precisely, for all~$n$, we consider the following Neumann problem, 
\begin{align*}
\left\{\begin{array}{ll}
-\triangle u_\e^n+\nabla P_\e^n=f_{n,\e} \big({\chi_\delta\ast\D(u_\e)}_{\nmid \e I_n}\big) ,&\text{in $\e I_n$},\\
\Div(u_\e^n)=0,&\text{in $\e I_n$},\\
\sigma(u_\e^n,P_\e^n)\nu=\sigma(u_\e,P_\e-P_{\e,n}'''
)\nu, &\text{on $\e\partial I_n$}.
\end{array}\right.
\end{align*}
As for~\eqref{eq:Neumann_phi}, we can show that there is a unique solution $u_\e^n\in H^1_0(\e I_n)^d$ with $\fint_{\e I_n}u_\e^n=0$ and~$\fint_{\e I_n}\nabla u_\e^n\in\Md_0^\Sym$, and a unique pressure $P_\e^n\in\Ld^2(\e I_n)/\R$, such that
\begin{equation}\label{eq:bnd-ueps-n-extension}
\|(\nabla u_\e^n,P_\e^n)\|_{\Ld^2(\e I_n)}\lesssim\|\!\D(u_\e)\|_{\Ld^2(\e(I_n+B))}+\big\|f_{n,\e} \big({\chi_\delta\ast\D(u_\e)}_{\nmid \e I_n}\big)\big\|_{\Ld^2(\e(I_n+B))}.
\end{equation}
In these terms, we can reformulate the surface integral in~\eqref{eq:pre-I1eps-2nd} as follows,
\begin{multline*}
\int_{\e\partial I_n}\sigma(u_\e,P_\e)\nu\otimes(x-\e x_n)
\,=\,\int_{\e I_n}\nabla_j\Big(\sigma(u_\e^n,P_\e^n)\ee_j\otimes(x-\e x_n)\Big)\\
\,=\,-\int_{\e I_n}f_{n,\e}\big(\chi_\delta\ast\D(u_\e)_{\nmid \e I_n}\big)\otimes(x-\e x_n)
+\int_{\e I_n}\sigma(u_\e^n,P_\e^n),
\end{multline*}
and thus, in view of~\eqref{eq:bnd-ueps-n-extension},
\begin{multline*}
\bigg|\int_{\e\partial I_n}\sigma(u_\e,P_\e)\nu\otimes(x-\e x_n)\bigg|\\
\,\lesssim\,
|\e I_n|^{\frac12}\Big(|\e I_n|^{\frac12}
+\|\!\D(u_\e)\|_{\Ld^2(\e(I_n+B))}
+\|\chi_\delta\ast\D(u_\e)\|_{\Ld^2(\e I_n)}\Big).
\end{multline*}
Inserting this into~\eqref{eq:pre-I1eps-2nd} and using the properties of $\eta_\e$, we are led to
\begin{equation*}
I_2^\e\,\lesssim\,\Big(\int_{\partial_{3r_\e}U}|\nabla\bar u_\e|^2\Big)^\frac12
\Big(\int_{\partial_{3r_\e}U}\big(1+|\nabla u_\e|^2+|\chi_\delta\ast\D(u_\e)|^2\big)\Big)^\frac12.
\end{equation*}
Inserting then the two-scale expansion to replace the norm of~$\nabla u_\e$ in the right-hand side,
\[u_\e\,=\,w_\e+\bar u_\e+\e\psi_E(\tfrac\cdot\e)\eta_\e\partial_E\bar u_\e+\e\kappa\phi_{\chi_\delta\ast\D( u_\e)}(\tfrac\cdot\e)\eta_\e,\]
the claim~\eqref{eq:I2eps} follows.

\medskip
\substep{2.5} Proof that
\begin{multline}\label{eq:I3eps}
|I_3^\e|\,\lesssim\,\e\Big(\int_{U}|\nabla\tilde w_\e|^2\Big)^\frac12\Big(\int_{U}|\nabla(\chi_\delta\ast\D(u_\e))|^4+|\nabla^2(\chi_\delta\ast\D(u_\e))|^2\\
+|\nabla(\chi_\delta\ast \D(u_\e))|^2\big(1+[\chi_\delta\ast\D(u_\e)]_{4\e}^4\big)\Big)^\frac12.
\end{multline}
We start by decomposing
\begin{multline}\label{eq:decomp-I3eps}
I_3^\e\,=\,-\tfrac\kappa\e\sum_{n\in\Nc_\e(U)}\int_{\e(I_n+B)}\Big(\eta_\e\tilde w_\e-\fint_{\e I_n}\eta_\e\tilde w_\e\Big)\cdot\Big(f_{n,\e}(\chi_\delta\ast\D(u_\e))-f_{n,\e} \big({\chi_\delta\ast\D(u_\e)}_{\nmid \e I_n}\big)\Big)\\
-\tfrac\kappa\e\sum_{n\in\Nc_\e(U)}\Big(\fint_{\e I_n}\eta_\e\tilde w_\e\Big)\cdot\int_{\e(I_n+B)}\Big(f_{n,\e}(\chi_\delta\ast\D(u_\e))-f_{n,\e} \big({\chi_\delta\ast\D(u_\e)}_{\nmid \e I_n}\big)\Big)\\
-\kappa\int_{U}\eta_\e\tilde w_\e\otimes\nabla(\chi_\delta\ast\partial_E u_\e):\df F_{\chi_\delta\ast\D(u_\e),E}(\tfrac\cdot\e).
\end{multline}
We now  estimate the first right-hand side term.
Using the Lipschitz regularity of~$f_{n,\e}$, applying Poincaré's inequality, and using the hardcore assumption, we get
\begin{multline*}
\bigg|\tfrac1\e\sum_{n\in\Nc_\e(U)}\int_{\e(I_n+B)}\Big(\eta_\e\tilde w_\e-\fint_{\e I_n}\eta_\e\tilde w_\e\Big)\cdot\Big(f_{n,\e}(\chi_\delta\ast\D(u_\e))-f_{n,\e} \big({\chi_\delta\ast\D(u_\e)}_{\nmid \e I_n}\big)\Big)\bigg|\\
\,\lesssim\,\e\Big(\int_{U}|\nabla(\eta_\e\tilde w_\e)|^2\Big)^\frac12\Big(\int_U|\nabla(\chi_\delta\ast\D(u_\e))|^2\Big)^\frac12.
\end{multline*}
We further appeal to the following version of Poincaré's inequality for~$\tilde w_\e\in H^1_0(U)^d$ in~$\partial_{2r_\e}U$ (this will be used several times in the proof): by the properties of $\eta_\e$,
\begin{equation}\label{e.good-Poinca-bord}
\int_U |\nabla \eta_\e|^2 \tilde w_\e^2 \,\lesssim\, r_\e^{-2} \int_{\partial_{2r_\e} U} \tilde w_\e^2 \,\lesssim \int_U |\nabla\tilde w_\e|^2.
\end{equation}
The above then becomes
\begin{multline}\label{eq:decomp-I3eps-first}
\bigg|\tfrac1\e\sum_{n\in\Nc_\e(U)}\int_{\e(I_n+B)}\Big(\eta_\e\tilde w_\e-\fint_{\e I_n}\eta_\e\tilde w_\e\Big)\cdot\Big(f_{n,\e}(\chi_\delta\ast\D(u_\e))-f_{n,\e} \big({\chi_\delta\ast\D(u_\e)}_{\nmid \e I_n}\big)\Big)\bigg|\\
\,\lesssim\,\e\Big(\int_{U}|\nabla\tilde w_\e|^2\Big)^\frac12\Big(\int_U|\nabla(\chi_\delta\ast\D(u_\e))|^2\Big)^\frac12.
\end{multline}
We turn to the analysis of the last two terms in~\eqref{eq:decomp-I3eps}. By a Taylor expansion (of the form $|u(a+b)-u(a)-cu'(a)|\lesssim b^2 \sup |u''|+|b-c||u'(a)|$), using the $C^2$ regularity of $f_{n,\e}$, we can estimate
\begin{multline*}
\bigg|\int_{\e(I_n+B)}\Big(f_{n,\e}(\chi_\delta\ast\D(u_\e))-f_{n,\e} \big({\chi_\delta\ast\D(u_\e)}_{\nmid \e I_n}\big)\Big)\\
-\Big(\fint_{\e I_n}\nabla_i(\chi_\delta\ast\partial_E u_\e)\Big)\int_{\e(I_n+B)}(x-\e x_n)_i~\partial_E f_{n,\e}({\chi_\delta\ast\D(u_\e)}_{\nmid \e I_n})\bigg|\\
\,\lesssim\,\e^2\int_{\e(I_n+B)}|\nabla(\chi_\delta\ast\D(u_\e))|^2+\e^2\int_{\e(I_n+B)}|\nabla^2(\chi_\delta\ast\D(u_\e))|.
\end{multline*}
Summing over $n$ and recognizing the definition of $\df F$, cf.~\eqref{eq:def-corF},
\[\df F_{E,E'}\,:=\,-\sum_n\tfrac{\mathds1_{I_n}}{|I_n|}\int_{I_n+B}\partial_{E'} f_n(E)\otimes(x-x_n),\]
we are led to 
\begin{multline*}
\bigg|\tfrac1\e\sum_{n\in\Nc_\e(U)}\Big(\fint_{\e I_n}\eta_\e\tilde w_\e\Big)\cdot\int_{\e(I_n+B)}\Big(f_{n,\e}(\chi_\delta\ast\D(u_\e))-f_{n,\e} \big({\chi_\delta\ast\D(u_\e)}_{\nmid \e I_n}\big)\Big)\\
+\int_{U}\eta_\e\tilde w_\e\otimes\nabla(\chi_\delta\ast\partial_E u_\e):\df F_{\chi_\delta\ast\D(u_\e), E}(\tfrac\cdot\e)\bigg|\\
\,\lesssim\,\e\Big(\int_{U}|\eta_\e\tilde w_\e|^2\Big)^\frac12\Big(\int_{U}|\nabla(\chi_\delta\ast\D(u_\e))|^4+|\nabla^2(\chi_\delta\ast\D(u_\e))|^2\Big)^\frac12\\
+\e\Big(\int_{U}|\nabla(\eta_\e\tilde w_\e)|^2\Big)^\frac12\Big(\int_{U}|\nabla(\chi_\delta\ast\D(u_\e))|^2\big(1+[\chi_\delta\ast\D(u_\e)]_{4\e}^4\big)\Big)^\frac12.
\end{multline*}
Further appealing to Poincaré's inequality and to~\eqref{e.good-Poinca-bord}, this becomes
\begin{multline*}
\bigg|\tfrac1\e\sum_{n\in\Nc_\e(U)}\Big(\fint_{\e I_n}\eta_\e\tilde w_\e\Big)\cdot\int_{\e(I_n+B)}\Big(f_{n,\e}(\chi_\delta\ast\D(u_\e))-f_{n,\e} \big({\chi_\delta\ast\D(u_\e)}_{\nmid \e I_n}\big)\Big)\\
+\int_{U}\eta_\e\tilde w_\e\otimes\nabla(\chi_\delta\ast\partial_E u_\e):\df F_{\chi_\delta\ast\D(u_\e), E}(\tfrac\cdot\e)\bigg|\\
\,\lesssim\,\e\Big(\int_{U}|\nabla\tilde w_\e|^2\Big)^\frac12\Big(\int_{U}|\nabla(\chi_\delta\ast\D(u_\e))|^4+|\nabla^2(\chi_\delta\ast\D(u_\e))|^2\\
+|\nabla(\chi_\delta\ast\D(u_\e))|^2\big(1+[\chi_\delta\ast\D(u_\e)]_{4\e}^4\big)\Big)^\frac12.
\end{multline*}
Combining this with~\eqref{eq:decomp-I3eps} and~\eqref{eq:decomp-I3eps-first}, the claim~\eqref{eq:I3eps} follows.

\medskip
\substep{2.6}
Proof that
\begin{eqnarray}
I_4^\e&\lesssim&
\Big(\int_{U}|\nabla \tilde w_\e|^2\Big)^\frac12\Big(\int_{\partial_{3r_\e}U}\big(1+|h|^2+|\nabla\bar u_\e|^2+|\chi_\delta\ast\D(u_\e)|^2\big)\Big)^\frac12,
\label{eq:I4eps}\\
I_5^\e
&\lesssim&
\e\Big(\int_U|\nabla\tilde w_\e|^2\Big)^\frac12\Big(
r_\e^{-2}\int_{\partial_{2r_\e}U}|(\psi,\Ups,\phi,\theta,\gamma)|^2\big(1+|\nabla\bar u_\e|^2+|\chi_\delta\ast\D(u_\e)|^2\big)\Big)^\frac12\nonumber\\
&&+\,\e\Big(\int_U|\nabla\tilde w_\e|^2\Big)^\frac12\Big(\int_U|(\psi,\Ups,\df\phi,\df\theta,\df\gamma,\triangle^{-1}\nabla\mathds1_{\Ic},\triangle^{-1}\nabla\df F)|^2\nonumber\\
&&\hspace{4,5cm}\times\big(|\langle\nabla\rangle h|^2+|\nabla^2\bar u_\e|^2+|\nabla\langle\nabla\rangle \chi_\delta\ast \D(u_\e)|^2\big)\Big)^\frac12\nonumber\\
&&+\,\e\Big(\int_U|\nabla\tilde w_\e|^2\Big)^\frac12\Big(\int_{U}|(\df^2\theta,\df^2\gamma,\triangle^{-1}\nabla\df^2F)(\tfrac\cdot\e)|^2|\nabla\chi_\delta\ast \D(u_\e)|^4\Big)^\frac12.\label{eq:I5eps}
\end{eqnarray}
Using properties of $\eta_\e$, the neutrality condition~\eqref{eq:neutrality}, and Poincaré's inequality, a direct estimate yields
\begin{equation*}
I_4^\e\,\lesssim\,
\Big(\int_{\partial_{3r_\e}U}\big(1+|h|^2+|\nabla\bar u_\e|^2+|\chi_\delta\ast\D(u_\e)|^2\big)\Big)^\frac12\Big(\int_{U}|\nabla \tilde w_\e|^2+\int_{U}|\nabla\eta_\e|^2|\tilde w_\e|^2\Big)^\frac12,
\end{equation*}
and the claimed estimate~\eqref{eq:I4eps} follows by applying~\eqref{e.good-Poinca-bord} again.
The bound~\eqref{eq:I5eps} on $I_5^\e$ is obtained by similar straightforward computations.

\medskip
\substep{2.7} Proof of~\eqref{eq:est-main}.\\
Starting from~\eqref{eq:decomp-I0-3}, combining estimates~\eqref{eq:I0eps-re}, \eqref{eq:I1eps-re}, \eqref{eq:I2eps}, \eqref{eq:I3eps}, \eqref{eq:I4eps}, and~\eqref{eq:I5eps}, adding and subtracting~$w_\e$ to~$\tilde w_\e$ in the right-hand side,
and applying Young's inequality, we get after straightforward simplifications, for all~$K\ge1$,
\begin{multline}\label{eq:pre-bndweps}
\int_U|\nabla w_\e|^2\,\lesssim\,\tfrac1{K}\int_U(Q_\e)^2+K\int_U|\nabla(\tilde w_\e-w_\e)|^2\\
+\int_{\partial_{3r_\e}U}|(1,\nabla\psi,\Sigma\mathds1_{\R^d\setminus \Ic},\nabla\phi,\Pi\mathds1_{\R^d\setminus\Ic})(\tfrac\cdot\e)|^2\big(1+|h|^2+|\nabla\bar u_\e|^2+[\nabla\bar u_\e]_{4\e}^2+|\chi_\delta\ast\D(u_\e)|^2\big)\\
+\e^2r_\e^{-2}K\int_{\partial_{3r_\e}U}|(\psi,\Ups,\phi,\theta,\gamma,\df \phi)(\tfrac\cdot\e)|^2\big(1+|\nabla\bar u_\e|^2+|\chi_\delta\ast \D(u_\e)|^2\big)\\
+\e^2K\int_U|(1,\psi,\Ups,\phi,\df \phi,\df\theta,\df\gamma,\df^2\theta,\df^2\gamma,\triangle^{-1}\nabla\mathds1_{\Ic},\triangle^{-1}\nabla\df F,\triangle^{-1}\nabla\df^2 F)(\tfrac\cdot\e)|^2\\
\times\Big(|\langle\nabla\rangle h|^2+|\nabla^2\bar u_\e|^2+|\nabla\langle\nabla\rangle \chi_\delta\ast \D(u_\e)|^2+|\nabla\chi_\delta\ast \D(u_\e)|^4\\
+|\nabla(\chi_\delta\ast\D (u_\e))|^2\big(1+[\chi_\delta\ast\D(u_\e)]_{4\e}^4\big)\Big).
\end{multline}
It remains to evaluate the norm of $\nabla(\tilde w_\e-w_\e)$. By definition of $\tilde w_\e$, we have
\begin{multline*}
\nabla(\tilde w_\e-w_\e)\,=\,\nabla(\bar u_\e-T_1^\e[\bar u_\e])+\nabla\big(\e\psi_E(\tfrac\cdot\e)\eta_\e(\partial_E\bar u_\e-T_0^\e[\partial_E\bar u_\e])\big)\\
+\nabla\big(\e\kappa\phi_{\chi_\delta\ast\D(u_\e)}(\tfrac\cdot\e)\eta_\e-\e\kappa\phi_{T_0^\e[\chi_\delta\ast \D( u_\e)]}(\tfrac\cdot\e)\eta_\e\big),
\end{multline*}
and thus, inserting the definition of the truncation operators $T_0^\e,T_1^\e$, and noting that $\eta_\e$ is constant in the support of the cut-off functions  $\{\rho_\e^n\}_n$ by definition, 
\begin{multline*}
\nabla(\tilde w_\e-w_\e)\,=\,\sum_n\rho_n^\e\Big(\nabla\bar u_\e-\fint_{\e(I_n+ B)}\nabla\bar u_\e\Big)
+\sum_n\nabla\psi_E(\tfrac\cdot\e)\eta_\e\rho_n^\e\Big(\partial_E\bar u_\e-\fint_{\e(I_n+ B)}\partial_E\bar u_\e\Big)
\\
+\kappa\sum_n\big(\nabla\phi_{|\chi_\delta\ast\D( u_\e)}(\tfrac\cdot\e)-\nabla\phi_{\chi_\delta\ast\D( u_\e)_{\nmid \e(I_n+B)}}(\tfrac\cdot\e)\big) \eta_\e\rho_n^\e
\\
+\sum_n\e\psi_E(\tfrac\cdot\e)\eta_\e\rho_n^\e\nabla\partial_E\bar u_\e
+\sum_n\e \kappa \df\phi_{\chi_\delta\ast\D(u_\e),E}(\tfrac\cdot\e) \otimes \eta_\e\rho_n^\e\nabla (\chi_\delta\ast\partial_Eu_\e) 
\\
+\sum_n\bigg(\bar u_\e-\Big(\fint_{\e(I_n+ B)}\bar u_\e\Big)-\Big(\fint_{\e(I_n+ B)}\nabla\bar u_\e\Big)(x-\e x_n)\bigg)\otimes\nabla\rho_n^\e\\
+\sum_n\e\psi_E(\tfrac\cdot\e)\otimes\eta_\e\nabla\rho_n^\e\Big(\partial_E\bar u_\e-\fint_{\e(I_n+ B)}\partial_E\bar u_\e\Big)
\\
+\sum_n\e \kappa\big(\phi_{\chi_\delta\ast\D( u_\e)}(\tfrac\cdot\e)-\phi_{\chi_\delta\ast\D( u_\e)_{\nmid \e(I_n+B)}}(\tfrac\cdot\e)\big)\otimes\eta_\e\nabla\rho_n^\e.
\end{multline*}
Using that
\begin{multline*}
{\int_{\e(I_n+B)}\big|\phi_{\chi_\delta\ast\D( u_\e)}(\tfrac\cdot\e)-\phi_{\chi_\delta\ast\D( u_\e)_{\nmid \e(I_n+B)}}(\tfrac\cdot\e)\big|^2}\\
\,\lesssim\,\e^2\int_{\e(I_n+B)} |\df \phi(\tfrac\cdot\e)|^2\Big|\chi_\delta\ast\D(u_\e)-\fint_{\e(I_n+B)}\chi_\delta\ast\D(u_\e)\Big|^2,
\end{multline*}
and
\begin{multline*}
{\int_{\e(I_n+B)}\big|\nabla\phi_{|\chi_\delta\ast\D( u_\e)}(\tfrac\cdot\e)-\nabla \phi_{\chi_\delta\ast\D( u_\e)_{\nmid \e(I_n+B)}}(\tfrac\cdot\e)\big|^2}\\
\,\lesssim\,\e^2\int_{\e(I_n+B)} |(\nabla \df \phi)(\tfrac\cdot\e)|^2\Big|\chi_\delta\ast\D( u_\e)-\fint_{\e(I_n+B)}\chi_\delta\ast\D( u_\e)\Big|^2,
\end{multline*}
a direct computation then leads us to
\begin{multline}\label{eq:weps-tildeweps-estim}
\int_U|\nabla(\tilde w_\e-w_\e)|^2\,\lesssim\,
\e^2\int_{U}|(1,\psi)(\tfrac\cdot\e)|^2|\nabla^2\bar u_\e|^2
+\e^2 \int_{U}|\df\phi(\tfrac\cdot\e)|^2|\nabla\chi_\delta\ast\D(u_\e)|^2
\\
+\sum_n\int_{\e(I_n+B)}|(\psi,\nabla\psi)(\tfrac\cdot\e)|^2\Big|\nabla\bar u_\e-\fint_{\e(I_n+B)}\nabla\bar u_\e\Big|^2
\\
+\sum_n\int_{\e(I_n+B)}|(\df\phi,\nabla\df\phi)(\tfrac\cdot\e)|^2\Big|\chi_\delta\ast\D( u_\e)-\fint_{\e(I_n+B)}\chi_\delta\ast\D( u_\e)\Big|^2.
\end{multline}
Inserting this into~\eqref{eq:pre-bndweps}, the conclusion~\eqref{eq:est-main} follows.

\medskip
\step3 Pressure estimate for~\eqref{eq:eqn-wQ}:
\begin{multline}\label{eq:est-main-pres-simpl}
\int_U(Q_\e)^2\,\lesssim\,\int_U|\nabla w_\e|^2
+r_\e\Big(1+\|(h,\nabla\bar u_\e,\chi_\delta\ast\D(u_\e))\|_{\Ld^\infty(U)}^2\Big)\\
+\e^2\Big(1+\|(h,\nabla\bar u_\e,\bar P_\e)\|_{W^{1,\infty}(U)}^2+\|\chi_\delta\ast\D(u_\e)\|_{W^{2,\infty}(U)}^6\Big)\\
\times\int_U\big(1+r_\e^{-2}\mathds1_{\partial_{3r_\e}U}\big)
\big|\big(1,\psi,\Ups,\Sigma\mathds1_{\R^d\setminus\Ic},\phi,\theta,\gamma,\df\phi,\df\theta,\df\Pi\mathds1_{\R^d\setminus\Ic},\df\gamma,\df^2\theta,\df^2\gamma,\\
\triangle^{-1}\nabla\mathds1_\Ic,\triangle^{-1}\nabla\df F,\triangle^{-1}\nabla\df^2F\big)(\tfrac\cdot\e)\big|^2,
\end{multline}
which follows from 
\begin{multline}
\int_U(Q_\e)^2\,\lesssim\,
\int_U|\nabla w_\e|^2
+\int_{\partial_{3r_\e}U}\big(1+|h|^2+|\nabla\bar u_\e|^2+|\chi_\delta\ast\D(u_\e)|^2\big)\\
+\e^2r_\e^{-2}\int_{\partial_{3r_\e}U}|(\psi,\Ups,\phi,\theta,\gamma,\df\theta,\df\gamma)(\tfrac\cdot\e)|^2
\big(1+|\nabla\bar u_\e|^2+|\chi_\delta\ast\D(u_\e)|^2\big)\\
+\e^2\int_U|(1,\psi,\Ups,\df\phi,\df\theta,\df\gamma,\df^2\theta,\df^2\gamma,\triangle^{-1}\nabla\mathds1_\Ic,\triangle^{-1}\nabla\df F,\triangle^{-1}\nabla\df^2F)(\tfrac\cdot\e)|^2\\
\hspace{1cm}\times\Big(|\langle\nabla\rangle h|^2+|\nabla^2\bar u_\e|^2+|\nabla\bar P_\e|^2+|\nabla\langle\nabla\rangle\chi_\delta\ast\D(u_\e)|^2+|\nabla\chi_\delta\ast\D(u_\e)|^4\\
\hspace{7cm}+|\nabla(\chi_\delta\ast\D(u_\e))|^2\big(1+[\chi_\delta\ast\D(u_\e)]_{4\e}^4\big)\Big)\\
+\sum_n\int_{\e(I_n+B)}|(\Sigma\mathds1_{\R^d\setminus\Ic})(\tfrac\cdot\e)|^2\Big|\nabla\bar u_\e-\fint_{\e(I_n+B)}\nabla\bar u_\e\Big|^2\\
+\sum_n\int_{\e(I_n+B)}|(\df\Pi\mathds1_{\R^d\setminus\Ic})(\tfrac\cdot\e)|^2\Big|\chi_\delta\ast\D(u_\e)-\fint_{\e(I_n+B)}\chi_\delta\ast\D(u_\e)\Big|^2,
\label{eq:bnd-Qeps}
\end{multline}
after taking uniform norms of $h,\bar u_\e,\chi_\delta\ast\D(u_\e)$ and using Poincar\'e's inequality in the last two summands.

\medskip\noindent
Similarly as in Step~2, we shall appeal to a truncated version of $Q_\e$,
\begin{multline*}
\tilde Q_\e\,:=\,P_\e\mathds1_{\R^d\setminus\e\Ic}+P_\e^*-T_0^\e[\bar P_\e]-\eta_\e\bb:T_0^\e[\D(\bar u_\e)]-\kappa\eta_\e \cc(T_0^\e[\chi_\delta\ast\D( u_\e)])
\\
-(\Sigma_E\mathds1_{\R^d\setminus\Ic})(\tfrac\cdot\e)\eta_\e T_0^\e[\partial_E\bar u_\e]-\kappa(\Pi_{T_0^\e[\chi_\delta\ast\D(u_\e)]}\mathds1_{\R^d\setminus\Ic})(\tfrac\cdot\e)\eta_\e,
\end{multline*}
where we recall that $P_\e^*$ stands for some locally constant pressure field, cf.~\eqref{eq:def-P*},
\[P_\e^*\,:=\,P'_{\e}\mathds1_{U\setminus\Ic_\e(U)}+\sum_{n\in\Nc_\e(U)}P''_{\e,n}\mathds1_{\e I_n},\]
and where we now choose the constants $P_\e'$ and $\{P_{\e,n}''\}_n$ in such a way that
\begin{equation}\label{eq:choice-csts-P}
\tilde Q_\e|_{\Ic_\e(U)}=0\qquad\text{and}\qquad\int_U\tilde Q_\e=0.
\end{equation}
Using the Bogovskii operator as in~\cite{DG-21a}, we can construct a vector field $S_\e\in H^1_0(U)^d$ such that $S_\e|_{\e I_n}$ is a constant for all $n\in\Nc_\e(U)$ and such that
\begin{gather}
\Div(S_\e)\,=\,-\tilde Q_\e\qquad\text{in $U$},\nonumber
\\
\int_U|\nabla S_\e|^2\,\lesssim\,\int_U|\tilde Q_\e|^2.\label{eq:def-Seps}
\end{gather}
Testing equation~\eqref{eq:eqn-wQ} with $S_\e$, using the property that $S_\e|_{\e I_n}$ is a constant for all~$n\in\Nc_\e(U)$, and using the boundary conditions in~\eqref{eq:Stokes-re}, we find
\begin{eqnarray*}
\lefteqn{\int_U\tilde Q_\e Q_\e~= ~-\int_U\nabla S_\e:\nabla w_\e
-\kappa\int_U\eta_\e S_\e\otimes\nabla(\chi_\delta\ast\partial_E u_\e):\df F_{\chi_\delta\ast\D(u_\e),E}(\tfrac\cdot\e)}\\
&&-\tfrac\kappa\e\sum_{n\in\Nc_\e(U)}\int_{\e(I_n+B)}\eta_\e S_\e\cdot\Big(f_{n,\e}(\chi_\delta\ast\D(u_\e))-f_{n,\e}\big(\chi_\delta\ast\D(u_\e)_{\nmid \e I_n}\big)\Big)\\
&&+\int_U(1-\eta_\e)(\lambda-\mathds1_{\Ic_\e(U)})S_\e\cdot h
-\kappa\int_US_\e\otimes\nabla\eta_\e:\Ff(\chi_\delta\ast\D(u_\e))\\
&&+\tfrac\kappa\e\sum_{n\in\Nc_\e(U)}\int_{\e(I_n+B)}(1-\eta_\e)S_\e\cdot f_{n,\e}\big(\chi_\delta\ast\D(u_\e)_{\nmid \e I_n}\big)\\
&&+\int_U(1-\eta_\e)\nabla S_\e:\Big(2(\Bb_\pas-\Id)\D(\bar u_\e)+2\kappa\Cc(\chi_\delta\ast\D(u_\e))+\kappa\Ff(\chi_\delta\ast\D(u_\e))\Big)\\
&&-\e\int_U\nabla S_\e:\Big(\big(2\psi_{E}\otimes_s-\Id\otimes\psi_E-\Ups_E\big)(\tfrac\cdot\e)\nabla(\eta_\e\partial_E\bar u_\e)
-\eta_\e h\otimes(\triangle^{-1}\nabla\mathds1_{\Ic})(\tfrac\cdot\e)\\
&&\hspace{1.7cm}+\kappa\big(2\phi_{\chi_\delta\ast\D(u_\e)}\otimes_s-\Id\otimes\phi_{\chi_\delta\ast\D(u_\e)}-\theta_{\chi_\delta\ast\D(u_\e)}+\gamma_{\chi_\delta\ast\D(u_\e)}\otimes\Id\big)(\tfrac\cdot\e)\nabla\eta_\e\\
&&\hspace{1.7cm}+\kappa\big(2\df \phi_{\chi_\delta\ast\D(u_\e),E}\otimes_s-\Id\otimes\df \phi_{\chi_\delta\ast\D(u_\e),E}-\df \theta_{\chi_\delta\ast\D(u_\e),E}\\
&&\hspace{7cm}+\df\gamma_{\chi_\delta\ast\D(u_\e),E}\otimes\Id\big)(\tfrac\cdot\e)\eta_\e\nabla (\chi_\delta\ast \partial_E u_\e)\Big)\\
&&-\e\int_U\eta_\e\nabla_iS_\e\otimes\nabla(\chi_\delta\ast\partial_Eu_\e):(\triangle^{-1}\nabla_i\df F_{|\chi_\delta\ast\D(u_\e),E})(\tfrac\cdot\e)\\
&&+\e\int_US_\e\cdot\Big((\triangle^{-1}\nabla_j\mathds1_\Ic)(\tfrac\cdot\e)\nabla_j(\eta_\e h)
-\kappa\gamma_{\chi_\delta\ast\D(u_\e)}(\tfrac\cdot\e)\triangle\eta_\e\\
&&\hspace{2cm}-\kappa\df\gamma_{\chi_\delta\ast\D(u_\e),E}(\tfrac\cdot\e):\eta_\e\triangle(\chi_\delta\ast\partial_Eu_\e)\\
&&\hspace{2cm}-\kappa(\triangle^{-1}\nabla_i\df F_{|\chi_\delta\ast\D(u_\e),E})(\tfrac\cdot\e)\nabla_i(\eta_\e\nabla(\chi_\delta\ast\partial_Eu_\e))\\
&&\hspace{2cm}+2\kappa\big(\df\theta_{\chi_\delta\ast\D(u_\e),E}-\df\gamma_{\chi_\delta\ast\D(u_\e),E}\otimes\Id\big)(\tfrac\cdot\e):\nabla(\chi_\delta\ast\partial_Eu_\e)\otimes\nabla\eta_\e\\
&&\hspace{2cm}+\kappa\big(\df^2 \theta_{\chi_\delta\ast\D(u_\e),E,E'}-\df^2\gamma_{\chi_\delta\ast\D(u_\e),E,E'}\otimes\Id-\triangle^{-1}\nabla\df^2F_{|\chi_\delta\ast\D(u_\e),E,E'}\big)(\tfrac\cdot\e)\\
&&\hspace{7cm}:\eta_\e\nabla (\chi_\delta\ast \partial_{E'} u_\e)\otimes\nabla (\chi_\delta\ast \partial_E u_\e)\Big).
\end{eqnarray*}
Adding and subtracting $Q_\e$ to $\tilde Q_\e$ in the left-hand side, and proceeding as in Step~2 to estimate the different contributions, we deduce
\begin{eqnarray*}
\lefteqn{\int_U(Q_\e)^2~\lesssim~\int_U(\tilde Q_\e-Q_\e)^2+\Big(\int_U|\nabla S_\e|^2\Big)^\frac12\Big(\int_U|\nabla w_\e|^2\Big)^\frac12}\\
&&+\Big(\int_U|\nabla S_\e|^2\Big)^\frac12\Big(\int_{\partial_{3r_\e}U}\big(1+|h|^2+|\nabla\bar u_\e|^2+|\chi_\delta\ast\D(u_\e)|^2\big)\Big)^\frac12\\
&&+\e\Big(\int_U|\nabla S_\e|^2\Big)^\frac12\Big(r_\e^{-2}\int_{\partial_{3r_\e}U}|(\psi,\Ups,\phi,\theta,\gamma,\df\theta,\df\gamma)|^2\big(1+|\nabla\bar u_\e|^2+|\chi_\delta\ast\D(u_\e)|^2\big)\Big)^\frac12\\
&&+\e\Big(\int_U|\nabla S_\e|^2\Big)^\frac12\bigg(\int_U|(1,\psi,\Ups,\df\phi,\df\theta,\df\gamma,\df^2\theta,\df^2\gamma,\triangle^{-1}\nabla\mathds1_\Ic,\triangle^{-1}\nabla\df F,\triangle^{-1}\nabla\df^2 F)|^2\\
&&\hspace{3cm}\times\Big(|\langle\nabla\rangle h|^2+|\nabla^2\bar u_\e|^2+|\nabla\langle\nabla\rangle\chi_\delta\ast\D(u_\e)|^2+|\nabla\chi_\delta\ast\D(u_\e)|^4\\
&&\hspace{7cm}+|\nabla(\chi_\delta\ast\D(u_\e))|^2\big(1+[\chi_\delta\ast\D(u_\e)]_{4\e}^4\big)\Big)\bigg)^\frac12.
\end{eqnarray*}
Hence, by~\eqref{eq:def-Seps} and Young's inequality,
\begin{multline}\label{eq:bnd-Qeps-pre}
{\int_U(Q_\e)^2~\lesssim~\int_U(\tilde Q_\e-Q_\e)^2+\int_U|\nabla w_\e|^2
+\int_{\partial_{3r_\e}U}\big(1+|h|^2+|\nabla\bar u_\e|^2+|\chi_\delta\ast\D(u_\e)|^2\big)}\\
+\e^2r_\e^{-2}\int_{\partial_{3r_\e}U}|(\psi,\Ups,\phi,\theta,\gamma,\df\theta,\df\gamma)|^2
\big(1+|\nabla\bar u_\e|^2+|\chi_\delta\ast\D(u_\e)|^2\big)\\
+\e^2\int_U|(1,\psi,\Ups,\df\phi,\df\theta,\df\gamma,\df^2\theta,\df^2\gamma,\triangle^{-1}\nabla\mathds1_\Ic,\triangle^{-1}\nabla\df F,\triangle^{-1}\nabla\df^2 F)|^2\\
\times\Big(|\langle\nabla\rangle h|^2+|\nabla^2\bar u_\e|^2+|\nabla\langle\nabla\rangle\chi_\delta\ast\D(u_\e)|^2+|\nabla\chi_\delta\ast\D(u_\e)|^4\\
+|\nabla(\chi_\delta\ast\D(u_\e))|^2\big(1+[\chi_\delta\ast\D(u_\e)]_{4\e}^4\big)\Big).
\end{multline}
It remains to estimate the norm of $\tilde Q_\e-Q_\e$ in the right-hand side. By definition of $\tilde Q_\e$, we have
\begin{multline*}
\tilde Q_\e-Q_\e\,=\,\bar P_\e-T_0^\e[\bar P_\e]
+\eta_\e\bb:\big(\D(\bar u_\e)-T_0^\e[\D(\bar u_\e)]\big)\\
+\kappa\eta_\e \big(\cc(\chi_\delta\ast \D(u_\e))-\cc(T_0^\e[\chi_\delta\ast\D u_\e)])\big)
+(\Sigma_E\mathds1_{\R^d\setminus\Ic})(\tfrac\cdot\e)\eta_\e(\partial_E\bar u_\e-T_0^\e[\partial_E\bar u_\e])\\
+\kappa\big(\Pi_{\chi_\delta\ast\D(u_\e)}\mathds1_{\R^d\setminus\Ic}-\Pi_{T_0^\e[\chi_\delta\ast\D(u_\e)]}\mathds1_{\R^d\setminus\Ic}\big)(\tfrac\cdot\e)\,\eta_\e,
\end{multline*}
and thus, using~\eqref{eq:Cc-cc-reg} and proceeding as for~\eqref{eq:weps-tildeweps-estim}, we get
\begin{multline}
\int_U(\tilde Q_\e-Q_\e)^2\,\lesssim\,
\e^2\int_U\big(|\nabla^2\bar u_\e|^2+|\nabla\bar P_\e|^2+|\nabla\chi_\delta\ast\D(u_\e)|^2\big)\\
+\sum_n\int_{\e(I_n+B)}|(\Sigma\mathds1_{\R^d\setminus\Ic})(\tfrac\cdot\e)|^2\Big|\nabla\bar u_\e-\fint_{\e(I_n+B)}\nabla\bar u_\e\Big|^2\\
+\sum_n\int_{\e(I_n+B)}|(\df\Pi\mathds1_{\R^d\setminus\Ic})(\tfrac\cdot\e)|^2\Big|\chi_\delta\ast\D(u_\e)-\fint_{\e(I_n+B)}\chi_\delta\ast\D(u_\e)\Big|^2.
\end{multline}
Inserting this into~\eqref{eq:bnd-Qeps-pre}, the conclusion~\eqref{eq:bnd-Qeps} follows.

\medskip
\step4 Conclusion.\\
Choosing $K\ge1$ large enough to absorb
part of the pressure into the left-hand side, the combination of~\eqref{eq:est-main-simpl} and~\eqref{eq:est-main-pres-simpl} yields
\begin{multline}\label{eq:est-main+}
\int_U|\nabla w_\e|^2+\int_U(Q_\e)^2
\,\lesssim\,
\Big(1+\|(h,\nabla\bar u_\e,\bar P_\e)\|_{W^{1,\infty}(U)}^2+\|\chi_\delta\ast\D(u_\e)\|_{W^{2,\infty}(U)}^6\Big)\\
\times\bigg(
\int_{\partial_{3r_\e}U}|(1,\nabla\psi,\Sigma\mathds1_{\R^d\setminus\Ic},\nabla\phi,\Pi\mathds1_{\R^d\setminus\Ic})(\tfrac\cdot\e)|^2\\
+\e^2\int_U\big(1+r_\e^{-2}\mathds1_{\partial_{3r_\e}U}\big)|(1,\psi,\nabla\psi,\Ups,\Sigma\mathds1_{\R^d\setminus\Ic},\phi,\theta,\gamma,\df \phi,\nabla\df \phi,\df \theta,\df\Pi\mathds1_{\R^d\setminus\Ic},\df\gamma,\\
\df^2\theta,\df^2\gamma,\triangle^{-1}\nabla\mathds1_{\Ic},\triangle^{-1}\nabla\df F,\triangle^{-1}\nabla\df^2F)(\tfrac\cdot\e)|^2\bigg).
\end{multline}
By Proposition~\ref{lem:well-posed}, we have $\|\nabla u_\e\|_{\Ld^2(U)}\lesssim1+\|h\|_{\Ld^2(U)}$, and thus for all $s>0$,
\[\|\chi_\delta\ast \D(u_\e)\|_{W^{1+s,\infty}(U)}\,\lesssim\,\|\chi_\delta\|_{H^{1+s}(\R^d)}(1+\|h\|_{\Ld^2(U)}).\]
By the regularity theory for the Stokes equation, using~\eqref{eq:Cc-cc-reg}, we then deduce that the solution $(\bar u_\e,\bar P_\e)$ of~\eqref{eq:def-barueps} satisfies for all $s>0$,
\begin{eqnarray}
\|(\nabla\bar u_\e,\bar P_\e)\|_{W^{1,\infty}(U)}&\lesssim&
\|h\|_{W^{s,\infty}(U)}+  \|\Bb_\act(\chi_\delta\ast\D(u_\e))\|_{W^{1+s,\infty}(U)}
\nonumber\\
&\lesssim&
\|h\|_{W^{s,\infty}(U)}+\|\chi_\delta\|_{H^{1+s}(\R^d)}(1+\|h\|_{\Ld^2(U)}).\label{eq:apriori-barueps}
\end{eqnarray}
Inserting these estimates into~\eqref{eq:est-main+}, we get
\begin{multline}\label{eq:est-wQ-appl}
\int_U|\nabla w_\e|^2+\int_U(Q_\e)^2\\
\,\lesssim_{\chi_\delta}\,
\big(1+\|h\|_{W^{1,\infty}(U)}^6\big)\bigg(
\int_{\partial_{3r_\e}U}\big|\big(1,\nabla\psi,\Sigma\mathds1_{\R^d\setminus\Ic},\nabla\phi,\Pi\mathds1_{\R^d\setminus\Ic}\big)(\tfrac\cdot\e)\big|^2\\
+\e^2\int_U\big(1+r_\e^{-2}\mathds1_{\partial_{3r_\e}U}\big)|(1,\psi,\nabla\psi,\Ups,\Sigma\mathds1_{\R^d\setminus\Ic},\phi,\theta,\gamma,\df \phi,\nabla\df \phi,\df \theta,\df\Pi\mathds1_{\R^d\setminus\Ic},\df\gamma,\\
\df^2\theta,\df^2\gamma,\triangle^{-1}\nabla\mathds1_{\Ic},\triangle^{-1}\nabla\df F,\triangle^{-1}\nabla\df^2F)(\tfrac\cdot\e)|^2\bigg).
\end{multline}
By the ergodic theorem and by the sublinearity of correctors, cf.~Lemmas~\ref{lem:pass-cor}, \ref{lem:Jzet}, \ref{lem:cor-act}, \ref{lem:cor-gamma}, and~\ref{lem:KE}, we have for any fixed $r>0$, almost surely,
\begin{equation}\label{eq:conv-erg-1}
\lim_{\e\downarrow0}\int_{\partial_{3r}U}\big|\big(1,\nabla\psi,\Sigma\mathds1_{\R^d\setminus\Ic},\nabla\phi,\Pi\mathds1_{\R^d\setminus\Ic}\big)(\tfrac\cdot\e)\big|^2~\lesssim~Cr,
\end{equation}
and
\begin{multline}\label{eq:conv-erg-2}
\lim_{\e\downarrow0}\e^2\int_U\big(1+r^{-2}\mathds1_{\partial_{3r}U}\big)|(1,\psi,\nabla\psi,\Ups,\Sigma\mathds1_{\R^d\setminus\Ic},\phi,\theta,\gamma,\df \phi,\nabla\df \phi,\df \theta,\df\Pi\mathds1_{\R^d\setminus\Ic},\df\gamma,\\
\df^2\theta,\df^2\gamma,\triangle^{-1}\nabla\mathds1_{\Ic},\triangle^{-1}\nabla\df F,\triangle^{-1}\nabla\df^2F)(\tfrac\cdot\e)|^2\,=\,0.
\end{multline}
Some care is however needed to prove these convergence results as we take suprema in the notation~\eqref{eq:notation-|psi|}--\eqref{eq:notation-|phi|}: while the linear dependence of $\nabla\psi_E$ on $E$ makes the suprema $|\nabla\psi|=\sup_E|E|^{-1}|\nabla\psi_E|$ trivial, the same is not true for $\nabla\phi_E$. In that case, we use the Sobolev embedding in form of
\begin{equation}\label{eq:sobolev-delta}
|(\nabla\phi)(\tfrac\cdot\e)|^2\,\le\,\sup_{|E|\le C_\delta(h)}|\nabla\phi_E(\tfrac \cdot \e)|^2\,\lesssim\,\sum_{l=0}^k\int_{|E|\le C_\delta(h)}|(\nabla\df^l\phi_E)(\tfrac\cdot\e)|^2\,dE,
\end{equation}
for some $k>\frac12\dim \Md_0^\Sym$, and thus
\[\int_{\partial_{3r}U}|(\nabla\phi)(\tfrac\cdot\e)|^2\,\lesssim\,\sum_{l=0}^k\int_{|E|\le C_\delta(h)}\Big(\int_{\partial_{3r}U}|(\nabla\df^l\phi_E)(\tfrac\cdot\e)|^2\Big)\,dE,\]
to which the ergodic theorem for (linearized) correctors $\nabla\df^l\phi$ in Lemma~\ref{lem:cor-act} can now be applied, leading to the claim~\eqref{eq:conv-erg-1}.
Similarly, writing
\begin{eqnarray*}
\e^2\int_U\big(1+r^{-2}\mathds1_{\partial_{3r}U}\big)|\phi(\tfrac\cdot\e)|^2&\le&\e^2\int_U\big(1+r^{-2}\mathds1_{\partial_{3r}U}\big)\sup_{|E|\le C_\delta(h)}|\phi_E(\tfrac\cdot\e)|^2\\
&\lesssim&\sum_{l=0}^k\int_{|E|\le C_\delta(h)}\bigg(\e^2\int_U\big(1+r^{-2}\mathds1_{\partial_{3r}U}\big)|\df^l\phi_E(\tfrac\cdot\e)|^2\bigg)\,dE,
\end{eqnarray*}
the claim~\eqref{eq:conv-erg-2} indeed follows from the sublinearity of the (linearized) correctors $\df^l\phi$ in Lemma~\ref{lem:cor-act}.

\medskip\noindent
Next, inserting~\eqref{eq:conv-erg-1}--\eqref{eq:conv-erg-2} into~\eqref{eq:est-wQ-appl} and appealing to a diagonalization argument, we conclude that there exists a (random) sequence~$r_\e\downarrow0$ such that for this choice we have, almost surely,
\[\lim_{\e\downarrow0}\Big(\int_U|\nabla w_\e|^2+\int_U(Q_\e)^2\Big)\,=\,0,\]
that is, $w_\e\to0$ in $H^1_0(U)$ and $Q_\e\to0$ in $\Ld^2(U)$.
On the other hand, note that a priori estimates~\eqref{lem:well-posed-bd} and~\eqref{eq:apriori-barueps} entail that up to an extraction we have $u_\e\cvf u_0$ and $\bar u_\e\cvf\bar u_0$ in $H^1_0(U)$, for some $u_0,\bar u_0\in H^1_0(U)^d$. Passing to the weak limit in equation~\eqref{eq:def-barueps} along this subsequence, we find that $\bar u_0$ satisfies
\begin{equation}\label{eq:pre-eqn-baru0}
-\Div(2\Bb_\pas\D(\bar u_0))+\nabla\bar P_0=(1-\lambda)h+\Div(2\kappa\Bb_\act(\chi_\delta\ast\D( u_0))).
\end{equation}
Now, by definition of the two-scale expansion error $w_\e$, cf.~\eqref{eq:def-2sc-error}, together with the sublinearity of correctors, cf.~Lemmas~\ref{lem:pass-cor} and~\ref{lem:cor-act}, the convergence $w_\e\to0$ in $H^1_0(U)$ implies $u_\e-\bar u_\e\to0$ in $\Ld^2(U)$, and thus $u_0=\bar u_0$. From~\eqref{eq:pre-eqn-baru0}, we deduce that $\bar u:=u_0=\bar u_0$ actually satisfies the homogenized equation~\eqref{eq:homog}. In view of the well-posedness for the latter, we conclude $u_\e\cvf\bar u$ in $H^1_0(U)$ independently of extractions.

\medskip\noindent
We turn to the convergence of the pressure field.
Recall that we have shown $Q_\e\to0$ in~$\Ld^2(U)$.
The a priori estimate~\eqref{eq:apriori-barueps} ensures $\bar P_\e\cvf\bar P$ in $\Ld^2(U)$, where $\bar P$ is the unique pressure field in $\Ld^2(U)/\R$ for the homogenized equation~\eqref{eq:pre-eqn-baru0}. By definition of $Q_\e$, cf.~\eqref{eq:def-2sc-error}, together with the ergodic theorem for corrector pressures, cf.~Lemmas~\ref{lem:pass-cor} and~\ref{lem:cor-act}, and with the choice~\eqref{eq:choice-csts-P} of $P_\e'$, the convergence of the pressure follows.
\qed

\subsection{Quantitative homogenization and limit $\delta\downarrow0$}
This section is devoted to the proof of Theorem~\ref{theor:loc-nonlin-diag}.
As the above proof is semi-quantitative, one can infer convergence rates provided that quantitative mixing assumptions such as Hypothesis~\ref{hyp:part-strong} are further made on the statistical ensemble of inclusions.
Quantitative rates then allow in particular to let the parameter $\delta$ tend to $0$ in a nontrivial regime.
We split the proof into three steps.

\medskip
\step1 Convergence of the homogenized equation~\eqref{eq:homog} as $\delta\downarrow0$.\\
Writing equation~\eqref{eq:homog} as
\[-\Div(2\Bb_\pas\D(\bar u_\delta))+\nabla\bar P_\delta=(1-\lambda)h+\Div(2\kappa\Bb_\act(\chi_\delta\ast\D(\bar u_\delta))),\]
and appealing to the regularity theory for the Stokes equation, the unique solution $(\bar u_\delta,\bar P_\delta)\in H^1_0(U)^d\times\Ld^2(U)/\R$ satisfies for all $0<\eta<1$,
\[\|(\nabla\bar u_\delta,\bar P_\delta)\|_{W^{1-\eta,\infty}(U)}\,\lesssim_\eta\,\|h\|_{\Ld^\infty(U)}+  \kappa\|\Bb_\act(\chi_\delta\ast\D(\bar u_\delta))\|_{W^{1-\eta,\infty}(U)}.\]
Using~\eqref{eq:Cc-cc-reg}, we deduce
\begin{eqnarray*}
\|(\nabla\bar u_\delta,\bar P_\delta)\|_{W^{1-\eta,\infty}(U)}
&\lesssim_\eta&\|h\|_{\Ld^\infty(U)}+\kappa\lambda\|\chi_\delta\ast\D(\bar u_\delta)\|_{W^{1-\eta,\infty}(U)}\\
&\lesssim_\eta&\|h\|_{\Ld^\infty(U)}+\kappa\lambda\|\nabla \bar u_\delta\|_{W^{1-\eta,\infty}(U)}.
\end{eqnarray*}
The smallness condition~\eqref{eq:cond-small-kappa} yields $\kappa\lambda\lesssim\kappa\ell^{-d}\ll1$, and we thus infer
\begin{equation*}
\|(\nabla\bar u_\delta,\bar P_\delta)\|_{W^{1-\eta,\infty}(U)}
\,\lesssim_\eta\,\|h\|_{\Ld^\infty(U)}.
\end{equation*}
Up to an extraction, this implies $(\nabla\bar u_\delta,\bar P_\delta)\to(\nabla\bar u_0,\bar P_0)$ in $\Ld^\infty(U)$,
for some limit $(\bar u_0,\bar P_0)\in H^1_0(U)^d\times\Ld^2(U)/\R$.
Passing to the limit in equation~\eqref{eq:homog}, we find that $(\bar u_0,\bar P_0)$ satisfies equation~\eqref{eq:homog-0}.
Provided that $\kappa\lambda\ll1$ is small enough, which is ensured by the smallness condition~\eqref{eq:cond-small-kappa}, the well-posedness of~\eqref{eq:homog-0} follows from the same argument as for~\eqref{eq:homog}. We conclude that $(\bar u_0,\bar P_0)=(\bar u,\bar P)$ is the unique solution of~\eqref{eq:homog-0} and that $\bar u_\delta\to\bar u$ in~$W^{1,\infty}(U)$ and $\bar P_\delta\to\bar P$ in $\Ld^\infty(U)$.

\medskip\noindent
Combining this with Theorem~\ref{th:homog}, by a diagonalization argument, we deduce that there is a (random) sequence $\delta_\e^\circ\downarrow0$ such that, for any sequence $0<\delta_\e\le\delta_\e^\circ$, the solution $(u_\e,P_\e)$ of~\eqref{eq:Stokes-re} with $\delta=\delta_\e$ satisfies, as $\e\downarrow0$,
\begin{equation}\label{eq:diagonal}
\begin{array}{ccll}
u_\e&\cvf& \bar u,&\text{in $H^1_0(U)^d$},\\
P_\e\mathds1_{U\setminus\Ic_\e(U)}&\cvf&(1-\lambda)\bar P+(1-\lambda)\bb:\D(\bar u)&\\
&&\hspace{1.2cm}+(1-\lambda)\kappa\big( \cc(\D(\bar u))-\fint_U \cc(\D(\bar u))\big),\quad&\text{in $\Ld^2(U)$}.
\end{array}
\end{equation}
To improve on such a diagonal result, we need to prove a quantitative version of Theorem~\ref{th:homog} and capture the precise dependence on $\delta$. This is the purpose of the next two steps.

\medskip
\step2 Corrector estimates. \\
As we proved in~\cite{DG-21b} for passive correctors, under a quantitative mixing assumption such as Hypothesis~\ref{hyp:part-strong},
we have for all $s<\infty$,
\begin{gather}
\label{eq:def_mu_d}
\expec{|(\nabla\psi,\Sigma\mathds1_{\R^d\setminus\Ic},\nabla\Ups)(x)|^s}^\frac1s \,\lesssim_s\,1, \nonumber \\
\expec{|(\psi,\Ups)(x)|^s}^\frac1s \,\lesssim_s\, \mu_d(x):=\left\{
\begin{array}{lcl}
\log^\frac12 (2+|x|)&:&d=2,\\
1&:&d>2.
\end{array}
\right.
\end{gather}
which optimally quantifies the sublinearity of passive correctors.
The method in~\cite{DG-21b} applies mutadis mutandis  to active correctors, and yields the following: for all $E,E',E''\in\Md_0^\Sym$ and $s<\infty$, we get
\begin{align}
&\E\Big[{|(\nabla\phi_E,\Pi_E\mathds1_{\R^d\setminus\Ic},\nabla\theta_E,\nabla\df\phi_E,\nabla\df\theta_{E,E'},\nabla\df^2\phi_{E,E',E''},\nabla\df^2\theta_{E,E',E''})(x)|^s}\Big]^\frac1s \,\lesssim_{s,E,E',E''}\, 1,\nonumber\\
&\E\Big[|(\phi_E,\gamma_E,\theta_E,\df\phi_{E,E'},\df\gamma_{E,E'},\df\theta_{E,E'},\nonumber\\
&\hspace{3cm}\df^2\phi_{E,E',E''},\df^2\gamma_{E,E',E''},\df^2\theta_{E,E',E''})(x)|^s\Big]^\frac1s \,\lesssim_{s,E,E',E''}\, \mu_d(x).
\label{eq:moment-estimates}
\end{align}
Yet, for our purposes, we further need corresponding estimates on suprema such as
\[|\nabla\phi|=\sup_{|E|\le C_\delta(h)}\langle E\rangle^{-1}|\nabla\phi_E|,\]
which is not trivial due to the nonlinear dependence on $E$. As we aim at capturing the best dependence on $\delta$, we cannot appeal to brutal Sobolev estimates as in~\eqref{eq:sobolev-delta}. Instead, we shall take advantage of the above moment estimates~\eqref{eq:moment-estimates} together with Hypothesis~\ref{hyp:swim-large}.
More precisely, we decompose
\begin{equation}\label{eq:est-decomp-nabphi}
|\nabla\phi|\,\le\,\sup_{E}\,\langle E\rangle^{-1}|\nabla\phi_E^\infty|+\sup_{E}\,\langle E\rangle^{-1}|\nabla(\phi_E-\phi_E^\infty)|,
\end{equation}
where we compare $\phi_E$ to the random field $\phi_E^\infty$ that is defined via the same corrector problem~\eqref{eq:phi-cor} \&~\eqref{eq:bnd-unif-E-phiE} with the swimming forces $\{f_n(E)\}_n$ replaced by their large-$E$ approximations $\{f^\infty(E)\xi_n\}_n$, cf.~Hypothesis~\ref{hyp:swim-large}. On the one hand, as $f^\infty(E)$ is a deterministic function of $E$, the supremum of $\nabla\phi_E^\infty$ over $E$ becomes trivial and moment estimates can be established in the following form, for all~$s<\infty$,
\[\E\Big[\big(\textstyle\sup_E\,\langle E\rangle^{-1}|\nabla\phi_E^\infty|\big)^s\Big]^\frac1s\,\lesssim_s\,1.\]
On the other hand, by the Sobolev embedding, we can bound for all $s<\infty$, provided $s>\dim\Md_0^\Sym$,
\[\sup_{E}\,\langle E\rangle^{-1}|\nabla(\phi_E-\phi_E^\infty)|\,\lesssim\,\bigg(\int_{\Md_0^\Sym}\langle E\rangle^{-s}\big(|\nabla(\phi_E-\phi_E^\infty)|+|\nabla(\df\phi_E-\df\phi_E^\infty)|\big)^s\,dE\bigg)^\frac1s\]
and thus, using the proof of moment estimates~\eqref{eq:moment-estimates} together with Hypothesis~\ref{hyp:swim-large} in form of
\[\E\Big[{\big(|\nabla(\phi_E-\phi_E^\infty)|+|\nabla(\df\phi_E-\df\phi_E^\infty)|\big)^s}\Big]^{\frac1s}\,\lesssim_s\,\langle E\rangle^{1-\gamma},\]
we deduce for all $s<\infty$ with $s>(1\vee\frac1\gamma)\dim\Md_0^\Sym$,
\[\E\Big[\big({\textstyle\sup_{E}}\,\langle E\rangle^{-1}|\nabla(\phi_E-\phi_E^\infty)|\big)^s\Big]^\frac1s\,\lesssim\,\bigg(\int_{\Md_0^\Sym}\langle E\rangle^{-\gamma s}\,dE\bigg)^\frac1s\,\lesssim_s\,1.\]
Combining these bounds with~\eqref{eq:est-decomp-nabphi}, we deduce for all $s<\infty$,
\[\expec{|\nabla\phi|^s}^\frac1s\,\lesssim_s\,1,\]
uniformly with respect to $\delta>0$. This string of arguments allows us to post-process~\eqref{eq:moment-estimates} into
\begin{eqnarray*}
\E\Big[{|(\nabla\phi,\Pi\mathds1_{\R^d\setminus\Ic},\nabla\theta,\nabla\df\phi,\nabla\df\theta,\nabla\df^2\phi,\nabla\df^2\theta)(x)|^s}\Big]^\frac1s&\lesssim_s&1,\\
\E\Big[|(\phi,\gamma,\theta,\df\phi,\df\gamma,\df\theta,\df^2\phi,\df^2\gamma,\df^2\theta)(x)|^s\Big]^\frac1s&\lesssim_s&\mu_d(x).
\end{eqnarray*}

\medskip
\step3 Conclusion.\\
In order to capture the best dependence on $\delta$, we need a version of~\eqref{eq:est-main+} where the dependence on $\bar u_\e,\chi_\delta\ast\D(u_\e)$ does not deteriorate in terms of uniform norms.
Rather combining~\eqref{eq:est-main} and~\eqref{eq:bnd-Qeps}, and choosing $K\ge1$ large enough, we get
\begin{multline*}
\int_U|\nabla w_\e|^2+\int_U(Q_\e)^2\\
\,\lesssim\,\int_{\partial_{3r_\e}U}|(1,\nabla\psi,\Sigma\mathds1_{\R^d\setminus \Ic},\nabla\phi,\Pi\mathds1_{\R^d\setminus\Ic})(\tfrac\cdot\e)|^2\big(1+|h|^2+|\nabla\bar u_\e|^2+[\nabla\bar u_\e]_{4\e}^2+|\chi_\delta\ast\D(u_\e)|^2\big)\\
+\e^2r_\e^{-2}\int_{\partial_{3r_\e}U}|(\psi,\Ups,\phi,\theta,\gamma,\df \phi,\df\theta,\df\gamma)(\tfrac\cdot\e)|^2\big(1+|\nabla\bar u_\e|^2+|\chi_\delta\ast \D(u_\e)|^2\big)\\
+\e^2\int_U|(1,\psi,\Ups,\phi,\df \phi,\df\theta,\df\gamma,\df^2\theta,\df^2\gamma,\triangle^{-1}\nabla\mathds1_{\Ic},\triangle^{-1}\nabla\df F,\triangle^{-1}\nabla\df^2 F)(\tfrac\cdot\e)|^2\\
\hspace{1cm}\times\Big(|\langle\nabla\rangle h|^2+|\nabla^2\bar u_\e|^2+|\nabla\bar P_\e|^2+|\nabla\langle\nabla\rangle \chi_\delta\ast \D(u_\e)|^2+|\nabla\chi_\delta\ast \D(u_\e)|^4\\
\hspace{7cm}+|\nabla(\chi_\delta\ast D( u_\e))|^2\big(1+[\chi_\delta\ast\D(u_\e)]_{4\e}^4\big)\Big)\\
+\sum_n\int_{\e(I_n+B)}|(\psi,\nabla\psi,\Sigma\mathds1_{\R^d\setminus\Ic})(\tfrac\cdot\e)|^2\Big|\nabla\bar u_\e-\fint_{\e(I_n+B)}\nabla\bar u_\e\Big|^2\\
+\sum_n\int_{\e(I_n+B)}|(\df\phi,\nabla\df\phi,\df\Pi\mathds1_{\R^d\setminus\Ic})(\tfrac\cdot\e)|^2\Big|\chi_\delta\ast\D( u_\e)-\fint_{\e(I_n+B)}\chi_\delta\ast\D( u_\e)\Big|^2.
\end{multline*}
By Proposition~\ref{lem:well-posed}, we have $\|\nabla u_\e\|_{\Ld^2(U)}\lesssim1+\|h\|_{\Ld^2(U)}$ and thus, for all $k\ge0$ and $s\ge2$,
\begin{eqnarray*}
\|\chi_\delta\ast\D(u_\e)\|_{W^{k,s}(U)}&\lesssim&\delta^{-k-d(\frac12-\frac1s)}\big(1+\|h\|_{\Ld^2(U)}\big),\\
\|\chi_\delta\ast\D(u_\e)\|_{W^{k,s}(\partial_{r_\e}U)}&\lesssim&\delta^{-k-\frac d2}\big(\delta^d\wedge r_\e\big)^\frac1s\big(1+\|h\|_{\Ld^2(U)}\big).
\end{eqnarray*}
By the regularity theory for the Stokes equation, using~\eqref{eq:Cc-cc-reg}, we then deduce that the solution $(\bar u_\e,\bar P_\e)$ of~\eqref{eq:def-barueps} satisfies for all $s\ge2$ and $\eta>0$,
\begin{eqnarray*}
\|(\nabla\bar u_\e,\bar P_\e)\|_{W^{1,s}(U)}&\lesssim&\delta^{-1-d(\frac12-\frac1s)}\big(1+\|h\|_{\Ld^{2\vee s}(U)}\big),\\
\|(\nabla\bar u_\e,\bar P_\e)\|_{\Ld^{s}(\partial_{3r_\e}U)}&\lesssim&\delta^{-\frac d2}\big(\delta^d\wedge(r_\e\delta^{-\eta})\big)^{\frac1s}\big(1+\|h\|_{\Ld^{\infty}(U)}\big).
\end{eqnarray*}
Inserting these estimates into the above, we get for all $s\ge1$,
\begin{multline*}
\int_U|\nabla w_\e|^2+\int_U(Q_\e)^2\\
\,\lesssim\,\Big(r_\e+(r_\e\delta^{-d-\eta})\wedge(r_\e\delta^{-d})^\frac1s\Big)\Big(\fint_{\partial_{3r_\e}U}|(1,\nabla\psi,\Sigma\mathds1_{\R^d\setminus \Ic},\nabla\phi,\Pi\mathds1_{\R^d\setminus\Ic})(\tfrac\cdot\e)|^{2s}\Big)^\frac1s\\
+\e^2r_\e^{-2}\Big(r_\e+(r_\e\delta^{-d-\eta})\wedge(r_\e\delta^{-d})^\frac1s\Big)\Big(\fint_{\partial_{3r_\e}U}|(\psi,\Ups,\phi,\theta,\gamma,\df \phi,\df\theta,\df\gamma)(\tfrac\cdot\e)|^{2s}\Big)^\frac1s\\
+\e^2\delta^{-2-d(2+\frac1s)}\Big(\int_U|(1,\psi,\Ups,\nabla\psi,\Sigma\mathds1_{\R^d\setminus\Ic},\phi,\df \phi,\nabla\df\phi,\df\theta,\df\Pi\mathds1_{\R^d\setminus\Ic},\df\gamma,\\
\df^2\theta,\df^2\gamma,\triangle^{-1}\nabla\mathds1_{\Ic},\triangle^{-1}\nabla\df F,\triangle^{-1}\nabla\df^2 F)(\tfrac\cdot\e)|^{2s}\Big)^\frac1s,
\end{multline*}
where the multiplicative constant depends on the $W^{1,\infty}(U)$ norm of $h$.
Hence, taking the expectation and using the corrector estimates of Step~2, we get for all $s<\infty$,
\begin{multline*}
\expecM{\Big(\int_U|\nabla w_\e|^2\Big)^s}^\frac1s+\expecM{\Big(\int_U(Q_\e)^2\Big)^s}^\frac1s\\
\,\lesssim_h\,
\big(1+\e^2\mu_d(\tfrac1\e)^2r_\e^{-2}\big)\Big(r_\e+(r_\e\delta^{-d-\eta})\wedge (r_\e\delta^{-d})^\frac1s\Big)
+\e^2\mu_d(\tfrac1\e)^2\delta^{-2-d(2+\frac1s)}
\end{multline*}
Choosing $r_\e:=\e\mu_d(\frac1\e)$, this becomes
\begin{multline*}
\expecM{\Big(\int_U|\nabla w_\e|^2\Big)^s}^\frac1s+\expecM{\Big(\int_U(Q_\e)^2\Big)^s}^\frac1s\\
\,\lesssim_h\,
\e\mu_d(\tfrac1\e)+\e\mu_d(\tfrac1\e)\delta^{-d-\eta}+(\e\mu_d(\tfrac1\e))^2\delta^{-2-d(2+\frac1s)}.
\end{multline*}
As $\e,\delta\downarrow0$ in the regime~\eqref{eq:cond-delta-small}, we thus get $w_\e\to0$ in $\Ld^s(\Omega;H^1(U))$ and $Q_\e\to0$ in $\Ld^s(\Omega;\Ld^2(U))$. Arguing as for Theorem~\ref{th:homog}, and further using the result of Step~1, the conclusion follows.
\qed

\section{Dilute expansion of the effective viscosity}\label{sec:dil}
This section is devoted to the proof of Theorem~\ref{theor:dilute}, that is, the first-order dilute expansion of the effective viscosity $\Bb_\tot$. We recall that Einstein's formula for the passive contribution was already established in~\cite{DG-20} (see also~\cite{GVH,GV-20,GVM-20,GV-Hofer-20}), in form of
\[\big|\Bb_\pas -\Id-\lambda_1\Bb_\pas^{(1)}\big|\,\lesssim\,\lambda_2|\!\log\lambda_1|,\]
and it remains to prove
\begin{equation}\label{eq:einstein-CF}
\big|\Bb_\act(E)-\lambda_1\Bb_\act^{(1)}(E)\big|\,\lesssim\,\langle E\rangle\Big(\lambda_2|\!\log\lambda_1|+(\lambda_1\lambda_2+\lambda_1\lambda_3|\!\log\lambda_1|)^\frac12\Big).
\end{equation}
We split the proof into four steps.

\medskip
\step1 Periodic approximation.\\
Define a periodized version $\Pc_L$ of the point process $\Pc=\{x_n\}_n$ on the cube $Q_L:=[-\frac L2,\frac L2)^d$,
\[\Pc_L:=\{x_n:n\in N_L\},\qquad N_L:=\{n:x_n\in Q_{L-4}\},\]
and consider the corresponding random set
\[\Ic_L:=\bigcup_{n\in N_L}I_n,\qquad I_n:=x_n+I_n^\circ.\]
For notational convenience, we choose an enumeration $\Pc_L:=\{x_{n,L}\}_n$ and we set $I_{n,L}:=x_{n,L}+I_{n,L}^\circ$. By definition, under Hypothesis~\ref{hyp:part}, for all $L$, the periodized random set \mbox{$\Ic_L+L\Z^d$} satisfies the same regularity and hardcore conditions as in Hypothesis~\ref{hyp:part}. Moreover, we emphasize the stabilization property $\Pc_L|_{Q_{L-4}}=\Pc|_{Q_{L-4}}$. Next, we define $\psi_{E;L}\in\Ld^2(\Omega;H^1_\per(Q_L)^d)$ as the unique almost sure solution of the periodic version of~\eqref{e.cor-1},
\begin{equation}\label{e.cor-1-per}
\quad\left\{\begin{array}{ll}
-\triangle\psi_{E;L}+\nabla \Sigma_{E;L}=0,&\text{in $Q_L \setminus\Ic_L$},\\
\Div(\psi_{E;L})=0,&\text{in $Q_L \setminus\Ic_L$},\\
\D(\psi_{E;L}+Ex)=0,&\text{in $\Ic_L$},\\
\int_{\partial I_{n,L}}\sigma(\psi_{E;L}+Ex,\Sigma_{E;L})\nu=0,&\forall n,\\
\int_{\partial I_{n,L}}\Theta(x-\e x_{n,L})\cdot\sigma(\psi_{E;L}+Ex,\Sigma_{E;L})\nu=0,&\forall n,\,\forall\Theta\in\Md^\Skew.
\end{array}\right.
\end{equation}
It is easily checked that the map $\Bb_\act$ defined in~\eqref{eq:def-C+} can be reformulated as
\begin{gather}
E':2\Bb_\act(E)\,=\,
\lim_{L\uparrow\infty}E':2\Bb_{\act;L}(E),\nonumber\\
E':2\Bb_{\act;L}(E):=-\expecM{L^{-d}\sum_{n\in Q_L} \int_{I_{n,L}+B} (\psi_{E';L}+E'(x-x_{n,L}))\cdot f_{n,L}(E)}.\label{eq:C-limCL}
\end{gather}
We start by decomposing
\begin{equation}\label{eq:decomp-CL-dilut}
E':2\Bb_{\act;L}(E)\,=\,
E':\Bb_{\act;L}^{(1)}(E)
+E':R_{1;L}(E)+E':R_{2;L}(E),
\end{equation}
where we have set
\[E':\Bb_{\act;L}^{(1)}(E)\,:=\,-L^{-d}\expecM{\sum_n\int_{I_{n,L}+B}( \psi_{E';L}^n+E'(x-x_{n,L}))\cdot f_{n,L}(E)},\]
and where the remainders $R_{1;L}(E),R_{2;L}(E)$ are given by
\begin{eqnarray*}
E':R_{1;L}(E)&:=&-L^{-d}\expecM{\sum_{n\ne m}\int_{I_{n,L}+B} \psi_{E';L}^m\cdot f_{n,L}(E)},\\
E':R_{2;L}(E)&:=&-L^{-d}\expecM{\sum_n \int_{I_{n,L}+B} \Big(\psi_{E';L}-\sum_m\psi_{E';L}^m\Big)\cdot f_{n,L}(E)},
\end{eqnarray*}
in terms of the solution $\psi_{E';L}^n$ of the single-particle periodized problem
\begin{equation}\label{eq:single-part-per}
\left\{\begin{array}{ll}
-\triangle\psi_{E;L}^n+\nabla \Sigma_{E;L}^n=0,&\text{in $Q_L\setminus I_{n,L}$},\\
\Div(\psi_{E;L}^n)=0,&\text{in $Q_L\setminus I_{n,L}$},\\
\D(\psi_{E;L}^n+Ex)=0,&\text{in $I_{n,L}$},\\
\fint_{\partial I_{n,L}}\sigma(\psi_{E;L}^n+Ex,\Sigma_{E;L}^n)\nu=0,&\forall n,\\
\fint_{\partial I_{n,L}}\Theta(x-\e x_{n,L})\cdot\sigma(\psi_{E;L}^n+Ex,\Sigma_{E;L}^n)\nu=0,&\forall n,\,\forall\Theta\in\Md^\Skew.
\end{array}\right.
\end{equation}

\medskip
\step2 Proof that
\begin{equation}\label{eq:bnd-R1L}
|R_{1;L}|\,\lesssim\,\lambda_2\big(|\!\log\lambda_1|+\tfrac{\log L}L\big).
\end{equation}
As by assumption the point process is independent of particles' shapes and swimming forces, we can write, in terms of the two-point intensity $g_2$, cf.~\eqref{eq:def-g2},
\begin{equation*}
E':R_{1;L}(E)\,=\,-L^{-d}\iint_{Q_{L-4}\times Q_{L-4}} \expecM{\int_{Q_L}\psi_{E';L}^\circ(\cdot+x-y)\cdot\tilde f^\circ(E)}\,g_2(x,y)\,dxdy,
\end{equation*}
where $\tilde f^\circ$ is an iid copy of $f^\circ$, hence independent of $\psi_{E';L}^\circ$.
Noting that the periodicity of~$\psi_{E';L}^\circ$ yields
\begin{equation}\label{eq:cancel}
\int_{Q_L}\Big(\int_{Q_L}\psi_{E';L}^\circ(\cdot+x-y)\cdot \tilde f^\circ(E)\Big)\,dx=0,
\end{equation}
we can replace the two-point density $g_2$ by the correlation function $h_2=g_2-\lambda_1^2$, to the effect of
\begin{multline*}
E':R_{1;L}(E)\,=\,-L^{-d}\iint_{Q_{L}\times Q_{L-4}} \expecM{ \int_{Q_L}\psi_{E';L}^\circ(\cdot+x-y)\cdot \tilde f^\circ(E)}\,h_2(x,y)\,dxdy\\
+L^{-d}\iint_{(Q_{L}\setminus Q_{L-4})\times Q_{L-4}} \expecM{ \int_{Q_L}\psi_{E';L}^\circ(\cdot+x-y)\cdot \tilde f^\circ(E)}\,g_2(x,y)\,dxdy.
\end{multline*}
The neutrality condition~\eqref{eq:neutrality} entails
\[\Big|\int_{Q_L}\psi_{E';L}^\circ(\cdot+x-y)\cdot\tilde f^\circ(E)\Big|\,\lesssim\,\langle E\rangle\Big(\int_{2B}|\nabla\psi_{E';L}^\circ(\cdot+x-y)|^2\Big)^\frac12,\]
and thus, using standard decay estimates, see e.g.~\cite[Lemma~4.2]{DG-20},
\[\Big|\int_{Q_L}\psi_{E';L}^\circ(\cdot+x-y)\cdot \tilde f^\circ(E)\Big|\,\lesssim\,\langle E\rangle\langle x-y\rangle^{-d}.\]
The above then becomes
\begin{multline*}
|R_{1;L}|\,\lesssim\,L^{-d}\iint_{Q_{L}\times Q_{L}}\langle x-y\rangle^{-d}\,|h_2(x,y)|\,dxdy\\
+L^{-d}\iint_{(Q_{L}\setminus Q_{L-4})\times Q_{L}} \langle x-y\rangle^{-d}\,g_2(x,y)\,dxdy.
\end{multline*}
The definition of two-point intensity~\eqref{eq:def-lambda2} and the decay of correlations~\eqref{eq:h2-bnd} yield
\begin{gather*}
|h_2(x',y')|\,\lesssim\,(\lambda_1^2+g_2(x,y))\wedge\langle x-y\rangle^{-\gamma},\\
g_2(x,y)\,=\,g_2(x,y)\mathds1_{|x-y|\ge2\ell},\\
\fint_{B_\ell(x)\times B_\ell(y)}g_2(x',y')\,dx'dy'\,\lesssim\,\lambda_2,
\end{gather*}
and the claim~\eqref{eq:bnd-R1L} easily follows.

\medskip
\step3 Proof that
\begin{equation}\label{eq:estim-R2L}
|R_{2;L}|\,\lesssim\,(\lambda_1)^\frac12(\lambda_2+\lambda_3|\!\log\lambda_1|)^\frac12.
\end{equation}
We start by decomposing
\begin{align}\label{eq:decomp-R2L-0}
&E':R_{2;L}(E)\,=\,-L^{-d}\expecM{\sum_n \int_{(I_{n,L}+B)\setminus I_{n,L}} \Big(\psi_{E';L}-\sum_m\psi_{E';L}^m\Big)\cdot f_{n,L}(E)}\\
&\hspace{2cm}+L^{-d}\expecM{\sum_n\int_{\partial I_{n,L}}\Big(\psi_{E';L}-\sum_m\psi_{E';L}^m\Big)\cdot\sigma(\phi_{E;L},\Pi_{E;L})\nu}\nonumber\\
&\quad-L^{-d}\expecM{\sum_n \int_{I_{n,L}+B} \Big(\psi_{E';L}-\sum_m\psi_{E';L}^m\Big)\cdot\Big( f_{n,L}(E)\mathds1_{I_{n,L}}+\delta_{\partial I_{n,L}}\sigma(\phi_{E;L},\Pi_{E;L})\nu\Big)}.\nonumber
\end{align}
The first two terms can be recovered in the weak formulation of the equation for $\phi_{E;L}$ (periodized version of~\eqref{eq:phi-cor} as in~\eqref{e.cor-1-per}), when tested with $\psi_{E';L}-\sum_m\psi_{E';L}^m$,
\begin{multline*}
-L^{-d}\expecM{\sum_n \int_{(I_{n,L}+B)\setminus I_{n,L}} \Big(\psi_{E';L}-\sum_m\psi_{E';L}^m\Big)\cdot f_{n,L}(E)}\\
+L^{-d}\expecM{\sum_n\int_{\partial I_{n,L}}\Big(\psi_{E';L}-\sum_m\psi_{E';L}^m\Big)\cdot\sigma(\phi_{E;L},\Pi_{E;L})\nu}\\
\,=\,-\int_{Q_L}\nabla\phi_{E;L}:\nabla\Big(\psi_{E';L}-\sum_m\psi_{E';L}^m\Big),
\end{multline*}
and thus, similarly testing the equation for $\psi_{E';L}-\sum_m\psi_{E';L}^m$ with $\phi_{E;L}$, using boundary conditions and the rigidity condition for $\phi_{E;L}$ in $\Ic_L$,
\begin{multline*}
-L^{-d}\expecM{\sum_n \int_{(I_{n,L}+B)\setminus I_{n,L}} \Big(\psi_{E';L}-\sum_m\psi_{E';L}^m\Big)\cdot f_{n,L}(E)}\\
+L^{-d}\expecM{\sum_n\int_{\partial I_{n,L}}\Big(\psi_{E';L}-\sum_m\psi_{E';L}^m\Big)\cdot\sigma(\phi_{E;L},\Pi_{E;L})\nu}
\,=\,0.
\end{multline*}
Next, using boundary conditions for $\phi_{E;L}$, noting that $\psi_{E';L}-\psi_{E';L}^n$ is rigid in $I_{n,L}$, the last term in~\eqref{eq:decomp-R2L-0} can be rewritten as
\begin{multline*}
-L^{-d}\expecM{\sum_n \int_{I_{n,L}+B} \Big(\psi_{E';L}-\sum_m\psi_{E';L}^m\Big)\cdot\Big( f_{n,L}(E)\mathds1_{I_{n,L}}+\delta_{\partial I_{n,L}}\sigma(\phi_{E;L},\Pi_{E;L})\nu\Big)}\\
\,=\,L^{-d}\expecM{\sum_{n\ne m} \int_{I_{n,L}+B}\psi_{E';L}^m\cdot\Big( f_{n,L}(E)\mathds1_{I_{n,L}}+\delta_{\partial I_{n,L}}\sigma(\phi_{E;L},\Pi_{E;L})\nu\Big)}.
\end{multline*}
Inserting these identities into~\eqref{eq:decomp-R2L-0}, we get
\[E':R_{2;L}(E)\,=\,L^{-d}\expecM{\sum_{n\ne m} \int_{I_{n,L}+B}\psi_{E';L}^m\cdot\Big( f_{n,L}(E)\mathds1_{I_{n,L}}+\delta_{\partial I_{n,L}}\sigma(\phi_{E;L},\Pi_{E;L})\nu\Big)}.\]
Using boundary conditions for $\phi_{E;L}$ to replace $\psi_{E';L}^m$ by $\psi_{E';L}^m-\fint_{I_{n,L}}\psi_{E';L}^m$, using the Poincaré inequality, and a trace estimate,
this can be estimated as
\begin{multline*}
|E':R_{2;L}(E)|\,\lesssim\,\expecM{L^{-d}\sum_{n}\int_{I_{n,L}+B}\Big|\sum_{m:m\ne n}\nabla\psi_{E';L}^m\Big|^2}^\frac12\\
\times\expecM{L^{-d}\sum_n\int_{I_{n,L}+B}|f_{n,L}(E)|^2+L^{-d}\sum_n\int_{I_{n,L}+B}|\nabla\phi_{E;L}|^2}^\frac12.
\end{multline*}
Taking advantage of explicit renormalizations as in~\cite[Section~4.4]{DG-20}, the first factor can easily be estimated by
\[\expecM{L^{-d}\sum_{n}\int_{I_{n,L}+B}\Big|\sum_{m:m\ne n}\nabla\psi_{E';L}^m\Big|^2}\,\lesssim\,(\lambda_2+\lambda_3|\!\log\lambda_1|)|E'|^2.\]
Further noting that
\begin{equation}\label{eq:estim-norm-fn}
\expecM{L^{-d}\sum_n\int_{I_{n,L}+B}|f_{n,L}(E)|^2}\,\lesssim\,\lambda_1\int_{2B}|f_\circ(E)|^2\,\lesssim\,\lambda_1\langle E\rangle^2,
\end{equation}
and using the hardcore condition, we deduce
\begin{equation}\label{eq:pre-estim-R2}
|R_{2;L}(E)|\,\lesssim\,(\lambda_2+\lambda_3|\!\log\lambda_1|)^\frac12\bigg(\lambda_1\langle E\rangle^2+\expecM{\fint_{Q_L}|\nabla\phi_{E;L}|^2}\bigg)^\frac12.
\end{equation}
It remains to estimate the last integral: starting from the energy identity for $\phi_{E;L}$,
\[\int_{Q_L}|\nabla\phi_{E;L}|^2=\sum_n\int_{I_{n,L}+B}\phi_{E;L}\cdot f_{n,L}(E),\]
and using boundary conditions and the Poincaré inequality to estimate the right-hand side, we find
\[\int_{Q_L}|\nabla\phi_{E;L}|^2\lesssim\bigg(\sum_n\int_{I_{n,L}+B}|f_{n,L}(E)|^2\bigg)^\frac12\bigg(\sum_n\int_{I_{n,L}+B}|\nabla\phi_{E;L}|^2\bigg)^\frac12.\]
Hence, using the hardcore condition, absorbing the last factor, taking the expectation, and combining with~\eqref{eq:estim-norm-fn}, we obtain
\[\expecM{\fint_{Q_L}|\nabla\phi_{E;L}|^2}\,\lesssim\,\expecM{L^{-d}\sum_n\int_{I_{n,L}+B}|f_{n,L}(E)|^2}\,\lesssim\lambda_1\langle E\rangle^2.\]
Inserting this into~\eqref{eq:pre-estim-R2}, the claim~\eqref{eq:estim-R2L} follows.

\medskip
\step4 Conclusion.\\
In view of Steps~1, 2 and~3, it remains to examine the limit of the main term $\Bb_{\act;L}^{(1)}$ in~\eqref{eq:decomp-CL-dilut}.
As by assumption the point process $\{x_n\}$ is independent of particles' shapes and swimming forces, we can write
\[E':\Bb_{\act;L}^{(1)}(E)\,=\,-\lambda_1 L^{-d}|Q_{L-4}|\expecM{\int_{2B} (\psi_{E';L}^\circ+E'x)\cdot f^\circ(E)},\]
and thus, as $L\uparrow\infty$,
\[\lim_{L\uparrow\infty} E':\Bb_{\act;L}^{(1)}(E)\,=\,-\lambda_1\expecM{\int_{2B} (\psi_{E'}^\circ+E'x)\cdot f^\circ(E)},\]
in terms of the solution $\psi_{E'}^\circ$ of the whole-space single-particle problem~\eqref{eq:single-part}.
In case of spherical particles, $I^\circ=B$, the latter is explicitly solvable, cf.~\cite[Section~2.1.3]{GM-11},
\[\psi_{E'}^\circ(x)\,=\,\left\{\begin{array}{lll}
-E'x&:&|x|\le1,\\
-\tfrac{d+2}2\tfrac{(x\cdot E'x)x}{|x|^{d+2}}\big(1-\tfrac1{|x|^2}\big)-\tfrac{E'x}{|x|^{d+2}}&:&|x|>1,
\end{array}\right.\]
and the conclusion follows.
\qed

\section*{Acknowledgements}
Mitia Duerinckx acknowledges financial support from F.R.S.-FNRS,
and Armand Bernou and Antoine Gloria from the European Research Council (ERC) under the European Union's Horizon 2020 research and innovation programme (Grant Agreement n$^\circ$~864066).

\bibliographystyle{plain}
\bibliography{biblio}

\end{document}